\documentclass{amsart}
\usepackage{amsmath}
\usepackage{amssymb}
\usepackage{amsthm}
\usepackage{amscd}
\usepackage[all]{xy}
\usepackage{color,mathrsfs, mathtools}
\usepackage{comment}

\newcommand{\annette}[1]{{\color{red}Annette:#1}}
\newcommand{\guido}[1]{{\color{red}Guido:#1}}
\newcommand{\DM}{\mathbf{DM}}
\newcommand{\DA}{\mathbf{DA}}
\newcommand{\MT}{\mathbf{MT}}
\newcommand{\Log}[1]{\mathcal{L}og^{(#1)}}
\newcommand{\pol}[1]{\mathrm{pol}^{(#1)}}
\newcommand{\polzwei}[1]{\overline{\cpol}^{(#1)}}
\newcommand{\cpol}{\mathrm{pol}}
\newcommand{\cpolzwei}{\overline{\mathrm{pol}}}
\newcommand{\cLog}{\mathcal{L}og}
\newcommand{\Sym}{\mathrm{Sym}}
\newcommand{\Hom}{\mathrm{Hom}}
\newcommand{\id}{\mathrm{id}}
\newcommand{\kd}{\mathrm{kd}}
\newcommand{\tensor}{\otimes}
\newcommand{\Ext}{\mathrm{Ext}}
\newcommand{\isom}{\cong}
\newcommand{\gr}{\operatorname{gr}}
\newcommand{\comp}{\operatorname{comp}}
\newcommand{\ev}{\operatorname{ev}}

\newcommand{\coker}{\operatorname{coker}}
\newcommand{\et}{\mathrm{et}}
\newcommand{\Sm}{\mathrm{Sm}}
\newcommand{\Sh}{\mathrm{Sh}}
\newcommand{\Gm}{\mathbb{G}_m}
\newcommand{\Spec}{\mathrm{Spec}}
\newcommand{\one}[1]{\mathbf{1}^{(#1)}}
\newcommand{\res}{\mathrm{res}}

\newcommand{\ul}[1]{\underline{#1}}
\newcommand{\Kh}{\mathcal{K}}
\newcommand{\Ah}{\mathcal{A}}

\newcommand{\Hh}{\sH}
\newcommand{\Th}{\mathcal{T}}
\newcommand{\Q}{\mathbb{Q}}
\newcommand{\Ql}{\mathbb{Q}_\ell}
\newcommand{\Z}{\mathbb{Z}}
\newcommand{\R}{\mathbb{R}}
\newcommand{\C}{\mathbb{C}}
\newcommand{\D}{\mathbf{D}}
\newcommand{\sF}{\mathscr{F}}
\newcommand{\sG}{\mathscr{G}}
\newcommand{\sH}{\mathscr{H}}

\newcommand{\bQ}{\mathbf{Q}}

\newcommand{\an}{{\mathrm{an}}}
\renewcommand{\phi}{\varphi}
\newcommand{\cL}{\mathcal{L}}

\theoremstyle{definition}
\newtheorem{defn}{Definition}[subsection]
\newtheorem{ex}[defn]{Example}
\newtheorem{rem}[defn]{Remark}

\theoremstyle{plain}
\newtheorem{lemma}[defn]{Lemma}
\newtheorem{prop}[defn]{Proposition}
\newtheorem{theorem}[defn]{Theorem}
\newtheorem{cor}[defn]{Corollary}

\begin{document}
\title{Polylogarithm for families of commutative group schemes}
\author{Annette Huber}\address[Huber]{Mathematisches Institut \\
Albert-Ludwigs-Universit\"at Freiburg \\ 
79104 Freiburg }
\email{annette.huber@math.uni-freiburg.de}
\author{Guido Kings}\address[Kings]{Fakult\"at f\"ur Mathematik \\
Universit\"at Regensburg\\
93040 Regensburg}
\email{guido.kings@mathematik.uni-regensburg.de}
\thanks{The research of G. Kings was supported by the DFG through SFB 1085: Higher invariants}
\begin{abstract}We generalize the definition of the polylogarithm classes
to the case of commutative group schemes, both in the sheaf theoretic
and the motivic setting. This generalizes and simplifies the existing
cases.
\end{abstract}
\maketitle
\tableofcontents
\section{Introduction}
Since its invention by Deligne, the importance of the cyclotomic polylogarithm and its elliptic analogue increased with each new aspect discovered about it. The main reason for this is the fact that the polylogarithm remains the only systematic way to construct interesting classes
in motivic cohomology and that its realizations are related to important functions like Euler's polylogarithm or real analytic Eisenstein series. Many results about special values of $L$-functions rely on the motivic classes of the polylogarithm and we just mention
the Tamagawa number conjecture for abelian number fields (\cite{HKDuke} and \cite{BurnsGreither}), for CM elliptic curves (\cite{Ki-Invent}) and modular forms (\cite{Gealy}), or Kato's work on the conjecture of Birch and Swinnerton-Dyer (\cite{Kato}).

It was already a vision of Beilinson and Levin (unpublished) that it should be possible to define  the polylogarithm for general $K(\pi,1)$-spaces, a program realized to a large extent by Wildeshaus in \cite{Wi}. There the polylogarithm was defined for extensions of abelian schemes by tori, a restriction which is unfortunate when dealing with degenerations, and the motivic construction of the polylogarithm was lacking.

In this paper we propose a new definition of the polylogarithm which works for arbitrary smooth commutative group schemes with connected fibres. This is not quite a generalization of Wildeshaus' definition (it agrees with it in some special cases, e.g. for abelian schemes), but the better functoriality properties of our definition make this look like the right construction. What is more, and highly important for  applications, we can construct a class in motivic cohomology for our polylogarithm building on the techniques and results developed in \cite{AHP} and \cite{Ki}.\\

To explain the novel features in our construction, let us briefly review the definition of the polylogarithm (as we propose it) in the sheaf theoretic setting.
Let $S$ be noetherian finite dimensional scheme, $\pi:G\to S$  a smooth commutative group scheme with connected fibres of dimension $d$.
Let 
\[
\sH:=\sH_G:=R^{2d-1}\pi_!\bQ(d)=R^{-1}\pi_!\pi^!\bQ
\]
be the first homology of the group scheme. This is the sheaf of the Tate-modules
of the fibres. The main player is the universal Kummer extension 
\[
0\to \pi^*\sH\to \Log{1}\to \bQ\to 0
\]
on $G$. Taking symmetric powers $\Log{n}:=\Sym^{n}\Log{1}$ one gets a projective system of sheaves $\cLog$. The $\Log{n}$ have obviously a filtration whose associated graded are
just the $\Sym^{n}\sH$. Moreover, $\cLog$  has the important property that for torsion sections $t:S\to G$ one has 
\[ t^*\cLog=\prod_{n=0}^\infty\Sym^n\Hh\]
(as pro-objects) which is called the splitting principle. This applies
in particular to the unit section $e$.
The pro-object $\cLog$ together with the splitting is characterized by a universal
property, which we are able to 
verify in the sheaf theoretic setting (Theorem \ref{thm:sheaf-univ-property}) and under some
more restrictive assumptions also in the motivic setting (see Theorem \ref{thm:univ-property}).

We then turn to the construction of the polylogarithm.
Let $j:U:=G\setminus e(S)\to {G}$ be the open immersion of the complement of the unit section.
The polylogarithm is a class 
\[\cpol\in \Ext^{2d-1}_{S}(\sH,R\pi_!Rj_*j^{*}\cLog(d)) \]
whose image under the residue map 
\[ \Ext^{2d-1}_{S}(\sH,R\pi_!Rj_*j^{*}\cLog(d))\xrightarrow{\res}
\Hom_S(\sH, e^*\cLog)
\]
is given by the natural inclusion $\Hh\to \prod_{n=0}^{\infty}\Sym^{n}\sH$. 
The difference of our definition to the existing ones in the literature is the use of  $R\pi_!$. In fact it is one of our main insights
that everything becomes much more natural using cohomology with compact support.

In the sheaf theoretic setting 
the existence and uniqueness of $\cpol$ follows from the
vanishing of the higher direct images of $R\pi_!\cLog$. In  the motivic setting, we cannot make the same computation. However, analyzing
the operation of multiplication by $a\in\Z$ we get a decomposition
of $R\pi_!\cLog$ into generalized eigenspaces. We get existence and a unique characterization of $\cpol$
when asking it in addition to be in the right eigenspace. 
By either approach, the classes can easily be seen to be natural with respect to both $S$ and $G$. By construction, the realization functors map
the motivic classes to the sheaf theoretic ones.

We would also like to advocate a slight variant of the above definition, which appears already in \cite{BeLe} but not so much in other literature on the polylog. For each $\Q$-valued function $\alpha$ of degree $0$ on a finite subscheme $D$ of torsion points one can define
\[
\cpol_\alpha\in\Ext^{2d-1}(\bQ,R\pi_!j_{D*}j_D^*\cLog(d)).
\]
This class has the advantage of having very good norm compatibility properties, which are useful in Iwasawa theoretic applications
(see \cite{Ki-Eisenstein}).\\

How can we have a more general motivic construction and still a simpler one? The
main reason is that by the work of Ayoub and Cisinski-Deglise the theory of triangulated motives over a general base has now been developed to a point
that makes calculations possible. One such is the computation
of motives of commutative groups schemes in  \cite{AEH}. The original constructions could only use motivic cohomology with coefficients in $\bQ(j)$. All the interesting non-constant nature of $\cLog$ had to
be encoded in complicated geometric objects. In the case of the classical
polylog, the basic object $\Log{1}$ had to be defined using relative
cohomology - forcing the use of simplicial schemes in \cite{HuWi}.
We are still missing the motivic t-structure on triangulated motives, but
in our case $[a]$-eigenspace arguments as in \cite{Ki}, which generalize \cite{BeLe},
 can be used as a replacement. Indeed, also this
part of the argument is clarified by applying it to objects rather than
cohomology groups. For a complete list of earlier results, see the discussion in
Section~\ref{sec:special_cases}.

What is missing in contrast to the cases already in the literature is
an explicit description of the monodromy matrices of $\cLog$ and $\cpol$ or the computation of other realizations.

\subsection*{Organization of the paper}
The paper starts with a section on notation; fixing the geometric situation
and also explaining the various settings we are going to work in.

Section \ref{sec:sheaflog} gives the sheaf theoretic construction
of $\cLog$, including the  formulation of the universal property.
Section \ref{sec:motlog} mimicks the construction in the motivic
setting.

From this point on, we work in parallel in the sheaf theoretic and motivic
setting. Section \ref{sec:polylog} explains the polylogarithm
extension and its properties. In Section \ref{sec:with_*} we relate
the present construction to the ones in the literature. The particularly
important case of the cyclotomic polylog is discussed in more detail.
Finally, Section \ref{proof:vanishing} provides a couple of longer, technical
proofs on properties of $\cLog$, which had been delayed for reasons 
of readability.

An appendix discusses the decomposition into
generalized eigenspaces in general $\Q$-linear triangulated categories.

 \subsection*{Acknowledgements} It should be already clear from the introduction how much we are influenced by the ideas and constructions of Beilinson-Levin and Deligne-Beilinson. It is pleasure to thank F. Ivorra and S. Pepin-Lehalleur
for discussions.

\section{Setting and preliminaries}\label{sec:setting}

\subsection{Geometric situation}\label{sec:setting_geometry}
We fix the following notation. Let $S$ be a base scheme, subject to further conditions depending on the setting. Let
\[ \pi:G\to S\]
be a smooth commutative group scheme with connected fibres of relative dimension $d$ and unit section $e:S\to G$ and multiplication $ \mu:G\times_SG\to G$.
Let $j:U\to G$ be the open complement of $e(S)$.

Let $\iota_D:D\to G$ be a closed subscheme with structural map
$\pi_D:D\to S$. Most of the time we will assume $\pi_D$ \'etale and
$D$ contained in the $N$-torsion of $G$ for some $N\geq 1$.  
Let 
$j_D:U_D=G\setminus D\to G$ be the open complement of $D$. This basic set up is summarized in the diagram
\[
\xymatrix{
U_D:=G\setminus D\ar[r]^/.6em/{j_D}\ar[rd]&G\ar[d]^{\pi}&D\ar[l]_{\iota_D}\ar[dl]^{\pi_D}\\
&S
}
\]
We will also consider morphisms $\phi:G_1\to G_2$ of $S$-group schemes as above. In this case we decorate all notation with an index $1$ or $2$, e.g., $d_1$
for the relative dimension of $G_1/S$.

\subsection{$\ell$-adic setting}\label{sec:setting_ladic}
Let $S$ be of finite type over a regular scheme of dimension $0$ or $1$.
Let $\ell$ be a prime invertible on $S$,
$X\to S$ separated and of finite type.
We work in the category of constructible $\Ql$-sheaves on $X$ in the
sense of \cite[Expos\'e V]{sga5} and its ``derived'' category
in the sense of Ekedahl \cite{Eke}. They are triangulated categories with a $t$-structure whose heart is the category of constructible $\Ql$-sheaves.
By loc. cit. Theorem 6.3 there is a full 6 functor formalism on these categories. 

\subsection{Analytic sheaves}\label{sec:setting_analytic}
Let $S$ be separated and of finite type over the complex numbers. For $X\to S$ separated and of finite type, we denote $X^\an$ the set $X(\C)$ equipped with the analytic topology. We work in the category of constructible sheaves of $\Q$-vector spaces on $X^\an$ and its derived category. There 
is a full 6 functor formalism on these categories, see e.g. \cite{di04}.

\subsection{Hodge theoretic setting}\label{sec:setting_hodge}
Let again $S$ be separated and of finite type over the complex numbers.
Let $X\to S$ be separated and of finite type. We work in the derived category of
Hodge modules on $X$ of Saito, e.g. \cite{sai88}. It has a natural forgetful functor into the derived category of constructible sheaves on $X^\an$.
By \cite[Section 4.6 Remarks 2. page 328-329]{sai90}  it also carries a $t$-structure whose heart maps to the abelian category of constructible sheaves via the forgetful functor. 
Note that this {\em not} the better known $t$-structure whose heart maps to
perverse sheaves.

\subsection{Motivic setting}\label{sec:setting_motivic}
Let $S$ be noetherian and finite dimensional. Let $X\to S$ be separated and of finite type. 

We denote $\DA(S)$ the triangulated category of \'etale motives
without transfers with {\em rational coefficients}. 

This is the same notation as in \cite{AHP}, our main reference in the sequel.
The category is denoted $\DA^\et(S,\Q)$ in the work of Ayoub \cite{Ayou7a}, \cite{Ayou7b}, \cite{ayoubet}. In the work of Cisinski and D\'eglise (see \cite[16.2.17]{CD}) it
is the category $D_{\mathbb{A}^1,\et} (\Sm/S,\Q)$. 

There is a full 6 functor formalism for these categories. In particular, for 
$f:X\to S$ smooth of fibre dimension $d$, there
is a natural object
$M_S(X)\in \DA(S)$. In formulas:
\[ M_S(X)=f_{\#}\Q_X=Rf_!\Q_X(d)[2d]=Rf_!f^!\Q_S.\]

Beside the formal properties of $\DA(S)$, we also are going to use
the existence of a convenient {\em abelian} category mapping to it.
Let $\Sh_\et(\Sm)$ be the category \'etale sheaves of $\Q$-vector spaces on the
category of smooth $S$-schemes of finite type.
Then there is a tensor functor
\[ C^b(\Sh_\et(\Sm))\to \DA(S)\]
which maps short exact sequences to exact triangles.
\begin{rem}There are a number of different triangulated categories of motives
over $S$. With integral or torsion coefficients, 
the differences between them are subtle; and comparison results like the Bloch-Kato conjecture are the
deepest results in the theory. However, the situation is much more straightforward with rational coefficients. For example, we get the same categories
when working with the Nisnevich or the \'etale topology. Under weak 
assumptions on $S$ (e.g., $S$ excellent and regular is more then enough) 
all definitions agree. In these cases, $\DA(S)$ is equivalent to the categories of motives for the qfh-topoly or for the h-topology, to triangulated motives with transfers, and to the category of Beilinson motives of Cisinski and D\'eglise 
\end{rem}

\subsection{Realizations}\label{setting_realizations}
Let $\DA_c(S)$ be the full subcategory of compact objects. 
If $\ell$ is invertible on $S$, then by \cite[Section 9]{ayoubet} there is a covariant
\'etale realization functor
\[ R_\ell:\DA_c(S)\to D_c(S,\Ql)\]
where $\DA_c(S)$ is the full subcategory of compact motives and $D_c(S,\Q_{\ell})$ is the triangulated category of the $\ell$-adic setting. The functors $R_{\ell}$ are compatible with the six functor formalism on both sides and map the Tate
motive $\Q(j)$ to $\Q_{\ell}(j)$.

If $S$ is of finite type over $\C$, then by \cite{Ay10} there is
a covariant Betti realization functor
\[ R_B:\DA_c(S)\to D_c(S^\an,\Q).\]
It is compatible with the six functor formalism on both sides and maps the
Tate motive $\Q(j)$ to $\Q$.

At the time of writing this paper, the situation for the Hodge theoretic
realization is not yet as satisfactory. By work of Drew (\cite{drew-thesis}, \cite{drew-oberwolfach}) there
is realization compatible with the 6 functor formalism into categories
which are of Hodge theoretic flavour but a priori bigger than the
derived category of Hodge modules. By work of Ivorra \cite{Ivorra_Hodge}, 
there is 
realization into Hodge modules for compact motives over a smooth base of finite type over $\C$, but without knowledge about the 6 functors.

\subsection{Notation}
The bulk of our computations will be valid in the various settings without
any changes.
We are going to refer to the $\ell$-adic, analytic or Hodge theoretic setting
by the shorthand {\em sheaf theoretic setting}. By {\em triangulated setting}
we are going to refer to computations on the level of derived categories 
in the $\ell$-adic, analytic or Hodge theoretic setting as well as in the
motivic setting. We denote them uniformly by $\D(X)$.

In any of the above sheaf theories we denote by $\bQ$ the structure
sheaf, i.e., $\Q_\ell$, $\R(0)$. In the motivic setting
we denote $\bQ$ the motive of $S$. It is defined by the image
of the constant \'etale sheaf $\Q$.

To avoid confusion, we write $Rf_*, Rf_!$ etc. for the triangulated functors instead of $f_*$ or $f_!$, which is sometimes used, in particular in \cite{AHP}.
The notation $f_*$, $f_!$ etc. is reserved for the functors between abelian categories of sheaves.

\subsection{Unipotent sheaves}\label{sec:unipotent}
Let $S$ be the base scheme and $\pi:X\to S$ separated and of finite type.

Recall that a sheaf $\sF$ on $X$ is \emph{unipotent of length $n$}, if it has a filtration
$0=\sF^{n+1}\subset \sF^n\subset \ldots\subset \sF^0=\sF$ such that 
$\sF^i/\sF^{i+1}\isom\pi^*\sG^i$ for a sheaf $\sG^i$ on $S$.

In any of the triangulated settings above, we call an object
$M\in \D(X)$ {\em unipotent}
if there is a finite sequence of objects $M_1\to M_2\to\dots M_n=M$
and exact triangles
\[ M_{i-1}\to M_{i}\to \pi_2^*N_{i}.\]
\begin{lemma}\label{lem:unipotent_uppershriek}
Let $\pi_1:X_1\to S$ and $\pi_2:X_2\to S$ be smooth of constant fibre dimension $d_1$ and $d_2$. Let $f:X_1\to X_2$ be an $S$-morphism.
Let $M\in D(X_2)$ be unipotent. Then
\[ f^!M=f^*M(d_1-d_2)[2d_1-2d_2].\]
\end{lemma}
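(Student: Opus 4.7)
The plan is to induct on the length $n$ of the unipotent filtration of $M$, with base case $n=1$, i.e., $M = \pi_2^*N$ for some $N \in \D(S)$.

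For the base case, since $\pi_1$ and $\pi_2$ are smooth of relative dimensions $d_1$ and $d_2$, the relative purity isomorphism in the six-functor formalism yields natural isomorphisms $\pi_i^! \simeq \pi_i^*(d_i)[2d_i]$. Combined with the functoriality identity $\pi_1^! = f^!\pi_2^!$ (since $\pi_1 = \pi_2\circ f$), this gives
\[
f^!\pi_2^*N \simeq f^!\pi_2^!N(-d_2)[-2d_2] = \pi_1^!N(-d_2)[-2d_2] \simeq \pi_1^*N(d_1-d_2)[2d_1-2d_2] = f^*\pi_2^*N(d_1-d_2)[2d_1-2d_2],
\]
and this chain is natural in $N$.

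For the induction step, given an exact triangle $M_{i-1} \to M_i \to \pi_2^*N_i \to$ from the unipotent filtration of $M$, I need the isomorphisms for $M_{i-1}$ (inductive hypothesis) and for $\pi_2^*N_i$ (base case) to be compatible along the morphisms of the triangle. The cleanest way is to construct a natural transformation of triangulated functors
\[
\phi: f^*(-)(d_1-d_2)[2d_1-2d_2] \Rightarrow f^!(-),
\]
which by the previous paragraph is an isomorphism on every object of the form $\pi_2^*N$. Such a $\phi$ arises from the canonical map $f^*(-)\otimes f^!\bQ_{X_2} \to f^!(-)$ of the six-functor formalism, together with the identification $f^!\bQ_{X_2} \simeq \bQ_{X_1}(d_1-d_2)[2d_1-2d_2]$, which is itself an instance of the base case applied to $N = \bQ_S$. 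Applying $\phi$ to the triangle produces a morphism of exact triangles that is an isomorphism on two of the three vertices, hence an isomorphism on the third by the five-lemma for triangulated categories.

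The main obstacle is really just a matter of book-keeping: the construction of $\phi$, its naturality, and its compatibility with exact triangles rest on standard properties of the six-functor formalism, available in each of the settings considered ($\ell$-adic, analytic, Hodge, and motivic via the work of Ayoub and Cisinski--D\'eglise). The essential mathematical input is the relative purity isomorphism for the smooth structural maps $\pi_1,\pi_2$ together with the composition identity $(\pi_2f)^! = f^!\pi_2^!$; beyond that, no substantive obstruction arises.
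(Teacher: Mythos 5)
Your proposal is correct and follows essentially the same route as the paper: the purity computation $f^!\pi_2^*N=\pi_1^!N(-d_2)[-2d_2]=f^*\pi_2^*N(d_1-d_2)[2d_1-2d_2]$ for the base case, then the canonical natural map $f^*M\otimes f^!\bQ\to f^!M$ (which the paper obtains by adjunction from the projection formula $Rf_!(f^*M\otimes f^!\bQ)=M\otimes Rf_!f^!\bQ\to M\otimes\bQ$), and finally induction on the unipotent length using exact triangles.
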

\begin{proof} Put $c=d_1-d_2$ the relative dimension of $f$.
We start with the case $M=\pi_2^*N$. 
In this case
\begin{multline*}
f^!M=f^!\pi_2^*N=f^!\pi_2^!N(-d_2)[-2d_2]=\pi_1^!N(-d_2)[-2d_2]\\
=\pi_1^*N(c)[2c]=f^*\pi_2^*N(c)[2c]
=f^*M\tensor\bQ(c)[2c].
\end{multline*}
In particular, $f^!\bQ=\bQ(c)[2c]$ and we may rewrite the 
 formula as
\[ f^*M\tensor  f^!\bQ=f^!(M\tensor \bQ).\]
There is always a map from the left to right via adjunction from
the projection formula
\[ Rf_!(f^*M\tensor f^!\bQ)= M\tensor Rf_!f^!\Q\to M\tensor \bQ.\]
Hence we can argue on the unipotent length of $M$ and it suffices
to consider the case $M=\pi^*N$. This case was settled above. \end{proof}

Let $X\to S$ be a smooth scheme with connected fibres and $e:S\to X$ a section. Homomorphisms of unipotent sheaves are completely determined by their restriction to $S$ via $e^{*}$:
\begin{lemma} \label{lemma:e-upper-star}
We work in the sheaf theoretic setting.
Let $\pi:X\to S$ be smooth with connected fibres and $e:S\to X$ a section of $\pi$ and $\sF$ a unipotent sheaf on $X$. Then
\[
e^{*}:\Hom_X(\bQ,\sF)\to \Hom_S(e^{*}\bQ,e^{*}\sF)
\]
is injective. 
\end{lemma}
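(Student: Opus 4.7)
The natural approach is induction on the unipotent length $n$ of $\sF$.

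For the base case, suppose $\sF \isom \pi^{*}\sG$ for some sheaf $\sG$ on $S$. I claim that in this case $e^{*}$ is not merely injective but in fact bijective. Indeed, by adjunction we have a pullback map
\[ \pi^{*}:\Hom_S(\bQ,\sG)\to \Hom_X(\pi^{*}\bQ,\pi^{*}\sG)=\Hom_X(\bQ,\pi^{*}\sG), \]
and the composition $e^{*}\circ \pi^{*}$ equals $(\pi\circ e)^{*}=\id$, so $e^{*}$ is a retraction of $\pi^{*}$. It suffices therefore to show that $\pi^{*}$ is surjective, which amounts to saying that the unit $\sG\to \pi_{*}\pi^{*}\sG$ is an isomorphism. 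But this follows from the assumption that $\pi$ has connected fibres: stalkwise (or rather fibrewise) one has $(\pi_{*}\pi^{*}\sG)_s=H^{0}(X_s,\sG_s\tensor_\bQ \bQ_{X_s})=\sG_s$ because $H^{0}(X_s,\bQ)=\bQ$ for the connected fibre $X_s$.

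For the inductive step, pick a short exact sequence $0\to \sF'\to \sF\to \pi^{*}\sG\to 0$ where $\sF'$ is unipotent of length $n-1$. Applying the left exact functor $\Hom_X(\bQ,-)$ gives
\[ 0\to \Hom_X(\bQ,\sF')\to \Hom_X(\bQ,\sF)\to \Hom_X(\bQ,\pi^{*}\sG). \]
Since $e$ is a section of a smooth morphism, it is a closed (regular) immersion, and $e^{*}$ is exact on sheaves; applying it to the short exact sequence above yields a short exact sequence of sheaves on $S$, and then $\Hom_S(\bQ,-)$ produces the analogous left exact sequence on the bottom. The map $e^{*}$ of Hom-groups is a morphism between these two left exact sequences. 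By the induction hypothesis the leftmost vertical map is injective, and by the base case the rightmost vertical map is an isomorphism; a diagram chase (equivalently, the five-lemma applied to the induced long exact sequences) shows that the middle $e^{*}$ is injective.

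The only real input beyond formal yoga is the calculation $\pi_{*}\pi^{*}\sG=\sG$ in the base case, so the main question to be careful about is that the assumption ``connected fibres'' is strong enough in each of the three sheaf theoretic settings (\'etale constructible $\bQ_\ell$-sheaves, analytic constructible $\Q$-sheaves, Hodge modules) to guarantee this identification; in all three cases this is standard. Note that the argument does not go through verbatim in the motivic setting because the analogue of the base case requires a suitable statement about $R\pi_*\bQ$ which we do not have at the level of an abelian category — which is exactly why the lemma is stated in the sheaf theoretic setting.
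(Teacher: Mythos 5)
Your argument is correct and its skeleton is exactly the paper's: induction on the unipotent length, comparing the two left-exact $\Hom$-sequences via the exact functor $e^*$ and chasing the diagram, with the whole content sitting in the base case $\sF=\pi^*\sG$. Where you differ is in how that base case is settled. You prove that the unit $\sG\to\pi_*\pi^*\sG$ is an isomorphism; the paper instead dualizes, rewriting $\Hom_X(\pi^*\bQ,\pi^*\sG)\isom\Hom_X(\pi^!\bQ,\pi^!\sG)\isom\Hom_S(R\pi_!\pi^!\bQ,\sG)\isom\Hom_S(H^0R\pi_!\pi^!\bQ,\sG)$ and then uses that $H^0R\pi_!\pi^!\bQ\isom\bQ$, the fibrewise degree-zero homology of the connected fibres. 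The two computations are dual and both legitimate, but note one caveat in yours: the identification $(\pi_*\pi^*\sG)_{\bar s}=H^0(X_{\bar s},\sG_{\bar s})$ is not an instance of a standard base change theorem, since $\pi$ is not proper (and smooth base change goes in the other direction). In degree zero it is nevertheless true, but it deserves a sentence of justification: for instance, a section $t$ of $\pi^*\sG$ is constant along each connected geometric fibre, on which $\pi^*\sG$ restricts to the constant sheaf, so $t-\pi^*e^*t$ has vanishing stalks and $t=\pi^*e^*t$; here the existence of the section $e$ (or the openness of $\pi$) is what saves you. This is precisely the subtlety the paper's compact-support formulation is designed to avoid: base change for $R\pi_!$ is unconditional, so the fibrewise computation of $H^0R\pi_!\pi^!\bQ$ is immediate, and the same phrasing works uniformly in the $\ell$-adic, analytic and Hodge-module settings, where stalkwise arguments for $\pi_*$ are progressively more awkward. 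Your closing remark about the motivic setting matches the paper's reason for restricting this lemma to the sheaf theoretic setting; there the eigenspace argument replaces it.
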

\begin{proof}
Let $0\to\sF_1\to\sF_2\to\sF_3\to 0$ be a short exact sequence of unipotent sheaves
on $X$. By exactness of $e^*$ and left-exactness of $\Hom$ we get a commutative
diagram of exact sequences
\[\xymatrix{%
0\ar[r]&\Hom_X(\bQ,\sF_1)\ar[r]\ar[d]&\Hom_X(\bQ,\sF_2)\ar[r]\ar[d]&\Hom_X(\bQ,\sF_3)\ar[d]\\
0\ar[r]&\Hom_S(\bQ,e^*\sF_1)\ar[r]&\Hom_S(\bQ,e^*\sF_2)\ar[r]&\Hom_S(\bQ,e^*\sF_3)
}\]
If injectivity holds for $\sF_1$ and $\sF_3$, then by a small diagram chase
it also holds for $\sF_2$. Hence by induction on the unipotent length it suffices
 to consider the case $\sF=\pi^*\sG$. We claim that we even have an isomorphism
 in this case. 
 It reads
 \[ \Hom_X(\pi^*\bQ,\pi^*\sG)
\to \Hom_S(\bQ,e^*\pi^*\sG)=\Hom_S(\bQ,\sG).\]
 As $\pi$ is smooth, the left hand side is 
 \[ \Hom_X(\pi^!\bQ,\pi^!\sG)=\Hom_S(R\pi_!\pi^!\bQ,\sG).\]
 Recall that $H^0R\pi_!\pi^!\bQ$ is fibrewise $0$-th homology of $X$. As
 we assume that $\pi$ has connected fibres, this is isomorphic to $\bQ$. 
 Hence
 \[ \Hom_S(H^0R\pi_!\pi^!\bQ,\sG)=\Hom_S(\bQ,\sG).\]
This proves the claim.
\end{proof}
\section{The logarithm sheaf}\label{sec:sheaflog}
We work in one of the sheaf theoretic settings described in Section \ref{sec:setting} and in the geometric situation described there. In particular,
$\pi:G\to S$ is a smooth commutative group scheme with connected
fibres of dimension $d$.

\subsection{Definition of the logarithm sheaf}\label{sec:log}
\begin{defn}\label{defn:tate}
For the group scheme $\pi:G\to S$ we let 
\[
\sH:=\sH_G:=R^{2d-1}\pi_!\bQ(d)=R^{-1}\pi_!\pi^!\bQ.
\]
\end{defn}
The formation of $\sH_G$ is covariant functorial for $S$-group homomorphisms
$\varphi:G_1\to G_2$. The adjunction $\varphi_!\varphi^{!}\bQ\to \bQ$ induces by applying $R\pi_{2!}$ a map of sheaves
\begin{equation}
\varphi_!:\sH_{G_1}\to \sH_{G_2}.
\end{equation}
Using the ''Leray spectral sequence'' for $R\pi_!\pi^!\bQ$ (i.e., the spectral
sequence for the canonical filtration) we get
\begin{multline*}
0\to \Ext^1_S(\bQ,\sH)\xrightarrow{\pi^!}
\Ext^1_G(\pi^!\bQ,\pi^!\sH)\to 
\Hom_S(\sH,\sH)\to\\
\to
\Ext^2_S(\bQ,\sH)\xrightarrow{\pi^!} \Ext^2_G(\pi^!\bQ,\pi^!\sH)
\end{multline*}
and the maps $\pi^{!}$ are injective because they admit
the splitting $e^{!}$ induced by the unit section $e$. This gives
\begin{equation}\label{eq:Log-1}
0\to \Ext^1_S(\bQ,\sH)\xrightarrow{\pi^!}
\Ext^1_G(\pi^!\bQ,\pi^!\sH)\to 
\Hom_S(\sH,\sH)\to 0.
\end{equation}
Note that $\Ext^1_G(\pi^!\bQ,\pi^!\sH)\isom \Ext^1_G(\bQ,\pi^*\sH)$.
\begin{defn}\label{defn:log}
The \emph{first logarithm sheaf} $(\Log{1},\mathbf{1}^{(1)})$ on $G$ consists of an extension class
\[
0\to \pi^*\sH\to \Log{1}\to \bQ\to 0
\]
such that its image in $\Hom_S(\sH,\sH)$ is the identity
together with
a fixed splitting $\mathbf{1}^{(1)}:e^*\bQ\to e^*\Log{1}$. 

We define
\[
\Log{n}:=\Sym^n\Log{1}
\]
and denote by $\mathbf{1}^{(n)}$ the induced splitting $\Sym^n(\mathbf{1}^{(1)}):\bQ\to \Log{n}$.
\end{defn}

The existence and uniqueness of $(\Log{1}_G,\mathbf{1}^{(1)})$ follow directly from \eqref{eq:Log-1}. The automorphisms of $\Log{1}$ form a torsor under $\Hom_G(\bQ,\pi^{*}\sH)$. In particular, the 
pair $(\Log{1},\mathbf{1}^{(1)})$ admits no automorphisms except the
identity.

Consider $\Log{1}\to \Log{1}\oplus \bQ$ induced by the identity and the natural
projection $\Log{1}\to \bQ$. We define transition maps 
\[
\Log{n+1}\isom \Sym^{n+1}\Log{1}\to \Sym^{n+1}(\Log{1}\oplus \bQ)\to \Sym^n\Log{1}\otimes \Sym^1\bQ\isom \Log{n},
\]
induced by the canonical projection. Under these transition maps 
$\mathbf{1}^{(n+1)}$ is mapped to $\mathbf{1}^{(n)}$ and one has an exact sequence
\[
0\to \pi^*\Sym^n\sH\to \Log{n}\to\Log{n-1}\to 0.
\]
This implies that the sheaf $\Log{n}$ is unipotent of length $n$ with associated 
graded $\bigoplus_{k=0}^n\pi^{*}\Sym^k\sH$. The section $\mathbf{1}^{(n)}$
induces an isomorphism
\begin{equation}
e^*\Log{n}\isom \prod_{k= 0}^n\Sym^k\sH.
\end{equation}
\begin{defn}\label{defn:logn}
The \emph{logarithm sheaf} $(\cLog,\mathbf{1})$ is the pro-system of $(\Log{n},\mathbf{1}^{(n)})$
with the above transition maps. The unipotent filtration is given by the kernels
of the augmentation maps
\[
\cLog\to \Log{n}.
\]
\end{defn}

For later reference, we also explain an explicit construction of $\Log{1}$
as the universal Kummer extension.
It is this point of view that will be used in the motivic case.

Note that the unit section induces an isomorphism 
$\bQ\to R^0\pi_!\pi^!\bQ$ and a
splitting 
\begin{equation}\label{eq:coh-splitting} R\pi_!\pi^!\bQ\isom \bQ\oplus \tau_{\leq -1}R\pi_!\pi^!\bQ.
\end{equation}
We apply this to the $G$-group scheme $\tilde{G}=G\times G$ with structure map
$\tilde{\pi}=\pi\times \id$. Its unit section is $\tilde{e}=e\times\id$. The
diagonal $\Delta:G\to G\times G$ is a morphism of $G$-schemes, hence $\Delta$ induces
a natural morphism of functors
\[ \id=R\tilde{\pi}_!R\Delta_!\Delta^!\tilde{\pi}^!\to R\tilde{\pi}_!\tilde{\pi}^!,\]
which we apply to $\bQ$. Together this yields a natural map in $\D(G)$
\begin{equation}\label{eq:comp}
\bQ\to R\tilde{\pi}_!\tilde{\pi}^!\bQ\to \tau_{\leq -1}R\tilde{\pi}_!\tilde{\pi}^!\bQ\to
R^{-1}\tilde{\pi_!}\tilde{\pi}^!\bQ[1]=\pi^*\sH[1].
\end{equation}

\begin{lemma}\label{lem:compute_once}The above composition \eqref{eq:comp} of morphisms in $\D(G)$
agrees with $\Log{1}$ as element of $\Ext^1_G(\bQ,\pi^*\sH)$.
\end{lemma}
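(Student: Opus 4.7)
The plan is to use the splitting of the defining exact sequence \eqref{eq:Log-1} induced by the section $e$ in order to characterize $\Log{1}$ as an element of $\Ext^1_G(\bQ,\pi^*\sH)$ by two properties, and then verify both for the composition in \eqref{eq:comp} (which I denote $\xi$). Since $e$ is a section of $\pi$, the map $e^*$ gives a retraction of the injection in \eqref{eq:Log-1} (identifying $\pi^!$ with $\pi^*$ modulo a twist, as $\pi$ is smooth). Thus the class $\Log{1}$ is the unique element of $\Ext^1_G(\bQ,\pi^*\sH)$ satisfying: (i) its image under the edge map is $\id_\sH$, and (ii) $e^*$ of the class is $0$ (the latter encoding that the extension admits a splitting after pulling back via $e$, consistent with the datum $\mathbf{1}^{(1)}$).

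For property (ii), base change for $\tilde\pi:G\times_S G\to G$ along $e:S\to G$ identifies $e^*R\tilde\pi_!\tilde\pi^!\bQ$ with $R\pi_!\pi^!\bQ$, and under this identification the diagonal $\Delta$ pulls back to the section $e$ itself. Hence the first map of \eqref{eq:comp}, pulled back along $e$, becomes exactly the map $\bQ\to R\pi_!\pi^!\bQ$ induced by the unit section, i.e.\ the very map defining the splitting \eqref{eq:coh-splitting}. Composition with the projection onto $\tau_{\leq-1}R\pi_!\pi^!\bQ$ is therefore zero, so $e^*\xi=0$.

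For property (i), I transport the question along $(R\pi_!,\pi^!)$: elements of $\Ext^1_G(\bQ,\pi^*\sH)$ correspond to morphisms $R\pi_!\pi^!\bQ\to\sH[1]$ in $\D(S)$, and under this identification the edge map is simply restriction to $\tau_{\leq-1}R\pi_!\pi^!\bQ$, combined with the canonical isomorphism $\Hom_{\D(S)}(\tau_{\leq-1}R\pi_!\pi^!\bQ,\sH[1])\isom\Hom_S(\sH,\sH)$. Write $\xi=\pi^*(\tau)\circ a$, where $a:\bQ\to \pi^*R\pi_!\pi^!\bQ$ is the first map of \eqref{eq:comp} (rewritten via the base change iso $R\tilde\pi_!\tilde\pi^!\bQ\isom\pi^*R\pi_!\pi^!\bQ$) and $\tau:R\pi_!\pi^!\bQ\to\sH[1]$ is the composition of the projection onto $\tau_{\leq-1}R\pi_!\pi^!\bQ$ with the truncation to top cohomology. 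By naturality of adjunction, the adjoint $\tilde\xi:R\pi_!\pi^!\bQ\to\sH[1]$ of $\xi$ factors as $\tau\circ\tilde a$, where $\tilde a$ is the adjoint of $a$. The crucial point is that $a$ is the unit of the adjunction $(\pi_\sharp,\pi^*)$ at $\bQ_G$, where for smooth $\pi$ one has $\pi_\sharp\bQ_G=R\pi_!\pi^!\bQ$; a unit of an adjunction corresponds under the adjunction isomorphism to the identity, so $\tilde a=\id$. Consequently $\tilde\xi=\tau$, and its restriction to $\tau_{\leq-1}R\pi_!\pi^!\bQ$ is the tautological truncation, which by definition corresponds to $\id_\sH$.

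The main obstacle is the identification of $a$ with the unit of $(\pi_\sharp,\pi^*)$: on the one hand $a$ is constructed concretely from the counit of $(R\Delta_!,\Delta^!)$ applied to $\tilde\pi^!\bQ$, whereas the unit of $(\pi_\sharp,\pi^*)$ is defined abstractly. Checking that these agree under the base change isomorphism amounts to a standard compatibility between base change and unit/counit maps in the six-functor formalism; it can either be cited from the foundational references for each sheaf-theoretic setting or verified by unwinding the definitions using $\tilde\pi\circ\Delta=\id_G$.
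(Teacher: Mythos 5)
Your proposal is correct and follows essentially the same route as the paper: it reduces the claim to the two defining properties of $\Log{1}$ (vanishing of the class under $e^*$, checked by observing that $\Delta$ and $\tilde e$ restrict to $e$ over the unit section, and edge-map image equal to $\id_\sH$, checked by a formal adjunction computation for the diagonal-induced map). The only difference is bookkeeping in the second step: the paper identifies the adjoint of the class via the K\"unneth decomposition of $R(\pi\times\pi)_!\bQ$ and the counit in the first factor, whereas you identify the diagonal map with the unit of $(\pi_\#,\pi^*)$ and invoke the triangle identity --- the same formal content, with the same base-change/unit compatibility left to a standard unwinding in both cases.
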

\begin{proof}
Let $\cL$ be extension class in the Lemma. 
By Definition \ref{defn:log} we have to check that
\begin{enumerate}
\item $e^*(\cL)=0$ (the $1$-extension is split),
\item the image of $\cL$ in $\Hom_{S}(\Hh, \Hh)$ under the map induced from the Leray spectral sequence 
is
the identity map $\Hh\to \Hh$.
\end{enumerate}
The first statement is true by construction because the restriction
of $\Delta$ and $\tilde{e}$ to the unit section is the unit section $e$.
The splitting of $e^*\cL$ is the one induced from $\tilde{e}$.

We turn to the second statement and review the construction
of the map to $\Hom_S(\Hh,\Hh)$. 
We view $[\cL]$ in $\Hom_G(\pi^!\bQ,\pi^!\sH[1])$. Using
the adjunction between $\pi^!$ and $R\pi_!$ amounts to the composition
\[ R\pi_!\pi^!\bQ\xrightarrow{R\pi_!\cL}R\pi_!\pi^!\sH[1]\to \sH[1].\]
The map ``given by the Leray spectral sequence'' is the one
obtained by precomposing with
\[ \tau_{\leq -1}R\pi_!\pi^!\bQ\to R\pi_!\pi^!\bQ .\]
The result naturally factors via
\[ \sH[1]\to \sH[1]\]
for degree reasons.
The map $R\pi_!\cL$ is induced from 
\[ R\pi_!\bQ\xrightarrow{\Delta_!}R(\pi\times\pi)_!\bQ=
    R\pi_!\bQ\otimes R\pi_!\bQ\to R\pi_!\bQ\otimes \sH[1].\]
We compose with $R\pi_!\pi^!\bQ\to\bQ$ in the first factor.
This agrees with projection to the second factor of $G\times G$,
i.e., to the map induced by the identity.
\end{proof}

\subsection{Functoriality and splitting principle}
We collect some fundamental properties of the logarithm sheaf.

The first important property is the functoriality. Let 
\[
\varphi:G_1\to G_2
\] 
be a homomorphism of group schemes of relative 
dimension $d_1$, $d_2$, respectively, and
$\varphi_!:\sH_{G_1}\to \sH_{G_2}$ be the associated morphism of the homology. 
\begin{theorem}[Functoriality] \label{thm:functoriality} 
Let $c:=d_1-d_2$ be the relative dimension of the homomorphism $\varphi:G_1\to G_2$. Then there is a unique
homomorphism of sheaves
\[
\varphi_\#:\cLog_{G_1}\to \varphi^{*}\cLog_{G_2}\isom \varphi^!\cLog_{G_"}(-c)[-2c]
\]
such that $\mathbf{1}_{G_1}$ maps to $\mathbf{1}_{G_2}$ and which respects the canonical filtrations
on both sides. 
The induced map on the associated graded
 
\[ \gr\varphi_\#:\bigoplus_{n\ge 0}\pi_1^*\Sym^n\sH_{G_1}\to \bigoplus_{n\ge 0}\pi_1^*\Sym^n\sH_{G_2}
\]
coincides with $\Sym^\cdot\varphi_!$. 
If $\varphi$ is an isogeny one has $\varphi_\#:\cLog_{G_1}\to \varphi^!\cLog_{G_2}$.
\end{theorem}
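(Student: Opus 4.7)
The plan is to construct $\varphi_\#$ first on the universal Kummer extension $\Log{1}$, extend to the higher symmetric powers by functoriality of $\Sym^{n}$, and then deduce uniqueness from Lemma \ref{lemma:e-upper-star}. The isogeny assertion will follow at once by specialising Lemma \ref{lem:unipotent_uppershriek} to $c=0$.

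For the construction at $n=1$, observe that the pullback $\varphi^{*}\Log{1}_{G_2}$ is an extension of $\bQ$ by $\pi_1^{*}\sH_{G_2}$ on $G_1$, and by the analog of \eqref{eq:Log-1} on $G_1$ it is classified by an element of $\Hom_S(\sH_{G_1},\sH_{G_2})$. The crucial identification is that this element equals $\varphi_!$. To check it, I would use Lemma \ref{lem:compute_once}, which realises $\Log{1}_{G_2}$ as the composition
\[
\bQ\to R\tilde{\pi}_{2,!}\tilde{\pi}_{2}^{!}\bQ\to \pi_{2}^{*}\sH_{G_2}[1].
\]
Pulling back along $\varphi$, proper base change applied to the Cartesian square with vertices $G_2\times_S G_2,\, G_2\times_S G_1,\, G_2,\, G_1$ rewrites the middle term as $Rp_!p^{!}\bQ$ for the second projection $p:G_2\times_S G_1\to G_1$. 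The morphism $\varphi\times\id:G_1\times_S G_1\to G_2\times_S G_1$ of $G_1$-schemes, together with the adjunction $(\varphi\times\id)_!(\varphi\times\id)^{!}\to\id$, then supplies a natural map $R\tilde{\pi}_{1,!}\tilde{\pi}_{1}^{!}\bQ\to Rp_!p^{!}\bQ$; on $H^{-1}$ this agrees with $\pi_1^{*}\varphi_!$ directly from the defining adjunction $\varphi_!\varphi^{!}\to\id$ of $\varphi_!$. Consequently $\varphi^{*}\Log{1}_{G_2}$ is the pushout of $\Log{1}_{G_1}$ along $\pi_1^{*}\varphi_!$, and the pushout morphism gives a candidate $\varphi_\#^{(1)}$. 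The splitting condition $e_1^{*}\varphi_\#^{(1)}(\mathbf{1}^{(1)}_{G_1})=\mathbf{1}^{(1)}_{G_2}$ is then enforced by a unique adjustment within the $\Hom_{G_1}(\bQ,\pi_1^{*}\sH_{G_2})$-torsor of lifts, injectivity of $e_1^{*}$ on this torsor coming from Lemma \ref{lemma:e-upper-star}.

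For $n\ge 2$, set $\varphi_\#:=\Sym^{n}\varphi_\#^{(1)}:\Log{n}_{G_1}\to\varphi^{*}\Log{n}_{G_2}$. Functoriality of $\Sym^{n}$ ensures compatibility with the transition maps $\Log{n+1}\to\Log{n}$, that $\mathbf{1}^{(n)}_{G_1}\mapsto\mathbf{1}^{(n)}_{G_2}$, and that the induced map on associated graded pieces is $\Sym^{n}\varphi_!$ as asserted. Uniqueness reduces by a filtration induction to the $n=1$ case: any second candidate $\varphi_\#'$, being filtration-respecting, descends via the quotient $\Log{n}\twoheadrightarrow\Log{1}$ to a map $\Log{1}_{G_1}\to\varphi^{*}\Log{1}_{G_2}$ satisfying the hypotheses of the $n=1$ uniqueness, so $\varphi_\#'$ and $\varphi_\#$ agree on the $\Log{1}$-quotient; the difference then factors through the next filtration piece, and the same argument applied inductively up the finite filtration of $\Log{n}$, combined with Lemma \ref{lemma:e-upper-star}, forces $\varphi_\#'=\varphi_\#$.

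In the isogeny case $c=d_1-d_2=0$, so Lemma \ref{lem:unipotent_uppershriek} specialises to $\varphi^{*}\cLog_{G_2}=\varphi^{!}\cLog_{G_2}$ without any twist, and $\varphi_\#$ may equivalently be read as a map into $\varphi^{!}\cLog_{G_2}$. The main obstacle is the identification of the pulled-back extension class with $\varphi_!$: threading through the base-change and projection isomorphisms in the universal Kummer construction takes care, particularly in verifying that the diagonal and unit-section data entering Lemma \ref{lem:compute_once} intertwine correctly with the adjunction defining $\varphi_!$. Once this identification is secured, the remainder is a formal consequence of the unipotent Hom computations of Section \ref{sec:unipotent}.
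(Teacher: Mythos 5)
Your proposal is correct and its skeleton is the same as the paper's: reduce to $\Log{1}$ using compatibility of $\varphi^{*}$ with $\Sym^{n}$, show that $\varphi^{*}\Log{1}_{G_2}$ is the pushout of $\Log{1}_{G_1}$ along $\varphi_!$, correct the resulting map so that it matches the splittings, and deduce uniqueness from Lemma \ref{lemma:e-upper-star}; the isogeny clause is just the case $c=0$ of Lemma \ref{lem:unipotent_uppershriek}. Where you genuinely differ is in how the pushout identity is verified: the paper reads it off from the defining characterization of $\Log{1}$ through the split sequence \eqref{eq:Log-1} (both classes have image $\varphi_!$ in $\Hom_S(\sH_{G_1},\sH_{G_2})$ and both restrict to split extensions along $e_1$), while you unwind the explicit Kummer description of Lemma \ref{lem:compute_once} and compare via base change along $\varphi$ and the map induced by $\varphi\times\id$. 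Your computation is longer but, carried out as you sketch it (the section $(\varphi\times\id)\circ\Delta_1$ factors through $\varphi\times\id$, and the comparison map respects the unit-section splittings because $\varphi\circ e_1=e_2$), it yields equality of the two classes in $\Ext^1_{G_1}(\bQ,\pi_1^{*}\sH_{G_2})$ outright; this extra strength is actually needed, because your opening phrase ``classified by an element of $\Hom_S(\sH_{G_1},\sH_{G_2})$'' is too strong as stated: the sequence \eqref{eq:Log-1} has kernel $\Ext^1_S(\bQ,\sH_{G_2})$, so equality of images under the edge map pins down the class only after one also observes that both extensions die under $e_1^{*}$, which is how the paper argues. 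Two further points to make explicit: for the splitting adjustment you need not just injectivity but surjectivity of $e_1^{*}$ on $\Hom_{G_1}(\bQ,\pi_1^{*}\sH_{G_2})$ --- it is in fact an isomorphism, as shown inside the proof of Lemma \ref{lemma:e-upper-star}, since the target is pulled back from $S$ --- and your filtration induction for uniqueness at level $n$ should, as the paper implicitly does, be run for maps inducing the prescribed maps on the associated graded pieces, so that the successive differences are genuinely morphisms out of $\bQ$ into a pulled-back sheaf, to which Lemma \ref{lemma:e-upper-star} then applies.
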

\begin{proof}
We are going to define a homomorphism
\[ \Log{n}_{G_1}\to\phi^*\Log{n}_{G_2}.\]
Assuming this, the right hand side agrees with 
 $\phi^!\Log{n}_{G_2}(-c)[-2c]$ where 
$c=d_1-d_2$ by Lemma \ref{lem:unipotent_uppershriek}. 

As $\phi^*$ is compatible with tensor products,
it suffices to prove the statement for $\Log{1}$. 
The sheaf 
$\varphi^{*}\Log{1}_{G_2}$ defines an extension class in $\Ext^1_{G_1}(\bQ,\pi_1^*\sH_{G_2})$. The push-out of 
$\Log{1}_{G_1}$ by $\varphi_!:\sH_{G_1}\to \sH_{G_2}$ defines also a class in this $\Ext$-group and from the definition one sees that these classes agree. Hence, one has a map of extensions
\[
\begin{CD}
0@>>>\pi_1^{*}\sH_{G_1}@>>>\Log{1}_{G_1}@>>>\bQ@>>> 0\\
@.@V\varphi_!VV@VhVV@|\\
0@>>>\pi_1^{*}\sH_{G_2}@>>>\varphi^{*}\Log{1}_{G_2}@>>>\bQ@>>> 0.
\end{CD}
\]
Taking the pull-back by $e_1^{*}$ and using purity  one gets a splitting
\[
e_1^{*}(h)\circ \mathbf{1}_{G_1}^{(1)}:\bQ\to e_1^{*}\Log{1}_{G_1}\to e_2^{*}\Log{1}_{G_2}.
\]
By uniqueness there is a unique isomorphism of the pair
$(\Log{1}_{G_2},e_1^{*}(h)\circ \mathbf{1}_{G_1}^{(1)})$ with 
$(\Log{1}_{G_2},\mathbf{1}_{G_2}^{(1)})$. The composition of this with
$h$ gives the desired map. 

The difference of any
two maps $h,h':\Log{1}_{G_1}\to \varphi^{*}\Log{1}_{G_2}$ induces 
 a homomorphism $h-h':\bQ\to\pi_1^*\sH$,
 which by Lemma
\ref{lemma:e-upper-star} is uniquely determined by its pull-back 
$e_1^{*}(h-h'):\bQ\to e_2^{*}\Log{1}_{G_2}$. If $h$ and $h'$ are compatible with the splittings the map $e_1^{*}(h-h')$ has to be zero, so that $h=h'$.
\end{proof}
\begin{cor}[Splitting principle]\label{cor:splitting_sheaf} Let $\varphi:G_1\to G_2$ be an isogeny, then 
\[
\varphi_\#:\cLog_{G_1}\to \varphi^!\cLog_{G_2}
\]
is an isomorphism. In particular, if 
$t:S\to G_1$ is in the kernel of $\varphi$, then
\[
t^*\cLog_{G_1}\isom\prod_{n\ge 0}\Sym^n\sH_{G_2}.
\]
\end{cor}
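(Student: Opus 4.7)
The plan is to reduce the first assertion to showing that $\varphi_!:\sH_{G_1}\to\sH_{G_2}$ is an isomorphism, and then to derive the splitting by pulling back along $t$.

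First, since $\varphi$ is an isogeny we have $d_1=d_2$, so $c=0$; by Lemma \ref{lem:unipotent_uppershriek} applied to the unipotent pro-sheaf $\cLog_{G_2}$ this gives $\varphi^!\cLog_{G_2}=\varphi^*\cLog_{G_2}$. Theorem \ref{thm:functoriality} thus already furnishes a filtered morphism
\begin{equation*}
\varphi_\#:\cLog_{G_1}\to\varphi^!\cLog_{G_2}=\varphi^*\cLog_{G_2}.
\end{equation*}
To show this is an isomorphism it suffices to check each finite level $\Log{n}_{G_1}\to\varphi^*\Log{n}_{G_2}$. I would proceed by induction on $n$, applying the five lemma to the defining extension
\begin{equation*}
0\to\pi_1^*\Sym^n\sH_{G_1}\to\Log{n}_{G_1}\to\Log{n-1}_{G_1}\to 0
\end{equation*}
and the pullback via $\varphi^*$ of the corresponding sequence on $G_2$. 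By the compatibility $\gr\varphi_\#=\Sym^\cdot\varphi_!$ from the theorem, the map on the leftmost term is $\pi_1^*\Sym^n(\varphi_!)$, which is an isomorphism as soon as $\varphi_!$ is.

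Thus the content is concentrated in showing $\varphi_!:\sH_{G_1}\to\sH_{G_2}$ is an isomorphism for an isogeny; this is the main (but mild) obstacle. My approach would be to use a dual isogeny: for $N=\deg\varphi$ pick $\psi:G_2\to G_1$ with $\psi\circ\varphi=[N]_{G_1}$ and $\varphi\circ\psi=[N]_{G_2}$, and by functoriality of $\sH$ deduce that $\psi_!\varphi_!$ and $\varphi_!\psi_!$ are both multiplication by $N$, invertible on the $\bQ$-linear object $\sH$. A more intrinsic variant uses that with rational coefficients the counit $R\varphi_!\varphi^!\bQ\to\bQ$ admits a section, since its composition with the unit is multiplication by $\deg\varphi$; applying $R^{-1}\pi_{2!}$ then yields the claim.

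For the second assertion, given $t:S\to G_1$ with $\varphi\circ t=e_2$, I would apply $t^*$ to the isomorphism produced above and obtain
\begin{equation*}
t^*\cLog_{G_1}\cong t^*\varphi^*\cLog_{G_2}=(\varphi\circ t)^*\cLog_{G_2}=e_2^*\cLog_{G_2},
\end{equation*}
and then identify the right-hand side with $\prod_{n\ge 0}\Sym^n\sH_{G_2}$ via the splitting $\mathbf{1}_{G_2}$ recorded after Definition \ref{defn:log}.
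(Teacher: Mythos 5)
Your proposal is correct and follows essentially the same route as the paper: reduce via $\gr\varphi_\#=\Sym^\cdot\varphi_!$ and the unipotent filtration to $\varphi_!$ being an isomorphism (which the paper simply asserts from the use of $\bQ$-coefficients, and which your dual-isogeny argument justifies), then pull back along $t$ using $\varphi\circ t=e_2$ and the splitting $\mathbf{1}_{G_2}$; the paper phrases the last step with $t^!$ plus purity (Lemma \ref{lem:unipotent_uppershriek}) rather than $t^*$ directly, an immaterial difference. Only your parenthetical ``more intrinsic variant'' is incomplete as stated, since the split counit only gives surjectivity of $\varphi_!$, but your main argument does not rely on it.
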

\begin{proof}
By Corollary \ref{thm:functoriality} the map $\gr\varphi_\#$ is 
an isomorphism as $\varphi_!:\sH_{G_1}\to \sH_{G_2}$ is already an isomorphism
(recall that we have $\bQ$-coefficients). From this one sees that 
$\varphi_\#:\Log{n}_{G_1}\to \varphi^!\Log{n}_{G_2}$ is an isomorphism. 
Applying $t^{!}$ gives, as $\varphi\circ t=e_2$, the isomorphism 
$t^!\cLog_{G_1}\isom t^!\varphi^!\cLog_{G_2}\isom(e_2)^!\cLog_{G_2}$.
By purity or more precisely Lemma \ref{lem:unipotent_uppershriek}
we get $t^*\cLog_{G_1}\isom(e_2)^{*}\cLog_{G_2}\isom \prod_{n\ge 0}\Sym^n\sH_{G_2}$.
\end{proof}
\subsection{Vanishing of cohomology}
The second property of the logarithm sheaf concerns the cohomology, which
is important for the proof of all other properties and
the definition of the polylogarithm.
\begin{theorem}[Vanishing of cohomology]\label{thm:vanishing}
One has 
\[
R^i\pi_!\cLog \isom \begin{cases}
\bQ(-d) & i=2d\\
0 & i\neq 2d.
\end{cases}
\]
Let $G$ be an extension of an abelian scheme of relative dimension $g$ by a torus or rank $r$. Then
$\sH$ is a locally constant $\bQ$-sheaf of dimension $h:=\dim_\bQ\sH=2g+r$, and one also has
\[
R^i\pi_*\cLog \isom \begin{cases}
{\bigwedge}^{h}\sH^{\vee} & i=h\\
0 & i\neq h
\end{cases}
\]
where $\sH^{\vee}=\underline{\Hom}_S(\sH,\bQ)$ is the dual of $\sH$.
\end{theorem}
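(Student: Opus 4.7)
The plan is to compute both $R\pi_!\cLog$ and $R\pi_*\cLog$ by reducing, via the unipotent filtration on $\Log{n}$, to Koszul-type arguments driven by the universal Kummer extension class. For $R\pi_!\cLog$, I would apply $R\pi_!$ to the short exact sequence
\[
0 \to \pi^*\Sym^n\sH \to \Log{n} \to \Log{n-1} \to 0
\]
and use the projection formula $R\pi_!\pi^*\Sym^n\sH \isom R\pi_!\bQ \otimes \Sym^n\sH$ to obtain long exact sequences in $R^i\pi_!$. For $n = 1$, Lemma \ref{lem:compute_once} identifies the boundary $R^i\pi_!\bQ \to R^{i+1}\pi_!\bQ \otimes \sH$ with the cup product against the Kummer class, which corresponds to the identity in $\Hom_S(\sH,\sH)$. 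Combined with $R^{2d-1}\pi_!\bQ(d) = \sH$ and $R^{2d}\pi_!\bQ = \bQ(-d)$, the boundary at $i = 2d-1$ becomes the identity $\sH(-d) \to \sH(-d)$, whence $R^{2d}\pi_!\Log{1} = \bQ(-d)$ and the transition to $R^{2d}\pi_!\bQ$ is an isomorphism.

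The main obstacle is propagating this to arbitrary $n$: one must show that, in the pro-system $(R^i\pi_!\Log{n})_n$, the graded contributions $R^i\pi_!\bQ \otimes \Sym^n\sH$ for $n \geq 1$ are uniformly killed by the connecting maps. The key input is that the cup product with the universal Kummer class gives a Koszul-type differential on the symmetric algebra $\Sym^\bullet \sH$, encoded by the identity of $\sH$, hence maximally nondegenerate and acyclic. Making this precise in the pro-category, without direct access to an eigenspace decomposition of $R\pi_!\bQ$ outside the abelian scheme case, requires a careful bookkeeping of transition maps, and is presumably the technical content deferred to Section \ref{proof:vanishing}.

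For $R\pi_*\cLog$ in the abelian-by-torus case, $G$ is smooth of dimension $d = g + r$ and $\sH$ is locally constant of rank $h = 2g + r$. The strategy is parallel: apply $R\pi_*$ to the filtration of $\Log{n}$, using the projection formula $R\pi_*\pi^*\Sym^n\sH \isom R\pi_*\bQ \otimes \Sym^n\sH$ (valid because $\sH$ is locally constant). For extensions of abelian schemes by tori, the Leray spectral sequence for $G \to A$ together with the standard computation for abelian varieties and tori gives $R^i\pi_*\bQ = \bigwedge^i \sH^\vee$, with $R^h\pi_*\bQ = \bigwedge^h \sH^\vee$ the top piece. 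Running the analogous Koszul argument with these inputs, the graded contributions with $n \geq 1$ are again uniformly killed, leaving only $\bigwedge^h \sH^\vee$ in degree $h$. Alternatively, one may derive the second assertion from the first by relative Verdier duality, converting $\bQ(-d)$ in degree $2d$ into $\bigwedge^h \sH^\vee$ in degree $h$ via the orientation isomorphism $\bigwedge^h \sH^\vee \otimes \bQ(d) \isom \bQ$ specific to extensions of abelian schemes by tori.
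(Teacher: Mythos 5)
Your overall strategy is the paper's: compute via the unipotent filtration, identify the connecting maps with Koszul differentials coming from the Kummer class (Lemma \ref{lemma:boundary}), use $R^i\pi_!\bQ(d)\isom\bigwedge^{2d-i}\sH$, and for $R\pi_*$ run the dual argument with $R^i\pi_*\bQ\isom\bigwedge^i\sH^\vee$. But the step you defer ("the graded contributions for $n\geq 1$ are uniformly killed by the connecting maps \dots presumably the technical content deferred to Section \ref{proof:vanishing}") is precisely the heart of the proof, and your phrasing suggests a mechanism that is not the actual one. At any finite level $n$ the relevant complex is the \emph{truncated} Koszul complex, which is not exact at its right end: Proposition \ref{Log-cohomology} shows $R^i\pi_!\Log{n}(d)$ equals $\coker d^{n-1}_{2d-i+n}$ for $0<i<2d$ and $R^0\pi_!\bQ(d)\otimes\Sym^n\sH$ for $i=0$, which are in general nonzero, so "acyclicity of the Koszul differential" cannot kill them at fixed $n$. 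The vanishing holds only as pro-objects: because $R^i\pi_!\pi^*\Sym^n\sH\to R^i\pi_!\Log{n}$ is surjective (it hits the cokernel) and $\pi^*\Sym^n\sH\to\Log{n}\to\Log{n-1}$ is zero, the transition map $R^i\pi_!\Log{n}\to R^i\pi_!\Log{n-1}$ is the zero map for $i\neq 2d$, while in degree $2d$ the transitions are isomorphisms (Corollary \ref{cor:vanishing}). Without this zero-transition argument your outline does not close; supplying it (together with the inductive identification of the $d_1$-differentials with the Koszul differentials, via the push-out description of $F^{n-1}\Log{n}/F^{n+1}\Log{n}$ from $\Sym^{n-1}\sH\otimes\Log{1}$) is exactly what Section \ref{proof:vanishing} does.

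Your proposed Verdier-duality shortcut for the second statement also does not work as stated. Duality gives $R\pi_*\bigl(D_G(\Log{n})\bigr)\isom D_S\bigl(R\pi_!\Log{n}\bigr)$, and since $\Log{n}$ is lisse unipotent this computes $R\pi_*$ of the \emph{dual} $(\Log{n})^\vee(d)[2d]$, not of $\Log{n}$: dualizing $0\to\pi^*\sH\to\Log{1}\to\bQ\to 0$ reverses the extension, and the dual of the pro-system $\cLog$ is an ind-system, so without a self-duality of $\cLog$ (which requires extra data like a polarization and is not available for a general extension of an abelian scheme by a torus) you cannot convert the $R\pi_!$ statement into the $R\pi_*$ statement. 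The paper instead reruns the Koszul argument on the $R\pi_*$ side, using Poincar\'e duality only to identify $R^i\pi_*\bQ\isom\bigwedge^i\sH^\vee\isom\bigwedge^{h-i}\sH\otimes\bigwedge^h\sH^\vee$ and the connecting map as the comultiplication; your "parallel Koszul argument" sentence is the right route, but it again needs the zero-transition mechanism above, not a finite-level acyclicity claim.
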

The proof of this theorem will be given in Section \ref{proof:vanishing},
see Corollary \ref{cor:vanishing} and Corollary \ref{cor:vanishing_semiabelian}. 

The sheaf $\cLog$ can also be characterized by a universal property. Let $\sF$
be a unipotent sheaf of some finite length $n$ on $G$. Consider the homomorphism
\begin{equation}\label{eq:universal-property-map}
\pi_*\ul\Hom_G(\cLog,\sF)\to e^*\sF
\end{equation}
defined as the composition of 
\[
\pi_*\ul\Hom_G(\cLog,\sF)\to \pi_*e_*e^{*}\ul\Hom_G(\cLog,\sF)
\to \ul\Hom_S(e^*\cLog,e^*\sF)
\] 
with
\[
\ul\Hom_S(e^*\cLog,e^*\sF)\xrightarrow{(\mathbf{1})^{*}}
\ul\Hom_S(\bQ,e^*\sF)\isom e^*\sF.
\]
The same composition on the derived level defines a morphism
\begin{equation}\label{eq:universal-property-map-derived}
R\pi_*\ul {R\Hom}_G(\cLog,\sF)\to e^*\sF
\end{equation}

\begin{theorem}[Universal property]\label{thm:sheaf-univ-property}
Let $\sF$ be a unipotent sheaf, then the  map \eqref{eq:universal-property-map} induces an isomorphism
\[
\pi_*\ul\Hom(\cLog,\sF)\isom e^*\sF.
\]
Let $M$ be a unipotent object in the derived category of sheaves $\D(G)$.
Then the  morphism \eqref{eq:universal-property-map-derived} 
is an isomorphism
\[
R\pi_*\ul{R\Hom}(\cLog,M)\isom e^*M.
\]
\end{theorem}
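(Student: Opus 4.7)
The plan is to prove the derived statement by induction on the unipotent length of $M$, and to deduce the non-derived statement by taking $H^0$: for a sheaf $\sF$ the target $e^*\sF$ is concentrated in degree zero, and the hypercohomology spectral sequence $R^p\pi_*R^q\ul\Hom(\cLog,\sF)\Rightarrow R^{p+q}\pi_*\ul{R\Hom}(\cLog,\sF)$ has no contributions in total degree $0$ besides $(p,q)=(0,0)$, so $H^0$ returns exactly $\pi_*\ul\Hom(\cLog,\sF)$.

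For the base case $M=\pi^*N$ with $N\in\D(S)$, Lemma~\ref{lem:unipotent_uppershriek} gives $\pi^*N\isom \pi^!N(-d)[-2d]$; combining this with the internal-Hom form of the $(R\pi_!\dashv\pi^!)$-adjunction and the vanishing $R\pi_!\cLog\isom \bQ(-d)[-2d]$ from Theorem~\ref{thm:vanishing} yields
\begin{align*}
R\pi_*\ul{R\Hom}(\cLog,\pi^*N)
&\isom R\pi_*\ul{R\Hom}(\cLog(d)[2d],\pi^!N)\\
&\isom \ul{R\Hom}(R\pi_!(\cLog(d)[2d]),N)\\
&\isom \ul{R\Hom}(\bQ,N)\isom N=e^*\pi^*N,
\end{align*}
so the source and target of \eqref{eq:universal-property-map-derived} are abstractly isomorphic.

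What I expect to be the main obstacle is identifying this adjunction-based isomorphism with the natural map \eqref{eq:universal-property-map-derived} built from $\mathbf{1}$. I would treat $N=\bQ$ first: the augmentation $\cLog\to\bQ$ is sent by \eqref{eq:universal-property-map-derived} to the composite $\bQ\xrightarrow{\mathbf{1}} e^*\cLog\to\bQ$, which equals $\id_\bQ$ since under $e^*\cLog\isom\prod_{k\geq 0}\Sym^k\sH$ the section $\mathbf{1}$ is the inclusion of, and the augmentation the projection onto, the $k=0$ summand. For general $N$, every morphism $s\colon\bQ\to N$ in $\D(S)$ produces $\psi_s=\pi^*s\circ\mathrm{aug}\colon\cLog\to\pi^*N$ that \eqref{eq:universal-property-map-derived} sends to $s$; hence $s\mapsto\psi_s$ splits \eqref{eq:universal-property-map-derived}, and combined with the abstract isomorphism of both sides with $N$ this forces \eqref{eq:universal-property-map-derived} to be an isomorphism in the base case.

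For the inductive step, apply $R\pi_*\ul{R\Hom}(\cLog,-)$ and $e^*$ to a defining triangle $M_{i-1}\to M_i\to \pi^*N_i$ from the unipotent structure on $M_i$. Both functors are triangulated and \eqref{eq:universal-property-map-derived} is natural in its argument, so one obtains a morphism of distinguished triangles whose outer vertical maps are isomorphisms by the inductive hypothesis and the base case; the five lemma concludes.
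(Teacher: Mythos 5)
Your proof follows the paper's own route: reduce everything to the derived statement (the sheaf case then follows because the derived Hom object is concentrated in degree $0$), treat the base case $M=\pi^*N$ by combining $\pi^*N\isom\pi^!N(-d)[-2d]$ with the $(R\pi_!,\pi^!)$-adjunction and the vanishing $R\pi_!\cLog\isom\bQ(-d)[-2d]$ of Theorem~\ref{thm:vanishing}, and then induct on the unipotent length via a morphism of exact triangles. This is exactly the paper's argument, so the skeleton of your proposal is fine.

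The one place where you go beyond the paper is the attempt to identify the natural map \eqref{eq:universal-property-map-derived} with the adjunction isomorphism in the base case. You are right that this is the delicate point (the paper passes over it in silence), but the argument you give does not close it. From the fact that $s\mapsto\psi_s$ splits the map induced by \eqref{eq:universal-property-map-derived} on $\Hom_S(\bQ,-)$, together with the abstract isomorphism of source and target with $N$, you cannot conclude that the morphism of \emph{objects} is invertible: a single Hom-group does not detect isomorphisms in a triangulated category. For example, over a point take $N=\bQ\oplus\bQ[1]$ and the endomorphism of $N$ given by projection onto $\bQ$ followed by the inclusion; it induces the identity on $\Hom_S(\bQ,N)$ (since $\Hom_S(\bQ,\bQ[1])=0$ there) and its source and target are abstractly isomorphic, yet it is not invertible. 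So the sentence ``this forces \eqref{eq:universal-property-map-derived} to be an isomorphism'' is not justified as written. To repair it one has to actually unwind the adjunctions: under $R\pi_*\ul{R\Hom}_G(\cLog,\pi^*N)\isom\ul{R\Hom}_S(R\pi_!\cLog(d)[2d],N)$ the map \eqref{eq:universal-property-map-derived} becomes precomposition with the morphism $\bQ\to R\pi_!\cLog(d)[2d]$ obtained from the unit section and $\mathbf{1}$ (via $e^!\cLog\isom e^*\cLog(-d)[-2d]$ and adjunction), and one then checks from the proof of Theorem~\ref{thm:vanishing} (the identification of $R^{2d}\pi_!\cLog(d)$ with $\bQ$ by the trace) that this morphism is invertible. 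With that replacement, or with the paper's level of implicitness, the rest of your induction goes through unchanged.
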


As a consequence the functor $\sF\to\Gamma(S,e^*\sF)$ is pro-represented
by $\cLog$.

\begin{proof}
It suffices to treat the triangulated version. Indeed, if $M=\sF$ is
a sheaf, then $e^*\sF$ is concentrated in degree $0$, and hence
\[ R\pi_*\ul{R\Hom}(\cLog,\sF)=\pi_*\ul{\Hom}(\cLog,\sF).\]

We will show the theorem by induction on the length $n$ of
the unipotent object $M$. We start in the case $n=0$, $M=\pi^*N$. 
We claim that the natural map is  
an isomorphism
\[
R\pi_*R\ul{\Hom}_G(\cLog,\pi^*N)\isom N
\]
Writing $\pi^*N\isom \pi^!N(-d)[-2d]$ then one has by adjunction and because $R\pi_!\cLog\isom \bQ(-d)[-2d]$
\[
R\pi_*R\ul{\Hom}_G(\cLog,\pi^*N)\isom
R\underline{\Hom}_S(R\pi_!\cLog, N(-d)[-2d])\isom
R\underline{\Hom}_S(\bQ,  N)
\]
As $\underline{\Hom}_S(\bQ,  N)\isom N$ is the identity
functor, the claim follows. 

Now assume that the theorem is proven for unipotent objects of 
length $n-1$ and let $M$ be unipotent of length $n$. 
Then we have an exact triangle
\[
M'\to M\to M''
\]
with $M'$ and $M''$ unipotent of length less than $n$. 
We get a morphism of exact triangles
\[
\xymatrix{
R\pi_*R\ul{\Hom}_G(\cLog,M')\ar[r]\ar[d]^{\isom}&
R\pi_*R\ul{\Hom}_G(\cLog,M)
\ar[r]\ar[d]&R\pi_*R\ul{\Hom}_G(\cLog,M'')\ar[d] \\
e^*M'\ar[r]& e^*M\ar[r]& e^*(M'').
}
\]
By induction the outer vertical morphisms are isomorphisms, hence the same is
true in the middle.
\end{proof}

\section{Motivic Logarithm}\label{sec:motlog}
We work in the motivic setting described in Section \ref{sec:setting} and
the geometric situation described there. In particular,
let $S$ be noetherian and finite dimensional. Let $X\to S$ be separated and of finite type.
Recall that we work in the category 
$\DA(X)$ the triangulated category of \'etale motives
without transfers with {\em rational coefficients}, see Section \ref{sec:setting_motivic}.

\subsection{Motives of commutative group schemes}\label{sec:motive_cgs}
Let $G/S$ be a smooth commutative group scheme with connected fibres of relative dimension $d$. The group $G$ defines two natural \'etale sheaves of $\Q$-vector spaces
on the category of smooth $S$-schemes:
\begin{itemize}
\item
 on the one hand $T\mapsto \Q[G(T)]$; its image 
 in $\DA(S)$ is the motive $M_S(G)$.
\item on the other hand $T\mapsto G(T)\tensor\Q$. Following
 \cite[Definition 2.1, 2.3]{AHP} we write $\ul{G}_\Q$ for the \'etale sheaf and $M_1(G)$ for its image in $\DA(S)$. 
\end{itemize}
The summation map $\Q[G]\to\ul{G}_\Q$ induces a natural map
$M_S(G)\to M_1(G)$.

Let $\kd(G)$ be the Kimura dimension of $G$
(see \cite[Definition 1.3]{AHP}). It is at most $2d$.
 The main
result of \cite{AHP}  (see loc.cit. Theorem~3.3) is the existence of a decomposition
\begin{equation}\label{eq:motive-decomp} M_S(G)=\bigoplus_{i= 0}^{\kd(G)}M_n(G), \end{equation}
which is natural in $G$ and $S$.  Moreover,  we have
\[ M_n(G)=\Sym^nM_1(G)\]
and the isomorphism in \eqref{eq:motive-decomp} is
an isomorphism of Hopf objects. 
The motive $M_n(G)$ is
uniquely determined by naturality. 


By \cite[Section 5.2]{AHP} the image of $M_1(G)$ under the (covariant) $\ell$-adic realization is $\Hh_\ell[1]$ where 
$\Hh_\ell$ is the relative Tate-module of Definition \ref{defn:tate}. Its image under
the Betti-realization is the relative first homology $R^{-1}\pi_!\pi^!\Q[1]$.
This motivates the following definition:

\begin{defn}Let $G/S$ be a smooth commutative group scheme with connected fibres. Let
$\Hh:=\Hh_{G/S}\in\DA(S)$ be defined as $M_1(G)[-1]$.
\end{defn}

\subsection{Kummer motives}

\begin{defn}Let $G/S$ be a smooth commutative group scheme with connected fibres. 
Let $s:S\to G$ be a section. 
The {\em Kummer motive} $\Kh(s)$ given by $s$ is
the image of the complex of \'etale sheaves
\[[\Q_S\xrightarrow{s} \ul{G}_\Q]\]
(with $\Q_S$ in degree $0$) in the category $\DA(S)$. 
The {\em Kummer extension} of $s$ is the natural triangle
\[ \Hh_{G/S}\to \Kh(s)\to \Q_S\xrightarrow{s}\Hh_{G/S}[1]\ .\]
\end{defn}

This defines a natural group homomorphism (the {\em motivic Kummer map})
\[ G(S)\to \Hom_{\DA(S)}(\Q_S,\Hh_{G/S}[1])\ .\]
It maps the unit section to the trivial extension. More precisely,
$\Kh(e)$ is the image of the complex of \'etale sheaves
$[\Q_S\xrightarrow{0}\ul{G}_\Q]$, hence the natural inclusion
$[\Q_S\to 0]\to [\Q_S\xrightarrow{0}\ul{G}_\Q]$ induces 
a distinguished splitting
\[ \Kh(e)=\Q_S\oplus \Hh_{G/S}[-1]\]
\begin{rem}
It may seem strange at first glance that the motivic extension $\Log{1}$ has a distinguished splitting, whereas the $\Log{1}$ sheaf has not. In fact, there is a unique splitting of the sheaf theoretic version of $\Log{1}$, which is compatible with all isogenies (see \cite[Section 1.5.]{BKL} for an elaboration).
This splitting coincides with the motivic splitting under the realizations. 
\end{rem}
\begin{lemma}\label{lem:describe_kummer}The Kummer extension is given by the projection
\[ M_S(S)\xrightarrow{M_S(s)} M_S(G)=\bigoplus_i M_i(G)\to M_1(G)=\Hh_{G/S}[1]\]
under the decomposition of \cite{AHP}.
\end{lemma}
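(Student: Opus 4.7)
The plan is to unfold the definition of $\Kh(s)$ on the sheaf level and then transport the resulting connecting morphism to $\DA(S)$.

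First, I would rewrite the defining complex of the Kummer motive as a short exact sequence of complexes of \'etale sheaves
\[ 0\to \ul{G}_\Q[-1]\to [\Q_S\xrightarrow{s}\ul{G}_\Q]\to \Q_S\to 0, \]
where on the right $\Q_S$ sits in degree $0$ and on the left $\ul{G}_\Q$ sits in degree $1$. Passing to $\DA(S)$ through the functor $C^b(\Sh_\et(\Sm))\to\DA(S)$ produces the distinguished triangle
\[ \Hh_{G/S}\to \Kh(s)\to \Q_S\xrightarrow{\partial}\Hh_{G/S}[1]=M_1(G). \]
By construction of the connecting morphism, $\partial$ is the image in $\DA(S)$ of the sheaf morphism $s:\Q_S\to\ul{G}_\Q$ given by the element $s\in G(S)\subset (G(S)\tensor\Q)=\ul{G}_\Q(S)=\Hom_{\Sh_\et(\Sm)}(\Q_S,\ul{G}_\Q)$. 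So the task is to identify this $\partial$ with the composition in the statement.

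Next I would factor $s:\Q_S\to\ul{G}_\Q$ at the sheaf level. The section $s:S\to G$ yields a morphism $[s]:\Q_S\to\Q[G]$ of \'etale sheaves (the characteristic function of the graph of $s$). Composing $[s]$ with the summation map $\sigma:\Q[G]\to\ul{G}_\Q$ gives back $s:\Q_S\to\ul{G}_\Q$, because the graph of $s$ maps to $s\in G(S)\tensor\Q$ under the summation. In diagrammatic form
\[
\begin{CD}
\Q_S @>[s]>> \Q[G] @>\sigma>> \ul{G}_\Q \\
@| @. @| \\
\Q_S @= \Q_S @>s>> \ul{G}_\Q
\end{CD}
\]
commutes as a diagram of complexes.

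Finally, I would transport this factorization to $\DA(S)$. By definition of $M_S$, the image of $[s]:\Q_S\to\Q[G]$ is exactly $M_S(s):M_S(S)\to M_S(G)$. By the construction of $M_1(G)$ in \cite[Definition 2.1, 2.3]{AHP} the image of $\sigma$ is the natural morphism $M_S(G)\to M_1(G)$; under the decomposition $M_S(G)=\bigoplus_{i=0}^{\kd(G)}M_i(G)$ of \cite[Theorem 3.3]{AHP} this morphism is the projection onto the degree-$1$ summand (the decomposition is a decomposition of Hopf objects, and $M_i(G)=\Sym^i M_1(G)$, so $M_S(G)\to M_1(G)$ annihilates all summands except $i=1$). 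Combining these identifications with the diagram above identifies $\partial$ with the composition $M_S(S)\xrightarrow{M_S(s)}M_S(G)\to M_1(G)$, as required.

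The only genuine point to check is the identification of the natural map $M_S(G)\to M_1(G)$ with the projection in the $\bigoplus_i M_i(G)$ decomposition; this however is built into the definitions in \cite{AHP} and does not require new input. The rest is bookkeeping inside $\DA(S)$.
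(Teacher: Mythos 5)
Your argument is correct and is essentially the paper's own proof: both reduce the statement to the observation that, at the level of \'etale sheaves, $M_S(s)$ is induced by $\Q_S=\Q[S]\to\Q[G]$ and $M_S(G)\to M_1(G)$ by the summation map $\Q[G]\to\ul{G}_\Q$, so the composite is induced by $s:\Q_S\to\ul{G}_\Q$, which is by definition the class of the Kummer extension. Your extra steps (the short exact sequence of complexes and the identification of the projection with the summation-induced map) are just explicit spellings-out of what the paper cites as ``by construction'' in \cite{AHP}.
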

\begin{proof}
By construction in loc.cit. the map $M_S(G)\to M_1(G)$ is induced
from the morphism of \'etale sheaves $\Q[G]\to\ul{G}_\Q$. Also
by construction $s:M_S(S)\to M_S(G)$ is induced from
$s:\Q_S=\Q[S]\to \Q[G]$. Hence the composition is induced from
$\Q_S\to\ul{G}_\Q$.
\end{proof}

\begin{rem}\label{lem:compare_kummer}
Let $\ell$ be a prime invertible on $S$.
Then the realization of the Kummer extension is the $\ell$-adic Kummer extension
\[ 0\to \Hh\to\Kh(s)_l\to\Q_l\to 0\]
in $\Ext^1_S(\Q_l,\Hh)$. We do not go into details because we will not need this fact.
\end{rem}

\subsection{Logarithm sheaves}
Let $G/S$ be smooth commutative group scheme with connected fibres.

\begin{defn}\label{defn:logn_motivic} Consider $G\times_SG\to G$ via the first projection.
Let $\Delta:G\to G\times G$ be the diagonal. We put
\[\Log{1}=\Kh(\Delta)\in\DA(G)\]
together with the splitting $\one{1}:\bQ\to e^*\Log{1}$ given
by $e^*\Kh(\Delta)=\Kh(e)$ as before.

We define
\[ \Log{n}=\Sym^n\Log{1}\]
and denote by $\one{n}$ the induced splitting $\Sym^n(\one{1}):\Q_S\to \Log{n}$.
\end{defn}

We first establish the basic properties analogous to the sheaf theoretic case.

\begin{lemma}\label{lem:log_in_e}
The section $\one{n}$ induces isomorphisms
$e^*\Log{n}\to \bigoplus_{i=0}^n\Sym^i\Hh$ and
$e^!\Log{n}\to\bigoplus_{i=0}^n\Sym^i\Hh(-d)[-2d]$.
\end{lemma}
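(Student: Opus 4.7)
The plan is to compute $e^*\Log{1}$ by base change from the Kummer motive $\Kh(\Delta)$, then pass to general $n$ using that $e^*$ is symmetric monoidal, and finally deduce the statement for $e^!$ from Lemma~\ref{lem:unipotent_uppershriek}.

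By Definition~\ref{defn:logn_motivic}, $\Log{1}=\Kh(\Delta)$ where $\Delta\colon G\to G\times_S G$ is the diagonal, viewed as a section of the first projection $p_1$. Since $\Kh$ is produced functorially from a two-term complex of \'etale sheaves, the construction commutes with base change. The pullback of the $G$-group scheme $p_1\colon G\times_S G\to G$ along $e$ is $G\to S$ (via the second projection), and the pullback of $\Delta$ is the unit section $e$. Hence
\[
e^*\Log{1}\isom \Kh(e),
\]
and the distinguished splitting noted immediately after the definition of the Kummer extension identifies $\Kh(e)$ canonically with $\bQ\oplus\Hh$ in such a way that $\one{1}$ corresponds to the inclusion of the first summand.

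For general $n$, the functor $e^*$ is symmetric monoidal and $\DA(S)$ is $\Q$-linear pseudo-abelian, so symmetric powers (defined via the projector $\tfrac{1}{n!}\sum_{\sigma\in S_n}\sigma$) commute with $e^*$. Combined with the standard binomial decomposition, this yields
\[
e^*\Log{n}=e^*\Sym^n\Log{1}\isom\Sym^n(\bQ\oplus\Hh)\isom\bigoplus_{j=0}^n\Sym^j\Hh,
\]
and $\one{n}=\Sym^n(\one{1})$ corresponds under this chain of isomorphisms to the inclusion of the $j=0$ summand $\Sym^0\Hh=\bQ$.

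For the $e^!$ claim, note that $\Log{1}$ is unipotent by its defining Kummer triangle $\pi^*\Hh\to\Log{1}\to \bQ_G\to \pi^*\Hh[1]$, and symmetric powers of a unipotent object are unipotent, with graded pieces that are pullbacks from $S$ (namely $\pi^*\Sym^i\Hh$ for $0\le i\le n$). Since $e\colon S\to G$ is an $S$-morphism between the smooth $S$-schemes $S$ (of relative dimension $0$) and $G$ (of relative dimension $d$), Lemma~\ref{lem:unipotent_uppershriek} applies with $c=-d$ and gives
\[
e^!\Log{n}\isom e^*\Log{n}\tensor\bQ(-d)[-2d]\isom\bigoplus_{i=0}^n\Sym^i\Hh(-d)[-2d].
\]
The main point of care is the compatibility of $\Sym^n$ with the symmetric monoidal functor $e^*$ and the base change identity $e^*\Kh(\Delta)=\Kh(e)$; both are formal in the $\Q$-linear pseudo-abelian triangulated setting once the Kummer motive is written as a two-term complex of \'etale sheaves.
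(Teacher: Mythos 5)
Your proof is correct and follows essentially the same route as the paper: the case $n=1$ via the identification $e^*\Kh(\Delta)=\Kh(e)$ with its distinguished splitting, then the binomial decomposition of $\Sym^n$ of the split object (using that $e^*$ is symmetric monoidal), and finally $e^!\Log{n}=e^*\Log{n}(-d)[-2d]$ from Lemma~\ref{lem:unipotent_uppershriek} applied to the unipotent object $\Log{n}$. Your spelled-out justifications (base change for the Kummer motive, compatibility of $\Sym^n$ with $e^*$, unipotence of $\Log{n}$) are exactly what the paper's terser proof leaves implicit.
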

\begin{proof}The case $n=1$ was discussed above. Passing to 
symmetric powers, we get 
\[\Sym^n\Log{1}\isom \bigoplus_{i=0}^n\Sym^i\Q_S\tensor\Sym^{n-i}\Hh\]
as claimed. The statement on $e^!\Log{n}$ follows by Lemma \ref{lem:unipotent_uppershriek}.
\end{proof}

\begin{prop}\label{log-triangle}For $n\geq 1$ there is a  system of exact triangles in $\DA(G)$:
\[ \Sym^n\pi^*\Hh_{G/S}\to\Log{n}\to \Log{n-1}\ .\]
\end{prop}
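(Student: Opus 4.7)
The plan is to adapt the sheaf theoretic argument of Section \ref{sec:sheaflog} to the triangulated setting of $\DA(G)$, exploiting the $\Q$-linear symmetric monoidal structure. I would proceed in three stages: construct the transition map, construct a map into its fiber, and identify the fiber via a Koszul-type filtration argument.

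First, define the transition map $\Log{n}\to\Log{n-1}$. Consider the morphism $(\id,p):\Log{1}\to\Log{1}\oplus\Q_G$, where $p:\Log{1}\to\Q_G$ is the projection in the Kummer triangle $\pi^*\Hh\xrightarrow{\iota}\Log{1}\xrightarrow{p}\Q_G$. Applying $\Sym^n$ and using the natural decomposition
\[
\Sym^n(\Log{1}\oplus\Q_G)=\bigoplus_{i=0}^n\Sym^{n-i}\Log{1}\otimes\Sym^i\Q_G=\bigoplus_{i=0}^n\Log{n-i},
\]
I project to the $i=1$ summand to obtain $\Log{n}\to\Log{n-1}$. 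Next, applying $\Sym^n$ to $\iota$ yields $\Sym^n\iota:\Sym^n\pi^*\Hh\to\Log{n}$. The composition with the transition map vanishes: since $p\circ\iota=0$ by the Kummer triangle, the composite $\pi^*\Hh\to\Log{1}\oplus\Q_G$ is $(\iota,0)$, and hence its $\Sym^n$ lands entirely in the $i=0$ summand $\Sym^n\Log{1}$, contributing nothing to $\Log{n-1}$. Therefore $\Sym^n\iota$ factors through the fiber $F:=\mathrm{fib}(\Log{n}\to\Log{n-1})$.

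The hard part will be showing that the induced morphism $\Sym^n\pi^*\Hh\to F$ is an isomorphism. The strategy is to establish, more generally, that for any distinguished triangle $A\to B\to C$ in $\DA(G)$ there is a natural distinguished triangle
\[
\Sym^n A\to\Sym^n B\to\Sym^{n-1}B\otimes C
\]
whose second map is the one produced by the procedure of the previous paragraph (applied with $\Log{1}$ replaced by $B$ and $\Q_G$ replaced by $C$). Specialized to the Kummer triangle, and using that $\Sym^{n-1}\Log{1}\otimes\Q_G=\Log{n-1}$, this yields exactly the desired triangle, and by naturality its transition map coincides with the one constructed above. I would prove the Koszul triangle by induction on $n$: the case $n=1$ is the original triangle, and the inductive step combines the octahedral axiom, the decomposition $\Sym^n(B\oplus C)=\bigoplus_{i+j=n}\Sym^iB\otimes\Sym^jC$, and the compatibility of $-\otimes C$ with triangles. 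The principal subtlety is that symmetric powers in a purely triangulated category are not entirely well-behaved; one implicitly relies on the fact that $\DA(G)$ arises from a stable $\Q$-linear symmetric monoidal $\infty$-category (via the models of Ayoub and Cisinski--D\'eglise) in which derived symmetric powers, together with the Koszul filtrations used here, are functorial and compatible with the six functor formalism.
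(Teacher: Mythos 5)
Your transition map and the factorization of $\Sym^n\iota$ through its fibre are fine, but the core of the argument --- identifying the fibre --- does not go through as proposed. First, the general statement you want to prove by induction is false: for an arbitrary distinguished triangle $A\to B\to C$ there is no triangle $\Sym^nA\to\Sym^nB\to\Sym^{n-1}B\tensor C$. Already for a split triangle $B=A\oplus C$ the cofibre of $\Sym^nA\to\Sym^nB$ is $\bigoplus_{i<n}\Sym^iA\tensor\Sym^{n-i}C$, whereas $\Sym^{n-1}B\tensor C$ contains summands such as $\Sym^{n-2}A\tensor C\tensor C$, which differs from $\Sym^{n-2}A\tensor\Sym^{2}C$ by $\Sym^{n-2}A\tensor\bigwedge^{2}C$; comparing realizations (or ranks) shows the two sides disagree as soon as $C$ has rank at least $2$. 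The statement is correct only when $C$ is the unit (or invertible), which is the case you need, but then the ``more general'' lemma your induction targets is simply wrong. Second, and more seriously, even for $C=\Q_G$ the purely triangulated induction cannot be carried out: $\Sym^n$ is defined by splitting the symmetrizing idempotent on the $n$-fold tensor power, and the octahedral axiom gives no control over how this idempotent interacts with the chosen (non-canonical, non-functorial) cones appearing in the induction. This is exactly the well-known obstruction to producing Koszul-type filtrations of symmetric powers from the triangulated structure alone, and your closing remark that one ``implicitly relies'' on a model or $\infty$-categorical enhancement concedes precisely the missing content without supplying it.

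The paper closes this gap in a much cheaper way, and you could too: by construction $\Log{1}$ is the image of an explicit two-term complex $\ul{\cLog}^{(1)}$ of \'etale sheaves on $G$, which sits in a short exact sequence $0\to\pi^*\Hh\to\ul{\cLog}^{(1)}\to\Q_G\to 0$ in the abelian category of complexes in $\Sh_\et(\Sm)$. Taking symmetric powers there (a $\Q$-linear abelian tensor category, where the standard filtration on $\Sym^n$ of a filtered object exists, see \cite[Appendix C]{AEH}) yields short exact sequences of complexes $0\to\Sym^n\pi^*\Hh\to\ul{\cLog}^{(n)}\to\ul{\cLog}^{(n-1)}\to 0$, and the tensor functor $C^b(\Sh_\et(\Sm))\to\DA(G)$ of Section \ref{sec:setting_motivic} sends these to the desired exact triangles; the case $n=1$ uses the identification $\Hh_{G\times G/G}=\pi^*\Hh_{G/S}$ from \cite[Proposition 2.7]{AHP}, which you use tacitly as well. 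If you insist on avoiding the explicit complex, you must genuinely work in a model (or enhancement) in which derived symmetric powers carry functorial Koszul filtrations compatible with the triangulated structure --- that is a substantive input, not a formality to be waved at in the last sentence.
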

\begin{proof} Consider first the case $n=1$. By definition, we have a
distinguished triangle
\[ \Hh_{G\times G/G}\to \Log{1}\to \Q_S\ .\]
By compatibility of $M_1(G)$ with pull-back (see  \cite[Proposition~2.7]{AHP}) we have
\[ \pi^*M_1(G/S)=M_1(G\times G/G)\ .\]
This finishes the proof in this case. We abbreviate $\Hh$ for both
$\Hh_{G/S}$ and $\pi^*\Hh_{G\times G/G}$.

Recall that $\Log{n}$ is the image of a complex $\ul{\cLog}^{(n)}$ of \'etale sheaves on $G$.
The complex $\ul{\cLog}^{(1)}$ has a filtration
\[ 0\to \pi^*\Hh\to \ul{\cLog}^{1}\to \Q_G\to 0\]
in the abelian category of complexes of \'etale sheaves.
Hence the symmetric powers also have a natural filtration (for full details see \cite{AEH} Appendix C). Its associated gradeds are 
\[ \Sym^i(\Hh)\tensor\Sym^j\Q_G=\Sym^i\Hh\ .\]
In the same way as in the $\ell$-adic case, see the discussion before Definition \ref{defn:logn}, we get short exact sequences
of complexes of sheaves
\[ 0\to\Sym^n\Hh\to \ul{\cLog}^{(n)}\to\ul{\cLog}^{(n-1)}\to 0\ .\]
We view them as triangles in $\DA(G)$.
\end{proof}

\subsection{Functoriality}

\begin{theorem}\label{log-funct}Let $\varphi:G_1\to G_2$ be morphism of smooth group schemes with connected fibres over $S$. Let $c=d_1-d_2$ be the relative fibre dimension.
Then there is  a natural map
\[ \varphi_\#:\Log{n}_{G_1}\to \varphi^*\Log{n}_{G_2}=\varphi^!\Log{n}_{G_2}(-c)[-2c].\]
\end{theorem}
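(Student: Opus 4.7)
The plan is to build $\varphi_\#$ at level $n=1$ directly from the Kummer-motive description of $\Log{1}$, then pass to $n$-th symmetric powers, and finally convert $\varphi^*$ into $\varphi^!$ via Lemma \ref{lem:unipotent_uppershriek}. The identification $\varphi^*\Log{n}_{G_2} = \varphi^!\Log{n}_{G_2}(-c)[-2c]$ is immediate once $\Log{n}_{G_2}$ is known to be unipotent, which follows by induction from Proposition \ref{log-triangle}.

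To construct $\varphi_\#^{(1)} : \Log{1}_{G_1} \to \varphi^*\Log{1}_{G_2}$, recall that by Definition \ref{defn:logn_motivic} one has $\Log{1}_{G_i} = \Kh(\Delta_i)$, the Kummer motive of the diagonal $\Delta_i : G_i \to G_i \times_S G_i$ viewed as a section of the first projection. Base change along $\varphi$ transports this to a Kummer motive on $G_1$: explicitly, $\varphi^*\Log{1}_{G_2} = \Kh(\Delta_2')$, where $\Delta_2' : G_1 \to G_1 \times_S G_2$, $g_1 \mapsto (g_1, \varphi(g_1))$, is a section of the first projection $G_1 \times_S G_2 \to G_1$. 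This base-change identity is verified directly on the defining two-term complex of \'etale sheaves, using that the pullback of the representable sheaf for $G_2 \times_S G_2 \to G_2$ is the representable sheaf for $G_1 \times_S G_2 \to G_1$, and that the diagonal pulls back to $\Delta_2'$.

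The next step is the functoriality of the Kummer construction in the group scheme: the $G_1$-group scheme map $(\id_{G_1}, \varphi) : G_1 \times_S G_1 \to G_1 \times_S G_2$ satisfies $(\id, \varphi) \circ \Delta_1 = \Delta_2'$. On \'etale sheaves this induces a morphism of the defining two-term complexes (the identity on $\Q_{G_1}$, and the map induced by $(\id, \varphi)$ on the group-scheme sheaf), hence a canonical morphism of Kummer motives
\[ \varphi_\#^{(1)} : \Log{1}_{G_1} = \Kh(\Delta_1) \longrightarrow \Kh(\Delta_2') = \varphi^*\Log{1}_{G_2}. \]
For $n \geq 2$, since $\varphi^*$ is symmetric monoidal, $\Sym^n\varphi^*\Log{1}_{G_2} = \varphi^*\Sym^n\Log{1}_{G_2} = \varphi^*\Log{n}_{G_2}$, and I set $\varphi_\# := \Sym^n(\varphi_\#^{(1)})$.

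The main obstacle is purely bookkeeping: correctly matching the pulled-back group scheme $G_1 \times_S G_2 \to G_1$ and its section $\Delta_2'$ with a Kummer motive on $G_1$, and checking that $(\id, \varphi)$ intertwines $\Delta_1$ with $\Delta_2'$. Notably, in contrast to the sheaf-theoretic Theorem \ref{thm:functoriality}, the motivic statement asserts only existence of a natural map; no uniqueness, no compatibility with the splittings $\one{n}$, and no identification of $\gr \varphi_\#$ is claimed here, so the bare construction suffices and a motivic analogue of Lemma \ref{lemma:e-upper-star} (which is not available) is not required.
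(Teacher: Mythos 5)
Your construction is correct and is essentially the paper's own proof: reduce to $n=1$ by monoidality of $\varphi^*$, use unipotence and Lemma \ref{lem:unipotent_uppershriek} to rewrite $\varphi^*$ as $\varphi^!(-c)[-2c]$, and obtain the map at level $1$ from the morphism $(\id,\varphi):G_1\times_S G_1\to G_1\times_S G_2$ intertwining $\Delta_{G_1}$ with the graph section, on the defining two-term complexes of \'etale sheaves. The only point to state more carefully is that the identification of $\varphi^*\Kh(\Delta_{G_2})$ with the Kummer motive of the graph section rests on $\varphi^*M_1(G_2\times G_2/G_2)\isom M_1(G_1\times_S G_2/G_1)$, which holds in $\DA(G_1)$ by \cite[Proposition 2.7]{AHP} (compatibility of $M_1$ with base change) rather than literally at the level of \'etale sheaves; this is exactly the citation the paper uses at the same step.
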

\begin{proof}
We construct the map to $\varphi^*\Log{n}_{G_2}$. 
By Lemma \ref{lem:unipotent_uppershriek} one has $\varphi^*\Log{n}_{G_2}=\varphi^!\Log{n}_{G_2}(-c)[-2c]$. As $\varphi^*$ commutes
with tensor product, it
suffices to treat the case $n=1$.
We have  the commutative diagram
\[\begin{CD}
 G_1@>\Delta>>G_1\times G_1\\
 @V\varphi VV@VV\varphi V\\
 G_2@>\Delta>>G_2\times G_2
\end{CD}\]
i.e., $\Delta_{G_1}\in G_1\times G_1(G_1)$ is mapped to $\Delta_{G_2}\in G_2\times G_2$.
This implies that the diagram of sheaves on $G$ commutes
\[ \begin{CD} \Q_{G_1}@>>> \ul{G_1\times G_1}_\Q=\pi_{G_1}^*\ul{G}_{1\Q}\\
@V\id VV @VV\pi^*_{G_1} \varphi V \\
    \varphi^*\Q_{G_2}@>>> \varphi^*\ul{G_2\times G_2}_\Q=\pi_{G_2}^*\ul{G_2}_\Q
\end{CD}\]
We take the image of this diagram in $\DA(G_1)$. The statement follows
because $\varphi^*M_1(G_2)=M_1(\varphi^*G_2)=M_1(G_1)$ by \cite[Proposition 2.7]{AHP}.
\end{proof}

\begin{cor}[Splitting principle] \label{lem:log_in_torsion}
Let $\varphi:G_1\to G_2$ be an isogeny, then 
\[
\varphi_\#:\Log{n}_{G_1}\to \varphi^!\Log{n}_{G_2}
\]
is an isomorphism. In particular, if 
$t:S\to G_1$ is in the kernel of $\varphi$, then
\[
t^*\cLog_{G_1}\isom\prod_{n\ge 0}\Sym^n\sH_{G_2}.
\]
\end{cor}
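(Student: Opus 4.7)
The plan is to transpose the sheaf-theoretic argument of Corollary \ref{cor:splitting_sheaf} into the motivic setting, using the tools of this section.

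First, I would check that for an isogeny $\varphi:G_1\to G_2$ the induced map $\varphi_*:\Hh_{G_1}\to\varphi^*\Hh_{G_2}$ is an isomorphism in $\DA(G_1)$. Since $\Hh_{G_i}=M_1(G_i)[-1]$ and $M_1$ commutes with pullback by \cite[Proposition~2.7]{AHP}, this reduces to showing that the \'etale sheaf map $\ul{G_1}_\Q\to\varphi^*\ul{G_2}_\Q$ is an isomorphism; this holds because the kernel and cokernel of $\varphi$ on $T$-points are annihilated by the order of $\ker\varphi$, which vanishes after tensoring with $\Q$.

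Next, I would verify that $\varphi_\#$ from Theorem \ref{log-funct} respects the unipotent filtration on $\Log{n}$ from Proposition \ref{log-triangle} and that the top graded piece is $\Sym^n\varphi_*$. For $n=1$ this is built into the proof of Theorem \ref{log-funct}, where $\varphi_\#$ is exhibited as the image in $\DA(G_1)$ of a morphism of two-term complexes of \'etale sheaves whose components are $\id$ on $\Q$ and $\pi_{G_1}^*\varphi$ on $\ul{G\times G}_\Q=\pi^*\ul{G}_\Q$. The general case then follows by passing to $\Sym^n$ of this filtered complex exactly as in the proof of Proposition \ref{log-triangle}.

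Then I would induct on $n$. Applying $\varphi_\#$ to the triangle of Proposition \ref{log-triangle} yields a commutative diagram of triangles
\[
\xymatrix{
\Sym^n\pi_1^*\Hh_{G_1}\ar[r]\ar[d]^{\Sym^n\varphi_*}
& \Log{n}_{G_1}\ar[r]\ar[d]^{\varphi_\#}
& \Log{n-1}_{G_1}\ar[d]^{\varphi_\#}\\
\varphi^*\Sym^n\pi_2^*\Hh_{G_2}\ar[r]
& \varphi^*\Log{n}_{G_2}\ar[r]
& \varphi^*\Log{n-1}_{G_2}
}
\]
in which the left vertical arrow is an isomorphism by the first step and the right by induction; the middle one is then an isomorphism by the triangulated five lemma. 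Since an isogeny has $c=d_1-d_2=0$, Lemma \ref{lem:unipotent_uppershriek} identifies $\varphi^*\Log{n}_{G_2}$ with $\varphi^!\Log{n}_{G_2}$, yielding the first assertion.

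For the splitting principle, I apply $t^!$ to the isomorphism just obtained and use $\varphi\circ t=e_2$ to obtain
\[
t^!\Log{n}_{G_1}\isom t^!\varphi^!\Log{n}_{G_2}\isom e_2^!\Log{n}_{G_2}.
\]
Both $t$ and $e_2$ are sections of the smooth maps $\pi_1$ and $\pi_2$ of relative dimension $d$, so Lemma \ref{lem:unipotent_uppershriek} converts each $!$-pullback into the corresponding $*$-pullback up to the same twist $(d)[2d]$, giving $t^*\Log{n}_{G_1}\isom e_2^*\Log{n}_{G_2}$. Combining with Lemma \ref{lem:log_in_e} and passing to the pro-system produces the stated identification.

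The main obstacle will be the second step: Theorem \ref{log-funct} does not explicitly record the graded identification (in contrast to its sheaf-theoretic cousin Theorem \ref{thm:functoriality}), so one must revisit the construction at the level of complexes of \'etale sheaves to confirm both that the filtration is preserved and that the top graded piece is $\Sym^n\varphi_*$, before the induction can be run.
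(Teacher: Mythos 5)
Your proposal is correct, and its first and last steps coincide with the paper's: the heart of the matter is that an isogeny induces an isomorphism $\ul{G_1}_\Q\isom\ul{G_2}_\Q$ of rational \'etale sheaves (so $\Hh_{G_1}\isom\Hh_{G_2}$ and $\varphi^*\pi_2^*\Hh_{G_2}\isom\pi_1^*\Hh_{G_1}$), and the ``in particular'' clause is obtained exactly as you do, by applying $t^!$, using $\varphi\circ t=e_2$, and converting $!$- to $*$-pullbacks via Lemma \ref{lem:unipotent_uppershriek} and Lemma \ref{lem:log_in_e}. Where you diverge is the propagation from $\Hh$ to $\Log{n}$: you run an induction on $n$ over the filtration triangles of Proposition \ref{log-triangle}, which forces you to verify (as you note in your final paragraph) that $\varphi_\#$ respects the filtration with top graded piece $\Sym^n\varphi_*$ — a statement not recorded in Theorem \ref{log-funct}. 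The paper sidesteps this entirely: since $\varphi_\#$ on $\Log{n}=\Sym^n\Log{1}$ is by construction $\Sym^n$ of the $n=1$ map and $\varphi^*$ is compatible with tensor products (hence with $\Sym^n$ in the $\Q$-linear pseudo-abelian category $\DA(G_1)$) and with exact triangles, everything reduces to the $n=1$ Kummer triangle, where the map of triangles is the identity on $\bQ$ and the isomorphism on $\Hh$, so the middle arrow is an isomorphism. Your chain-level verification is perfectly feasible (it is implicit in the proofs of Theorem \ref{log-funct} and Proposition \ref{log-triangle}), so your route closes; it just does more work than necessary, while the $\Sym^n$ reduction buys you the statement with no extra bookkeeping. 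One further small remark: your justification that the cokernel of $G_1(T)\otimes\Q\to G_2(T)\otimes\Q$ dies uses the existence of a complementary isogeny with $\varphi\circ\psi=[N]$; this is fine, and is the content behind the paper's terse assertion $\ul{G}_{1\Q}=\ul{G}_{2\Q}$.
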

\begin{proof}
As $\varphi^*$ is compatible with tensor product and exact triangles, it
suffices to show $\varphi^*\Hh_{G_2}=\Hh_{G_1}$ or equivalently $\Hh_{G_2}=\Hh_{G_1}$ as motives on $S$. This holds by construction because $\ul{G}_{2\Q}=\ul{G}_{1\Q}$.
The rest of the argument is the same as in the sheaf theoretic case,
see Corollary~\ref{cor:splitting_sheaf}.
\end{proof}

\subsection{Vanishing of cohomology}
The second property of the logarithm sheaf concerns the vanishing of the cohomology, which
is important for the proof of all other properties and
the definition of the polylogarithm.
\begin{theorem}[Vanishing of cohomology]\label{thm:vanishing-motivic}
Assume that $S$ is a scheme of characteristic $0$ or that $G/S$ is affine.
One has 
\[
R\pi_!\cLog \isom \bQ(-d)[-2d] \
\]
\end{theorem}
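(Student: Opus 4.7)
My strategy is to compute each $R\pi_!\Log{n}$ using the AHP decomposition of $M_S(G)$, and then to analyse the pro-system $(R\pi_!\Log{n})_n$ via the generalised eigenspace formalism of the appendix applied to the action of multiplication by $a\in\Z$.

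Applying $R\pi_!$ to the triangle $\pi^*\Sym^n\Hh\to\Log{n}\to\Log{n-1}$ of Proposition \ref{log-triangle} and using the projection formula together with $R\pi_!\bQ_G=M_S(G)(-d)[-2d]$ gives
\[
 \Sym^n\Hh\otimes M_S(G)(-d)[-2d]\to R\pi_!\Log{n}\to R\pi_!\Log{n-1}.
\]
Under our hypothesis on $S$ (characteristic zero) or on $G$ (affine over $S$), the AHP decomposition $M_S(G)=\bigoplus_{j=0}^{\kd(G)}\Sym^j M_1(G)$ applies, so each $R\pi_!\Log{n}$ is an iterated extension of the bigraded summands
\[
 \Sym^i\Hh\otimes\Sym^j M_1(G)(-d)[-2d],\qquad 0\le i\le n,\ 0\le j\le\kd(G).
\]

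For $a\in\Z$ with $a\ne 0,\pm1$, Theorem \ref{log-funct} provides a filtered morphism $[a]_\#\colon\Log{n}\to[a]^*\Log{n}$ which acts as $a^i$ on the graded piece $\pi^*\Sym^i\Hh$; combined with the $a^j$-action of $[a]_*$ on $M_j(G)$ (by functoriality of the AHP decomposition) and the relation $\pi\circ[a]=\pi$, this produces via the appendix's machinery a generalised eigenspace decomposition of each $R\pi_!\Log{n}$ in $\DA(S)$, with the $(i,j)$-summand above contributing to the eigenspace for $a^{i+j}$. The transition $\Log{n+1}\to\Log{n}$ suppresses only the top summand $\pi^*\Sym^{n+1}\Hh$, so on each fixed eigenspace the pro-system stabilises for $n$ sufficiently large and the inverse limit is computable eigenspace by eigenspace.

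The main obstacle will then be the vanishing of every non-trivial eigenspace, leaving only the $i=j=0$ summand, which is precisely $\bQ(-d)[-2d]$. This must exploit the specific extension classes assembling $\Log{n}$ from its graded pieces: the point of $\cLog$ being a genuine iterated extension rather than a split sum is to produce the cancellations which collapse the right-hand side to a single Tate twist. The appendix's eigenspace formalism also handles the pro-limit in $\DA(S)$ rigorously, by reducing on each eigenspace to a stabilised finite iterated extension for which the inverse limit is canonical. The hypothesis on $S$ or $G/S$ is precisely what guarantees the availability of the AHP decomposition throughout.
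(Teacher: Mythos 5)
Your setup (projection formula, AHP decomposition of $M_S(G)$, $[a]$-eigenvalues $a^{i+j}$ on the pieces $\Sym^i\Hh\otimes M_j(G)(-d)[-2d]$, passage to the pro-system eigenspace by eigenspace) matches the paper's second half, and the reduction of the theorem to (i) the $a^0$-eigenspace of each $R\pi_!\Log{n}$ being $\bQ(-d)[-2d]$ and (ii) the pro-object of $a^j$-eigenspaces, $j\geq 1$, being pro-zero is exactly how the paper concludes. But the step you flag as ``the main obstacle'' --- the vanishing of the generalized $a^j$-eigenspaces of $R\pi_!\Log{n}$ for $0<j<n$ --- is the entire content of the theorem, and your proposal contains no argument for it. Saying that ``the specific extension classes \ldots produce the cancellations'' cannot be executed inside $\DA(S)$: there is no motivic $t$-structure, so the Koszul-complex spectral sequence computation that produces these cancellations in the sheaf-theoretic setting (Proposition \ref{Log-cohomology}) is not available motivically. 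The paper's Lemma \ref{lem:precise_weights} gets around this by reducing to geometric points, showing that $R\pi_!\Log{n}$ lies in the subcategory of abelian motives (when $S$ has characteristic $0$) or of mixed Tate motives (when $G/S$ is affine), and invoking \emph{conservativity of the $\ell$-adic realization} on that subcategory (Wildeshaus for abelian motives, resting on homological $=$ numerical equivalence for abelian varieties; slice functors of Huber--Kahn in the mixed Tate case), which transfers the $\ell$-adic Koszul computation back to $\DA$. This deep input is missing from your plan.

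Relatedly, you misplace the role of the hypothesis: the AHP decomposition $M_S(G)=\bigoplus_n M_n(G)$ holds for any smooth commutative group scheme with connected fibres over a noetherian finite-dimensional base, with no assumption on the characteristic or affineness. The hypothesis ``$S$ of characteristic $0$ or $G/S$ affine'' is needed precisely to place $R\pi_!\Log{n}$ in a subcategory on which the $\ell$-adic realization is conservative; that is where it enters the paper's proof, not in the availability of the decomposition. Also be careful with the phrase ``vanishing of every non-trivial eigenspace'': for fixed $n$ the eigenvalues $a^j$ with $n\leq j\leq n+\kd(G)$ genuinely occur; what one proves is vanishing for $0<j<n$, so that for each fixed $j\geq 1$ the pro-system of $a^j$-eigenspaces becomes zero once $n>j$.
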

The proof of this theorem will be given in Section \ref{sec:motivic_proof}. 

As in the sheaf theoretic case, this implies a universal property of the motivic logarithm. Let $M$
be a unipotent sheaf of length $n$ on $G$. In the same way as in the case of sheaves (see equation \eqref{eq:universal-property-map}) one has a map 
\begin{equation}\label{eq:motivic-univ-property-map}
R\pi_*R\ul{\Hom}_G(\cLog,M)\to e^*M.
\end{equation}
\begin{theorem}[Universal property]\label{thm:univ-property}
Let $S$ be a scheme of characteristic $0$ or assume that $G/S$ is affine.
Let $M$ be a unipotent motive on $G$, then
the map
\eqref{eq:motivic-univ-property-map} induces an isomorphism
\[
R\pi_*R\ul\Hom(\cLog,M)\isom e^*M.
\]
\end{theorem}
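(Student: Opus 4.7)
The plan is to follow verbatim the strategy used for the sheaf-theoretic universal property (Theorem \ref{thm:sheaf-univ-property}), replacing the sheaf-theoretic vanishing input by Theorem \ref{thm:vanishing-motivic}. Both the six-functor formalism on $\DA$ and Lemma \ref{lem:unipotent_uppershriek} are available in the motivic setting, and the hypothesis on $S$ is needed only in order to invoke Theorem \ref{thm:vanishing-motivic}.

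I would argue by induction on the unipotent length $n$ of $M$. For the base case, reduce to motives of the form $M=\pi^{*}N$ with $N\in\DA(S)$. Using Lemma \ref{lem:unipotent_uppershriek} one rewrites $\pi^{*}N\cong \pi^{!}N(-d)[-2d]$, so adjunction $(R\pi_!,\pi^!)$ together with the vanishing theorem gives
\[
R\pi_{*}R\ul{\Hom}_{G}(\cLog,\pi^{*}N)\cong R\ul{\Hom}_{S}\bigl(R\pi_{!}\cLog,\,N(-d)[-2d]\bigr)\cong R\ul{\Hom}_{S}(\bQ,N)\cong N.
\]
On the other hand $e^{*}\pi^{*}N=N$, and one has to check that the natural map \eqref{eq:motivic-univ-property-map} is the composition of these identifications. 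This is a formal check: the map \eqref{eq:motivic-univ-property-map} is defined via the splitting $\mathbf{1}:\bQ\to e^{*}\cLog$, which by construction of $\cLog$ corresponds under the above adjunction isomorphism to the augmentation $R\pi_{!}\cLog\to\bQ(-d)[-2d]$ coming from the projection $\Log{n}\to \Log{0}=\bQ$.

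For the inductive step, pick a triangle $M'\to M\to M''$ with $M'$ and $M''$ unipotent of strictly smaller length (for instance the one coming from the filtration of $M$). Applying the triangulated functors $R\pi_{*}R\ul{\Hom}_{G}(\cLog,-)$ and $e^{*}$ and using the naturality of the morphism \eqref{eq:motivic-univ-property-map} yields a morphism of exact triangles
\[
\xymatrix@C=1.2em{
R\pi_{*}R\ul{\Hom}_{G}(\cLog,M')\ar[r]\ar[d]^{\isom}&R\pi_{*}R\ul{\Hom}_{G}(\cLog,M)\ar[r]\ar[d]&R\pi_{*}R\ul{\Hom}_{G}(\cLog,M'')\ar[d]^{\isom}\\
e^{*}M'\ar[r]&e^{*}M\ar[r]&e^{*}M''
}
\]
The induction hypothesis makes the outer vertical arrows isomorphisms, so the five lemma (in its triangulated form) forces the middle arrow to be an isomorphism as well, completing the induction.

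The only substantive obstacle is the base case, and within it the verification that the constructed isomorphism really agrees with \eqref{eq:motivic-univ-property-map}; this is of the same nature as the parallel identification in the proof of Theorem \ref{thm:sheaf-univ-property} and is ultimately a formal consequence of the definition of $\mathbf{1}$ and of the way the vanishing theorem identifies $R\pi_{!}\cLog$ with $\bQ(-d)[-2d]$ via the unit section. Everything else is purely formal and uses only the standard six-functor machinery of $\DA$ together with the two already-established inputs (Lemma \ref{lem:unipotent_uppershriek} and Theorem \ref{thm:vanishing-motivic}).
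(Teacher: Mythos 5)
Your proposal is correct and matches the paper's argument: the paper proves Theorem \ref{thm:univ-property} by repeating the proof of Theorem \ref{thm:sheaf-univ-property} verbatim — induction on unipotent length, the base case $M=\pi^{*}N$ via adjunction and $R\pi_!\cLog\isom\bQ(-d)[-2d]$ from Theorem \ref{thm:vanishing-motivic}, and the inductive step via a morphism of exact triangles — exactly as you describe.
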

\begin{proof}The argument is the same as in the sheaf theoretic case, with
Theorem \ref{thm:vanishing-motivic} replacing Theorem \ref{thm:vanishing}.
\end{proof}

\subsection{Realizations}

\begin{prop}\label{prop:real}
\begin{enumerate}
\item
Assume the prime $\ell$ is invertible on  $S$ and $S$ of finite over a regular scheme of dimension $0$ or $1$. Then
the $\ell$-adic realization $R_\ell$ maps the motivic $\Log{n}_G$ to the $\ell$-adic $\Log{n}_G$ as defined in Section \ref{sec:log}
.
\item Assume $S$ is of finite type over $\C$. Then the Betti realization $R_B$ maps the motivic $\Log{n}_G$ to the constructible $\Log{n}_G$ in Section \ref{sec:log}
\end{enumerate}
\end{prop}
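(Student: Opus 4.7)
The plan is to reduce to the case $n=1$ and then invoke the characterization of the sheaf-theoretic $\Log{1}$ by the extension class in \eqref{eq:Log-1}. Both $R_\ell$ and $R_B$ are symmetric monoidal triangulated functors compatible with the six-functor formalism. Since we use $\bQ$-coefficients, $\Sym^n$ is cut out by a functorial idempotent on the $n$-fold tensor product, so it commutes with any symmetric monoidal functor. The splitting $\one{n}$ is the $n$-th symmetric power of $\one{1}$ in both settings, and $e^*$ commutes with the realization. Consequently, it suffices to produce an isomorphism $R(\Log{1}_{\mathrm{mot}}) \simeq \Log{1}_{\mathrm{sh}}$ matching the splittings $\one{1}$; the higher $\Log{n}$ and the transition maps then follow by naturality of the construction.

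For $n=1$, the motivic object is by Definition \ref{defn:logn_motivic} the Kummer motive $\Kh(\Delta)$ of the diagonal section $\Delta: G \to G \times_S G$, with $G \times_S G$ viewed as a group scheme over $G$ via the first projection $\tpi$. By Lemma \ref{lem:describe_kummer}, $\Kh(\Delta)$ is the extension of $\bQ_G$ by $M_1(G \times_S G/G) = \pi^*\Hh[1]$ obtained from $M_G(\Delta): \bQ_G \to M_G(G \times_S G)$ by projecting onto the degree-$1$ summand. Since $R$ commutes with $\tpi^*$ and preserves the Tate object, and since by \cite[Section 5.2]{AHP} it sends $M_1(G)$ to $\Hh[1]$, the realization $R(\Log{1}_{\mathrm{mot}})$ lies naturally in $\Ext^1_G(\bQ,\pi^*\Hh)$. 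By the uniqueness statement following \eqref{eq:Log-1}, identifying it with $\Log{1}_{\mathrm{sh}}$ reduces to checking that its image in $\Hom_S(\Hh,\Hh)$ is the identity, and that the splittings at $e$ agree.

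The sheaf-theoretic $\Log{1}$ was computed in Lemma \ref{lem:compute_once} as the composition $\bQ \to R\tpi_!\tpi^!\bQ \to \pi^*\Hh[1]$, which is the sheaf-theoretic shadow of the diagonal-plus-projection construction above; the splittings at $e$ match via the identity $\Delta \circ e = \te$, which yields $e^*\Kh(\Delta) = \Kh(\te)$ with its canonical splitting, mirrored on the sheaf side by the same factorization through $\te$. The main obstacle will be making these compatibilities fully explicit: what is needed is not just the object-level identification $R(M_1(G)) = \Hh[1]$, but also that the motivic projection $M_S(G) \to M_1(G)$ maps under $R$ to the composition $R\pi_!\pi^!\bQ \to \tau_{\leq -1}R\pi_!\pi^!\bQ \to \Hh[1]$ appearing in \eqref{eq:comp}. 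This refined naturality is essentially the content of the realization calculation in \cite[Section 5.2]{AHP} once one works at the level of the defining complexes of \'etale sheaves; tracing it through relative to the group $\tpi$ then concludes the proof.
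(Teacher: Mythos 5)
Your argument is correct and follows essentially the same route as the paper: reduce to $n=1$ (compatibility of $\Sym^n$ and the splittings with the monoidal realization), describe $\Log{1}=\Kh(\Delta)$ via Lemma \ref{lem:describe_kummer}, realize it to the class of \eqref{eq:comp} using that $R_\ell$, $R_B$ commute with the six functors and send $M_1$ to $\Hh[1]$, and conclude by Lemma \ref{lem:compute_once}. The compatibility you flag at the end (that the realization of $M_S(G)\to M_1(G)$ is the projection in \eqref{eq:comp}, via \cite[Section 5.2]{AHP}) is exactly the point the paper's proof also relies on, so there is no gap beyond what the paper itself leaves implicit.
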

\begin{proof}
The argument is the same in both cases. By construction it suffices to consider the case $n=1$. 
We use the description of the Kummer extension for $\Delta$ given
in Lemma \ref{lem:describe_kummer}. After applying the realization
functor (which commutes with all 6 functors), we obtain
the same class as constructed in Equation (\ref{eq:comp}). By Lemma~\ref{lem:compute_once} this is $\Log{1}$ in the realization.
\end{proof}

\begin{rem}The same argument will also apply in the Hodge theoretic setting once we have a realization functor compatible with the 6 functor formalism.
See the discussion in Section \ref{sec:setting_motivic} on the state of the art.
\end{rem}

\section{The polylogarithm sheaf/motive}\label{sec:polylog}
Unless stated otherwise, we work in the sheaf theoretic and in the motivic setting in parallel.
The pro-sheaf $\cLog=(\Log{n})_{n\geq 0}$ is the one of Definition
\ref{defn:logn} and Definition \ref{defn:logn_motivic}, respectively.

\subsection{Residue sequences}
As before let $\iota_D:D\to G$ be a closed subscheme which is \'etale over $S$ and contained in some scheme of torsion points $G[N]$. Of particular interest is the case $D=e(S)$.
Recall the localization triangle attached to $j_D:U_D\to X\leftarrow D:\iota_D$.
For any $\sF$ it defines a connecting morphism
\[  R\pi_!Rj_{D*}j_D^*\sF[-1]\to R\pi_!\iota_{D!}\iota_D^!\sF=\pi_{D!}\iota_D^!\sF\ .\]
We apply this to $\sF=\Log{n}(d)[2d]$. This is unipotent, so by Lemma
\ref{lem:unipotent_uppershriek}, we may replace $\iota_D^!$ by $\iota_D^*$.
Moreover, recall the sheaf theoretic and motivic splitting principles \ref{cor:splitting_sheaf} and Lemma \ref{lem:log_in_torsion}, respectively. Together
we have a canonical identification
\[ \pi_{D!}\iota_D^!\Log{n}(d)[2d]\isom \bigoplus_{i=0}^n\pi_{D!}\Sym^i\pi_D^*\Hh.\]

\begin{defn}\label{defn:res_motivic}
The composition of the  above morphisms
\[ R\pi_!Rj_{D*}j_D^*\Log{n}(d)[2d-1]\to \pi_{D!}\iota_D^!\Log{n}(d)[2d]=\bigoplus_{i=0}^n\pi_{D!}\Sym^i\pi_D^*\Hh\]
is called {\em residue map at $D$}.
\end{defn}

The residue triangle also induces a connecting homomorphism, also called residue map,
\[  \Ext_S^{2d-1}(\sF,R\pi_!Rj_{D*}j_D^*\Log{n}(d))
\to \Hom_S(\sF,\bigoplus_{i=0}^n\pi_{D!}\Sym^i\pi_D^*\Hh).\]

\begin{lemma} [Functoriality]\label{lem:res_functorial}The residue map is functorial. More precisely, let
$\phi:G_1\to G_2$ be a morphism of smooth group schemes with connected fibres
over $S$. Let $D_1\subset G_1$ and $D_2\subset G_2$ be closed subschemes
\'etale over $S$ such that $\phi(D_1)\subset D_2$. Then the morphism 
\[ \phi_\#:\Log{n}_{G_1}(d_1)[2d_1]\to\varphi^{!}\Log{n}_{G_2}(d_2)[2d_2]\]
of Theorem~\ref{thm:functoriality} and Lemma \ref{log-funct}, respectively, induces a morphism of exact triangles
\[\begin{CD}
 R\phi_!\Log{n}_{G_1}(d_1)[2d_1-1]@>>>R\phi_!Rj_{D_1*}j_{D_1}^{*}\Log{n}_{G_1}(d_1)[2d_1-1]@>>>  R\phi_!\iota_{D_1!}\bigoplus_{i=0}^n\Sym^i\pi_{D_1}^*\Hh_{G_1}\\
@VVV@V\varphi_\#VV@VVV\\
 \Log{n}_{G_2}(d_2)[2d_2-1]@>>>Rj_{D_2*}j_{D_2}^{*}\Log{n}_{G_2}(d_2)[2d_2-1]@>>>  \iota_{D_2!}\bigoplus_{i=0}^n\Sym^i\pi_{D_2}^*\Hh_{G_2}
\end{CD}\]
\end{lemma}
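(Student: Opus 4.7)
The top and bottom rows of the diagram are the localization triangles for the pairs $(j_{D_i},\iota_{D_i})$ applied to $\Log{n}_{G_i}(d_i)[2d_i]$, shifted by $[-1]$. On $G_i$ the localization triangle reads
\[
\iota_{D_i!}\iota_{D_i}^!\Log{n}_{G_i}(d_i)[2d_i] \to \Log{n}_{G_i}(d_i)[2d_i] \to Rj_{D_i*}j_{D_i}^*\Log{n}_{G_i}(d_i)[2d_i]\xrightarrow{+1},
\]
and Lemma~\ref{lem:unipotent_uppershriek} together with the splitting principles (Corollary~\ref{cor:splitting_sheaf}, Corollary~\ref{lem:log_in_torsion}) identifies the first term with $\iota_{D_i!}\bigoplus_{k=0}^{n}\Sym^k\pi_{D_i}^{*}\Hh_{G_i}$. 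The top row of the diagram is then the image of the $G_1$-triangle under $R\phi_!$, using $R\phi_!\iota_{D_1!}=\iota_{D_2!}\phi_{D!}$, which comes from the factorization $\phi\circ\iota_{D_1}=\iota_{D_2}\circ\phi_D$ provided by the hypothesis $\phi(D_1)\subset D_2$.

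All three vertical arrows should come from the single functoriality morphism $\phi_\#\colon \Log{n}_{G_1}(d_1)[2d_1]\to \phi^{!}\Log{n}_{G_2}(d_2)[2d_2]$ obtained by Tate twisting and shifting Theorem~\ref{thm:functoriality} (resp.\ Lemma~\ref{log-funct} in the motivic case). The left arrow is the $(R\phi_!,\phi^!)$-adjoint of $\phi_\#$. The middle arrow combines $\phi_\#$ with the base-change transformation $R\phi_!Rj_{D_1*}\to Rj_{D_2*}R\phi_{U!}$ for the square formed by $\phi,\phi_U$ and the open immersions, where $\phi_U\colon U_{D_1}\to U_{D_2}$ is the restriction of $\phi$. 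The right arrow comes from applying $\iota_{D_1}^{*}$ to $\phi_\#$: by Theorem~\ref{thm:functoriality} combined with the splitting principle, $\iota_{D_1}^{*}\phi_\#$ is the direct sum of the maps induced by $\Sym^k\phi_!$ on the homology, and applying $R\phi_!\iota_{D_1!}=\iota_{D_2!}\phi_{D!}$ together with the trace map $\phi_{D!}\phi_D^{*}\to\id$ (available since $\phi_D$ is finite \'etale between finite \'etale $S$-schemes) produces the desired arrow on $G_2$.

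Commutativity of the two squares then follows from the naturality of the localization triangle under the cube of schemes and immersions, combined with the compatibility of $\phi_\#$ with the standard natural transformations and adjunctions of the 6-functor formalism. Since all three vertical arrows originate from the single morphism $\phi_\#$, the required compatibility is encoded in the naturality of the six functors applied to the cube, so the left square follows from the naturality of the $(R\phi_!,\phi^!)$-adjunction and the right square from the compatibility of the trace map with the splitting principle.

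The main technical obstacle lies in the middle vertical arrow: for $\phi_U\colon U_{D_1}\to U_{D_2}$ to exist, and a fortiori for the defining square to be Cartesian (so that the base-change transformation above makes sense as an isomorphism of functors), one needs $\phi^{-1}(D_2)\subset D_1$ in addition to the stated inclusion $\phi(D_1)\subset D_2$, forcing the equality $D_1=\phi^{-1}(D_2)$. This holds automatically in the intended applications (for instance when $\phi$ is an isogeny and $D_i$ are torsion subgroup schemes), and once granted, the remaining verifications are a routine unwinding of the 6-functor formalism, identical in the sheaf theoretic and motivic settings.
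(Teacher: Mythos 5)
Your construction of the middle vertical arrow has a genuine gap, and your proposed remedy --- demanding $\phi^{-1}(D_2)\subset D_1$, hence $D_1=\phi^{-1}(D_2)$ --- changes the statement and fails exactly where the lemma is needed. The hypothesis $\phi(D_1)\subset D_2$ only gives $D_1\subset\phi^{-1}(D_2)$, so in general there is no morphism $U_{D_1}\to U_{D_2}$ at all. The paper invokes the lemma precisely in such situations: with $\phi=[a]$ and $D_1=D_2=e(S)$ one has $[a]^{-1}(e(S))=G[a]\neq e(S)$ for $a\neq 0,\pm 1$, and with $D_1=D_2=D\subset G[N]$, $a\equiv 1\bmod N$, one has $[a]^{-1}(D)\supsetneq D$; these are the cases used to put the $[a]$-linear operation on the residue triangle (Propositions \ref{prop:decompose_for_e} and \ref{prop:decompose_for_N}) and hence to run the second proof of Theorems \ref{polylog_all} and \ref{thm:polylog_divisor}. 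Even for isogenies and torsion subschemes the equality fails, e.g.\ $[a]^{-1}(G[N])=G[aN]$, so your claim that the extra condition ``holds automatically in the intended applications'' is not correct.

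The missing idea is to restrict further instead of comparing $U_{D_1}$ with $U_{D_2}$ directly. Since $D_1\subset\phi^{-1}(D_2)$, the open subscheme $U_{\phi^{-1}D_2}:=G_1\setminus\phi^{-1}(D_2)=\phi^{-1}(U_{D_2})$ is contained in $U_{D_1}$, giving a canonical morphism of functors $Rj_{D_1*}j_{D_1}^*\to Rj_{\phi^{-1}D_2*}j_{\phi^{-1}D_2}^*$ on $G_1$. Applying this after $\phi_\#$ one gets $Rj_{D_1*}j_{D_1}^*\Log{n}_{G_1}\to Rj_{D_1*}j_{D_1}^*\phi^!\Log{n}_{G_2}(c)[2c]\to Rj_{\phi^{-1}D_2*}j_{\phi^{-1}D_2}^*\phi^!\Log{n}_{G_2}(c)[2c]$; the square formed by $U_{\phi^{-1}D_2}\to G_1$ and $U_{D_2}\to G_2$ is Cartesian by construction, so base change identifies the last term with $\phi^!Rj_{D_2*}j_{D_2}^*\Log{n}_{G_2}(c)[2c]$, and the counit of $(R\phi_!,\phi^!)$ then yields the middle vertical arrow with no hypothesis beyond $\phi(D_1)\subset D_2$. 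The same device on the closed side, where $\iota_{D_1}$ factors through the closed subscheme $\phi^{-1}(D_2)$ whose square with $D_2$ is Cartesian, produces the right-hand arrow; since all three vertical maps arise by applying natural transformations (the morphism of localization triangles for the nested closed subschemes $D_1\subset\phi^{-1}(D_2)$, base change, and adjunction) to the single morphism $\phi_\#$, the two squares commute, and the splitting principle is only used at the very end to identify the third entries. Your description of the left arrow, of the role of $\phi_\#$, and of the identification of the closed terms is otherwise consistent with the paper's argument; it is the middle step that must be rebuilt as above.
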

\begin{proof}Let $c$ be the relative dimension of $G_1$ over $G_2$ and denote by $U_{D_i}$ the complement of $D_i$ and
by $U_{\varphi^{-1}D_2}\subset U_{D_1}$ the complement of 
$\varphi^{-1}D_2$.
We apply $j_{D_1*}j_{D_1}^*$ to $\phi_\#$ and restrict
to $U_{\varphi^{-1}D_2}$and obtain
\[ j_{D_1*}j_{D_1}^*\Log{n}\to j_{D_1*}j_{D_1}^*\phi^!\Log{n}(c)[2c]\to j_{\phi^{-1}D_2*}j_{\phi^{-1}D_2}^*\phi^!\Log{n}(c)[2c].\]
We have a cartesian square
\[\begin{CD}
U_{\varphi^{-1}D_2}@>j_{\phi^{-1}D_2}>> G_1\\
@V\phi VV@VV\phi V\\
U_{D_2}@>j_{D_2}>> G_2
\end{CD}\] 
which implies $j_{D_2}^*\phi^!=\phi^!j_{\varphi^{-1}D_2}^*$. Together with the 
base change $Rj_{\phi^{-1}D_2*}\phi^!=\phi^!Rj_{D_2*}$  this gives a map
\[ Rj_{D_1*}j_{D_1}^*\Log{n}\to \phi^!Rj_{D_2*}j_{D_2}^{*}\Log{n}(c)[2c]\]
or equivalently
\[  R\phi_!Rj_{D_1*}j_{D_1}^*\Log{n}(d_1)[2d_1-1]\to Rj_{D_2*}j_{D_2}^*\Log{n}(d_2)[2d_2-1]\]
The analogous argument for $\iota_{D_1!}\iota_{D_1}^!$ gives
\[ R\phi_!\iota_{D_1!}\iota_{D_1}^!\Log{n}(d_1)[2d_1]\to \iota_{D_2!}j\iota_{D_1}^!\Log{n}(d_2)[2d_2].\]
This defines a morphism of exact triangles. We now apply
the identification via the splitting principle on $D_1$ and $D_2$.
\end{proof}

\subsection{The main result}
We formulate all results on polylog in two big statements. We keep the notation and the setting of Section \ref{sec:setting}. 
\begin{theorem}[Polylog with respect to the unit section]\label{polylog_all}
Let $S$ be a base scheme satisfying the assumptions
of the respective setting, see Section \ref{sec:setting}. Let
$G/S$ be a smooth commutative $S$-group scheme with connected fibres of dimension $d$.
\begin{enumerate}
\item There is a unique system of classes
\[ \pol{n}\in\Ext^{2d-1}_S(\Hh_G, R\pi_!Rj_*j^*\Log{n}_G(d))\]
such that
\begin{enumerate}
\item their residue in $e^!\Log{n}_G(d)[2d]\isom \bigoplus_{i=0}^n\Sym^i\Hh_G$ is
the natural inclusion of $\Hh_{G}$;
\item they are compatible under the transition maps $\Log{n+1}_G\to\Log{n}_G$;
\item they are functorial with respect to homomorphisms of
groups schemes $\varphi:G_1\to G_2$, i.e., the diagrams
\[\begin{CD}
     \Hh_{G_1}@>\pol{n}_{G_1}>> R\pi_{1!}Rj_{1*}j_1^*\Log{n}_{G_1}(d_1)[2d_1-1]\\
     @V\varphi_!  VV @VV\varphi_\# \text{\ (\ref{lem:res_functorial})} V\\
 \Hh_{G_2}@>\pol{n}_{G_2}>> R\pi_{2!}Rj_{2*}j_2^*\Log{n}_{G_2}(d_2)[2d_2-1]\\
\end{CD}\]
commute.
\end{enumerate}
\item The classes $\pol{n}$ are contravariantly functorial
 under morphisms $S'\to S$.
\item If $\ell$ is invertible on $S$ which is of finite type over a regular scheme of dimension $0$ or $1$, then the motivic class is mapped to
the $\ell$-adic class by the $\ell$-adic realization functor $R_\ell$.
\item If $S$ is of finite type over $\C$, then the motivic class is mapped to
the analytic class by the Betti-realization functor $R_B$.
\end{enumerate}
\end{theorem}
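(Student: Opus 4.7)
The plan is to reduce both existence and uniqueness of $\pol{n}$ to the vanishing theorems (Theorem \ref{thm:vanishing} in the sheaf-theoretic setting, Theorem \ref{thm:vanishing-motivic} in the motivic one), and then to deduce every compatibility---transition in $n$, functoriality in $G$ and $S$, and realizations---by showing that both sides of the relevant diagram are lifts of the same residue and invoking uniqueness.

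\emph{Existence and uniqueness.} Apply $R\pi_!$ to the localization triangle
\[
e_*e^!\Log{n}_G(d)\to\Log{n}_G(d)\to Rj_*j^*\Log{n}_G(d).
\]
Since $\pi\circ e=\id_S$ we have $R\pi_!e_*e^!=e^!$, and combining the splitting principle (Corollary \ref{cor:splitting_sheaf}, respectively Corollary \ref{lem:log_in_torsion}) with Lemma \ref{lem:unipotent_uppershriek} identifies $e^!\Log{n}_G(d)[2d]\cong\bigoplus_{i=0}^n\Sym^i\Hh_G$. After a shift by $[2d-1]$, the resulting distinguished triangle contains the segment
\[
R\pi_!\Log{n}_G(d)[2d-1]\to R\pi_!Rj_*j^*\Log{n}_G(d)[2d-1]\xrightarrow{\res}\bigoplus_{i=0}^n\Sym^i\Hh_G\to R\pi_!\Log{n}_G(d)[2d].
\]
By the vanishing theorem the outer terms degenerate to $\bQ[-1]$ and $\bQ$, with the rightmost map identified with the projection onto the $\Sym^0$-summand. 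Applying $\Hom_S(\Hh_G,-)$, using $\Hom_S(\Hh_G,\bQ[-1])=0$, and noting that the composition $\Hh_G\hookrightarrow\bigoplus_i\Sym^i\Hh_G\to\bQ$ vanishes for degree reasons (the inclusion lands in the $\Sym^1$-summand), I obtain a unique class $\pol{n}$ lifting the natural inclusion. This gives part (1)(a) together with the uniqueness clause.

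\emph{Compatibilities.} Part (1)(b): the transition $\Log{n+1}\to\Log{n}$ induces a morphism of residue triangles which carries $\pol{n+1}$ to a class with the correct residue, hence to $\pol{n}$ by uniqueness. Part (1)(c): Lemma \ref{lem:res_functorial} supplies a morphism of residue triangles associated to $\varphi$; both $\varphi_\#\circ\pol{n}_{G_1}$ and $\pol{n}_{G_2}\circ\varphi_!$ are then lifts of the same inclusion $\Hh_{G_1}\xrightarrow{\varphi_!}\Hh_{G_2}\hookrightarrow\bigoplus_i\Sym^i\Hh_{G_2}$, and the vanishing $\Hom_S(\Hh_{G_1},\bQ[-1])=0$ forces them to agree. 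Part (2): pullback along $f\colon S'\to S$ commutes with $R\pi_!$, $Rj_*$, $e^!$ and with the splitting principle, so $f^*\pol{n}_G$ has the correct residue on $G_{S'}$ and must equal $\pol{n}_{G_{S'}}$. Parts (3) and (4) follow by the same recipe once one invokes compatibility of the realizations with the six functors together with Proposition \ref{prop:real}, which ensures that motivic $\Log{n}$ maps to the sheaf-theoretic $\Log{n}$ under both $R_\ell$ and $R_B$; the image of the motivic $\pol{n}$ therefore lifts the natural inclusion in the sheaf setting and coincides with the sheaf-theoretic $\pol{n}$ by uniqueness there.

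\emph{Main obstacle.} Granted the vanishing theorems, the construction and all its naturality properties are essentially formal, so the substantive work is deferred to Section \ref{proof:vanishing}. The only wrinkle specific to the motivic setting is that Theorem \ref{thm:vanishing-motivic} is proved only when $S$ is of characteristic zero or $G/S$ is affine; outside this range the direct residue argument would have to be replaced by the generalized eigenspace decomposition of $R\pi_!\cLog$ under multiplication-by-$a$ alluded to in the introduction, a point which would also force a reformulation of the uniqueness clause.
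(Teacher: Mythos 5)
Your overall strategy (reduce everything to the vanishing theorem, then get all compatibilities by comparing residues and invoking uniqueness) is the paper's first proof, but there is a genuine gap in how you set it up: you apply the vanishing theorem at each finite level $n$. Theorem \ref{thm:vanishing} (and Theorem \ref{thm:vanishing-motivic}) is a statement about the pro-object $\cLog$, not about the individual $\Log{n}$. By Proposition \ref{Log-cohomology}, $R^i\pi_!\Log{n}(d)$ is \emph{not} concentrated in degree $2d$: for $0<i<2d$ it is a cokernel of a Koszul differential, and for $i=0$ it is $R^0\pi_!\bQ(d)\otimes\Sym^n\Hh$; already for an elliptic curve and $n=1$ these are nonzero. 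The vanishing only appears in the limit because the transition maps annihilate these groups. Consequently your claim that in the triangle
\[
R\pi_!\Log{n}(d)[2d-1]\to R\pi_!Rj_*j^*\Log{n}(d)[2d-1]\xrightarrow{\res}\bigoplus_{i=0}^n\Sym^i\Hh\to R\pi_!\Log{n}(d)[2d]
\]
``the outer terms degenerate to $\bQ[-1]$ and $\bQ$'' is false at finite $n$, and with it both the existence argument (the obstruction to lifting does not reduce to the composition into $\bQ$) and the asserted uniqueness of each individual $\pol{n}$ by its residue alone. Note that the theorem itself does not claim such pointwise uniqueness: it asserts uniqueness of the \emph{system} satisfying (a), (b), (c).

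The repair is exactly what the paper does: work with the pro-object throughout. Proposition \ref{prop:ext-comp} gives, for $\sF=\Hh$ or $\bQ$, the exact sequence
\[
0\to \Ext^{2d-1}_{S}(\sF,R\pi_!Rj_{*}j^{*}\cLog(d))\xrightarrow{\res}\Hom_S\bigl(\sF,\textstyle\prod_{i\ge 0}\Sym^i\Hh\bigr)\to \Hom_S(\sF,\bQ),
\]
where injectivity of $\res$ uses $\Hom_S(\sF,\bQ[-1])=0$ --- trivial in the sheaf setting, but in the motivic setting this requires the argument via \cite{ayoubet} and the direct-summand trick for $M_1(G)$, which you should not pass over silently. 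One then defines $\cpol$ as the preimage of the inclusion $\Hh\hookrightarrow\prod_i\Sym^i\Hh$, obtaining the compatible system $(\pol{n})_n$ at once; your residue-comparison arguments for (1)(c), (2), (3), (4) then go through verbatim at the pro-level, which is also how the paper runs them. Finally, your closing remark is slightly off: outside characteristic $0$ or the affine case the paper's second proof (eigenspace decomposition under $[a]$) establishes the theorem \emph{as stated}; the $[a]$-functoriality needed for the eigenspace characterization is already contained in clause (c), so no reformulation of the uniqueness clause is required.
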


Let $D\subset G$ be a closed subscheme which is \'etale over $S$ and contained in $G[N]$ for some
$N$.
\begin{defn}
Let
\[ 
\bQ[D]^0:=\ker\left(H^0(S,\pi_{D!}\bQ)\to H^0(S,\bQ)\right),
\]
where $\pi_{D!}\bQ\to \bQ$ is the trace map.
\end{defn}
This should be thought of as $\bQ$-valued functions $f$ on $D$ with
$\sum_{d\in D}f(d)=0$, which is literally true in the case where $D$ is
a disjoint set of sections. 

Note that by the isomorphism $\pi_{D!}\iota_D^!\Log{n}_G(d)[2d]\isom \pi_{D!}\bigoplus_{i=0}^n\Sym^i\Hh_G$ induced by the splitting principle, one has an inclusion 
\[
\bQ[D]^{0}\subset \ker\left( H^{0}(S, \pi_{D!}\iota_D^!\Log{n}_G)\to H^{0}(S,\bQ)\right).
\]
Let $\varphi:G_1\to G_2$ is a homomorphism of smooth group schemes with connected fibres, $D_1\subset G_1$ and $D_2\subset G_2$ as above such that $\phi(D_1)\subset D_2$. Then the trace map also
induces
\[ \phi_!:\bQ[D_1]^{0}\to \bQ[D_2]^{0}.\]

\begin{theorem}[Polylog with respect to a subscheme]\label{thm:polylog_divisor}
Let $S$ be a base scheme satisfying the assumptions
of the respective setting, see Section \ref{sec:setting}. Let
$G/S$ be a smooth $S$-group scheme with connected fibres of dimension $d$.
Let $D\subset G$ be a closed subscheme which is \'etale over $S$ and contained in $G[N]$ for some
$N$ and \'etale.
Let $\alpha\in \bQ[D]^0$.
\begin{enumerate}
\item There is a unique system of classes
\[\pol{n}_\alpha\in\Ext^{2d-1}(\bQ,R\pi_!j_{D*}j_D^*\Log{n}(d))\]
such that
\begin{enumerate}
\item their residue in $\ker\left( H^{0}(S, \pi_{D!}\iota_D^!\Log{n}_G)\to H^{0}(S,\bQ)\right)$
is given by $\alpha$;
\item they are compatible under the transition maps 
$\Log{n+1}_G\to\Log{n}_G$;
\item they are functorial with respect to  homomorphism of group schemes
$\varphi:G_1\to G_2$ mapping $D_1\subset G_1$ into $D_2$, i.e., the class $\pol{n}_\alpha$ is mapped to
$\pol{n}_{\phi_!\alpha}$ under the map 
\[ \phi_\#:\Ext^{2d_1-1}_S(\bQ,R\pi_{1!}j_{D*}j_D^*\Log{n}_{G_1}(d_1))\to \Ext_S^{2d_2-1}(\bQ,R\pi_{2!}j_{\phi{D}*}j_{\phi(D)}^*\Log{n}_{G_2}(d_2))\]
induced from Lemma \ref{lem:res_functorial}.
\end{enumerate}
\item The classes $\pol{n}_\alpha$ are contravariantly functorial
 under morphisms $S'\to S$.
\item If $\ell$ is invertible on $S$ which is of finite type over a regular scheme of dimension $0$ or $1$, then the motivic class is mapped to
the $\ell$-adic class by the $\ell$-adic realization functor $R_\ell$.
\item If $S$ is of finite typer over $\C$, then the motivic class is mapped to
the analytic class by the Betti-realization functor $R_B$.
\end{enumerate}
\end{theorem}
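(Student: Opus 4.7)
The strategy is to follow the construction of $\pol{n}$ in Theorem~\ref{polylog_all}, with the unit section replaced by $D$ and the Tate module $\Hh$ replaced by degree-zero functions on $D$. The main ingredients are again the vanishing theorem for $R\pi_!\cLog$ and the splitting principle on $D$.

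First I apply $R\pi_!$ to the localization triangle
\[
\iota_{D*}\iota_D^{!}\Log{n}(d)[2d]\to \Log{n}(d)[2d]\to Rj_{D*}j_D^{*}\Log{n}(d)[2d],
\]
use Theorem~\ref{thm:vanishing} (respectively Theorem~\ref{thm:vanishing-motivic}) to identify $R\pi_!\Log{n}(d)[2d]$ with $\bQ$ in the pro-limit, and invoke the splitting principle (Corollary~\ref{cor:splitting_sheaf}, respectively Corollary~\ref{lem:log_in_torsion}) together with Lemma~\ref{lem:unipotent_uppershriek} to identify $\iota_D^{!}\Log{n}(d)[2d]\isom \bigoplus_{i=0}^{n}\Sym^{i}\pi_D^{*}\Hh$. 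Rotating produces the distinguished triangle
\[
\bQ[-1]\to R\pi_!Rj_{D*}j_D^{*}\Log{n}(d)[2d-1]\xrightarrow{\res}\bigoplus_{i=0}^{n}\pi_{D!}\Sym^{i}\pi_D^{*}\Hh\xrightarrow{\delta}\bQ.
\]
The connecting map $\delta$ is identified as follows: the trace $R\pi_!\Log{n}(d)[2d]\to\bQ$ factors through the augmentation $\Log{n}\to\bQ$ and the standard trace $R\pi_!\bQ(d)[2d]\to\bQ$ coming from smoothness of $\pi$, and the splitting principle respects this augmentation, so $\delta$ vanishes on the summands $i\geq 1$ and restricts to the \'etale trace $\pi_{D!}\bQ\to\bQ$ on the $i=0$ summand.

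Applying $\Hom(\bQ,-)$ yields the long exact sequence
\[
H^{-1}(S,\bQ)\to\Ext^{2d-1}\bigl(\bQ,R\pi_!Rj_{D*}j_D^{*}\Log{n}(d)\bigr)\xrightarrow{\res_*} H^{0}\bigl(S,\textstyle\bigoplus_{i}\pi_{D!}\Sym^{i}\pi_D^{*}\Hh\bigr)\xrightarrow{\delta_*} H^{0}(S,\bQ).
\]
Since $\bQ[D]^{0}=\ker(H^{0}(S,\pi_{D!}\bQ)\to H^{0}(S,\bQ))$ by definition, any $\alpha\in\bQ[D]^{0}$ lies in $\ker\delta_*$ and hence admits a lift $\pol{n}_\alpha$ with residue $\alpha$, settling existence and (a). Compatibility (b) is built into the construction by naturality of the triangle in $n$. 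Functoriality (c) under $\varphi:G_1\to G_2$ with $\varphi(D_1)\subset D_2$ is provided by Lemma~\ref{lem:res_functorial}, which supplies a morphism between the residue triangles; comparing residues, $\res_*(\varphi_{\#}\pol{n}_\alpha)=\varphi_!\alpha=\res_*(\pol{n}_{\varphi_!\alpha})$, so uniqueness of the lift forces $\varphi_{\#}\pol{n}_\alpha=\pol{n}_{\varphi_!\alpha}$. Base change under $S'\to S$ and compatibility with the $\ell$-adic and Betti realizations follow from the naturality of the whole construction and Proposition~\ref{prop:real}, exactly as in Theorem~\ref{polylog_all}.

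The main obstacle is the uniqueness of $\pol{n}_\alpha$, which the above sequence only guarantees modulo the image of $H^{-1}(S,\bQ)$. In the sheaf theoretic settings this group vanishes and uniqueness is automatic. In the motivic setting, where $H^{-1}(S,\bQ)$ need not vanish, I proceed as for $\pol{n}$ in Theorem~\ref{polylog_all}, invoking the generalized eigenspace decomposition from the appendix: functoriality under the isogeny $[N]:G\to G$ (which sends $D\subset G[N]$ to the unit section), together with the computation of $[N]^{*}$-eigenvalues on both ends of the residue triangle, pins down $\pol{n}_\alpha$ in the distinguished eigenspace and hence uniquely among all lifts of $\alpha$.
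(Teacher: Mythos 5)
Your construction reproduces the paper's first proof and is essentially correct as far as it goes: applying $R\pi_!$ to the localization triangle for $D$, using the vanishing theorem for the pro-object $\cLog$ and the splitting principle to identify the residue term, identifying the connecting map as augmentation followed by the trace $\pi_{D!}\bQ\to\bQ$, and lifting $\alpha\in\bQ[D]^0$; functoriality and realization compatibility then follow from uniqueness exactly as you say. (Strictly speaking your displayed triangle is only correct for the pro-system $\cLog$, not for an individual $\Log{n}$, since $R\pi_!\Log{n}(d)[2d]\neq\bQ$ at finite level; you acknowledge this, but the uniqueness statement should be phrased, as in the theorem, for the compatible system.) Two points, however, are genuine problems. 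First, your existence argument needs the vanishing theorem, which in the motivic setting is only available in characteristic $0$ or for $G/S$ affine (Theorem \ref{thm:vanishing-motivic}); the theorem as stated covers the general motivic setting, and there the paper's second proof replaces the vanishing theorem entirely by the generalized eigenspace decomposition of $R\pi_!Rj_{D*}j_D^*\Log{n}(d)$ (Proposition \ref{prop:decompose_for_N}), which yields both existence and uniqueness. Your proposal invokes eigenspaces only for uniqueness, so existence in the general motivic case is not established.

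Second, the eigenspace mechanism you describe does not work as stated: the isogeny $[N]$ maps $D\subset G[N]$ to the unit section, so it does \emph{not} induce an endomorphism of $R\pi_!Rj_{D*}j_D^*\Log{n}$, and there are no ``$[N]^*$-eigenvalues'' on that object. What is needed is an integer $a\neq 0,\pm 1$ with $[a]D\subset D$ (e.g.\ $a\equiv 1\bmod N$), giving via Lemma \ref{lem:res_functorial} an $[a]$-linear operation on the residue triangle for $D$; the class $\pol{n}_\alpha$ is then pinned down as the preimage of $\alpha$ lying in the generalized $a^0$-eigenspace. Finally, your motivation for the detour is misplaced in the restricted setting: the group you call $H^{-1}(S,\bQ)$ is $\Hom_{\DA(S)}(\bQ,\bQ[-1])$, which does vanish by Ayoub's result cited in Proposition \ref{prop:ext-comp}, so where your existence argument applies, uniqueness already follows without any eigenspace argument.
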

\begin{rem}The proof of the theorems are nearly identical and will be given together. 
We are going to give two different arguments: 
\begin{itemize}
\item The first proof uses the cohomological vanishing of Theorem \ref{thm:vanishing}. It has the advantage
of being quick and direct.
The argument is valid in the sheaf theoretic setting and relies on the fact that the polylogarithm classes
for $G$ are uniquely determined by their residues and compatibility
with respect to $n$. It also applies in the motivic setting under the more
restrictive assumptions of Theorem \ref{thm:vanishing-motivic}. \item
The second proof is valid in any setting and relies on the fact that the polylogarithm classes for $G$ are uniquely determined by their residues and uses the functoriality with respect to
multiplication $[a]:G\to G$ for a single $a\in\Z, a\neq 0, \pm 1$ (satisfying 
$[a]^*D\subset D$ in the case of polylog with respect to a divisor).
Indeed, they are going to be characterized as the unique 
preimages of their residues on which $[a]$ operates by multiplication
by 
$a^1$ and $a^0$, respectively.
\end{itemize}
\end{rem}

\begin{rem}The argument for compatibility with realizations will
also apply in Hodge theoretic setting once a Hodge realization
functor compatible with the six functor formalism is constructed.
This is not yet the case, see the discussion at the end of Section \ref{setting_realizations} for the state of the art.
\end{rem}

\begin{rem}In the simplest case $G=\Gm$, the above class is not the same
as the one in the literature, but rather maps to it.
See Section \ref{sec:with_*}  for the precise relation.
\end{rem}

\subsection{First proof}
We work in the sheaf theoretic setting. The same arguments also
apply in the motivic setting if the characteristic is $0$ or if $G/S$
is affine.

Recall that by Theorem \ref{thm:vanishing} and Theorem \ref{thm:vanishing-motivic}, respectively, we have
\[ R\pi_!\cLog(d)[2d]=\bQ.\]

\begin{prop}\label{prop:ext-comp} 
We work either in the sheaf theoretic setting or
the motivic setting with $S$ of characteristic $0$ or $G/S$ affine.
Let $\sF=\sH$ or $\sF=\bQ$.  
There is an exact sequence
\[ 0\to \Ext^{2d-1}_{S}(\sF,R\pi_!Rj_{D*}j_D^{*}\cLog(d))\xrightarrow{\res}
\Hom_S(\sF, \pi_{D!}\iota_D^*\cLog)\to \Hom_S(\sF,\bQ).
\]
where the last map is the composition of  the augmentation 
$\pi_{D!}\iota_D^*\cLog\to\pi_{D!}\iota_D^* \bQ$ and the 
the trace map $\pi_{D!}\iota_D^*\bQ\to \bQ$.
\end{prop}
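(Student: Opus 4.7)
The strategy will be to exhibit the claimed four-term sequence as a portion of the long exact $\Hom_S(\sF,-)$ sequence attached to the localization triangle for the pair $(j_D,\iota_D)$, after pushing forward by $R\pi_!$ and invoking the vanishing theorem.

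The starting point is the localization triangle
\[\iota_{D!}\iota_D^!\cLog(d)\to\cLog(d)\to Rj_{D*}j_D^*\cLog(d)\xrightarrow{+1}\]
in $\D(G)$. The plan is to apply $R\pi_!$ and shift by $[2d]$. Three simplifications enter: (i) $R\pi_!\iota_{D!}=\pi_{D!}$, since $\pi\iota_D=\pi_D$; (ii) Lemma \ref{lem:unipotent_uppershriek} gives $\iota_D^!\cLog(d)[2d]\isom\iota_D^*\cLog$, applicable because $\cLog$ is unipotent and $D,G$ are smooth over $S$ of relative dimensions $0$ and $d$; and (iii) the vanishing theorems (Theorem \ref{thm:vanishing} in the sheaf setting, Theorem \ref{thm:vanishing-motivic} in the motivic setting under its hypotheses) identify $R\pi_!\cLog(d)[2d]$ with $\bQ$. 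After these substitutions the triangle takes the form
\[\pi_{D!}\iota_D^*\cLog\xrightarrow{\delta}\bQ\to R\pi_!Rj_{D*}j_D^*\cLog(d)[2d]\xrightarrow{+1}\ .\]

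The delicate step will be to identify $\delta$ with the composition of the augmentation $\pi_{D!}\iota_D^*\cLog\to\pi_{D!}\bQ$ (coming from $\cLog\to\bQ$) and the trace $\pi_{D!}\bQ\to\bQ$. For this I plan to invoke naturality of the localization triangle along the augmentation, which produces a commutative square whose bottom row is the Gysin map $\pi_{D!}\bQ\to R\pi_!\bQ(d)[2d]$ and whose right column is the map $\bQ=R\pi_!\cLog(d)[2d]\to R\pi_!\bQ(d)[2d]$ induced by augmentation. Post-composing the right column with the trace $R\pi_!\bQ(d)[2d]\to\bQ$ returns the identity on $\bQ$ (this is precisely how the vanishing isomorphism is normalized, via augmentation into the top compactly-supported cohomology), while the same post-composition along the bottom row yields the trace $\pi_{D!}\bQ\to\bQ$ by transitivity of traces under $\pi_D=\pi\iota_D$; a diagram chase then identifies $\delta$.

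The remaining step is to apply $R\Hom_S(\sF,-)$ to the shifted triangle and extract the asserted exact sequence. Injectivity of $\res$ reduces to the vanishing $\Hom_S(\sF,\bQ[-1])=0$ for $\sF\in\{\bQ,\Hh\}$. This is immediate in any sheaf theoretic setting, where $\sF$ and $\bQ$ are concentrated in degree zero. In the motivic setting, $\Hom_{\DA(S)}(\bQ,\bQ[-1])=H^{-1,0}(S)=0$, while for $\sF=\Hh=M_1(G)[-1]$ one needs $\Hom_{\DA(S)}(M_1(G),\bQ)=0$; this follows because $\Hom_{\DA(S)}(M_S(G),\bQ)=H^0(G,\bQ)=\bQ$ is generated by the structure map $G\to S$, which factors through the summand $M_0(G)=\bQ$ of the decomposition of \cite{AHP}, so the $M_1(G)$ piece contributes nothing. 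The real work of the argument sits in the second step, where the normalization of the vanishing isomorphism must be carefully tracked.
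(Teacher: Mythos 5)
Your proof is correct and follows essentially the same route as the paper: apply $R\pi_!$ and $\Hom_S(\sF,-)$ to the localization triangle for $(j_D,\iota_D)$, use the vanishing theorem to identify $R\pi_!\cLog(d)[2d]\isom\bQ$, and reduce injectivity of $\res$ to $\Hom_S(\sF,\bQ[-1])=0$, handled in the motivic case for $\sF=\sH$ by exactly the same argument via $\Hom_S(M_1(G),\bQ)\subset\Hom_S(M_S(G),\bQ)=\Hom_G(\bQ,\bQ)=\bQ$ and the $M_0(G)$ summand. Your careful identification of the connecting map with augmentation followed by the trace is a detail the paper asserts in the statement but leaves implicit in its proof, and your normalization argument via the Gysin map and transitivity of traces is the right justification.
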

\begin{proof}
We apply $R\pi_!$ and $\Hom_S(\sF,-)$ to the localization triangle
and using the computation of $R\pi_!\cLog(d)[2d]$. 

It remains to show that $\Hom_S(\sF, \bQ)$ vanishes for
$\sF=\sH$ and $\sF=\bQ$. This is clear in the sheaf theoretic setting because
negative Ext-groups vanish.

We now turn to the motivic setting. If $\sF=\bQ$,
the vanishing of $\Hom_S(\bQ,\bQ[-1])$ is \cite[Proposition~11.1]{ayoubet}. 
If $\sF=\sH$, then
\begin{multline*} 
\Hom_S(\sH[1],\bQ)=\Hom_S(M_1(G),\bQ)\subset 
\Hom_S(M_S(G),\bQ)=\Hom_G(\bQ,\bQ)=\bQ\end{multline*}
again by  \cite[Proposition~11.1]{ayoubet}. The morphism
$\Hom_S(\bQ,\bQ)\to\Hom_G(\bQ,\bQ)$ is an isomorphism, hence the direct summand
$\Hom_S(M_1(G),\bQ)$ vanishes.
\end{proof}

\begin{proof}[Proof of Theorem \ref{polylog_all} and Theorem \ref{thm:polylog_divisor}.]
We first apply Proposition \ref{prop:ext-comp} with $\sF=\sH$ and $D=e(S)$.
We obtain the exact sequence
\[ 0\to \Ext^{2d-1}_{S}(\sH,R\pi_!Rj_{*}j^{*}\cLog(d))\to
\Hom_S(\sH, \prod_{i=0}^\infty\Sym^i\sH)\to \Hom_S(\sH,\bQ).\]
with the last map induced from the natural projection.
We define 
\[\cpol\in \Ext^{2d-1}_S(\sH,R\pi_!Rj_{*}j^{*}\cLog(d))\]
as the preimage of the natural inclusion of the $\sH$ into
$\prod_{i=0}^\infty\Sym^i\sH$. This means we have defined a
system of elements
\[ \pol{n}\in \Ext^{2d-1}_S(\sH,R\pi_!Rj_{*}j^{*}\Log{n}(d))\]
compatible under transition maps. It is uniquely determined
by these properties.

We now turn to functoriality under $\phi:G_1\to G_2$. By functoriality,
$\cpol_{G_1}$ and $\cpol_{G_2}$ both define elements in
$\Ext_S^{2d_2-1}(\sH_{G_1},R\pi_{2!}Rj_{2*}j_2^*\cLog_{G_2}(d_2))$
with the same residue in 
$\Hom_S(\sH_{G_1},\prod_{i=0}^\infty\Sym^i\sH_{G_2})$. By Proposition
\ref{prop:ext-comp} this implies that they agree.

The  behaviour under realizations
follows from these properties for $\cLog$ (see Proposition \ref{prop:real}) and uniqueness.

In the case of $\cpol_\alpha$, we obtain the sequence
\[ 0\to \Ext^{2d-1}_{S}(\bQ,R\pi_!Rj_{D*}j_D^{*}\cLog(d))\to
\Hom_S(\bQ, \prod_{i=0}^\infty\pi_{D!}\Sym^i\pi_D^*\sH)\to \Hom_S(\bQ,\bQ).\]
By assumption $\alpha$ is in the kernel of the last map. We define
$\cpol_\alpha$ as its preimage. 
All other argument are the same as in the case of pol with respect to
the unit section.
\end{proof}

\subsection{Second proof}
We work in the sheaf theoretic and in the motivic setting in parallel.
The argument relies on analysing the eigenspace decomposition
under the operation of multiplication by $a\in \Z$ on $G$.
Let $[a]:G\to G$ be the morphism on $G$.

Recall
that an $[a]$-linear operation on an object $X\in\D(G)$ is
the datum of a morphism $X\to [a]^!X$ or equivalently $f_a:[a]_!X\to X$.
By naturality it induces a map $\pi_!X=\pi_![a]_!X\xrightarrow{\pi_!f_a}\pi_!X$.

Such an $[a]$-linear operation on $\Log{n}$ was defined in Theorem \ref{thm:functoriality} and Theorem~\ref{log-funct}, respectively.

Recall also from Appendix \ref{appA} the notion of a finite decomposition
into generalized $[a]$-eigenspaces in a $\Q$-linear triangulated category.

\begin{prop}\label{eigenweights}
Let $a\in\Z$. 
\begin{enumerate}
\item
Then $R\pi_!\bQ$ has a finite decomposition into
$a$-eigenspaces
\[ R\pi_!\bQ=\bigoplus_{i=0}^{\kd(G)}\Sym^i\sH(-d)[i-2d]\]
with $a$ operating on $\Sym^{i}\sH$ by multiplication by $a^i$.
\item  Let $n\geq 0$. Under the operation of $[a]$ on the associated graded of $\Log{n}$, the object $R\pi_!\pi^*\Sym^n\Hh$ on $S$ has a finite decomposition into 
$[a]$-eigenspaces with eigenvalues $a^{n},\dots,a^{n+\kd(G)}$.
\item The object $R\pi_!\Log{n}$ on $S$ has a finite decomposition into generalized
$[a]$-eigenspaces with eigenvalues $a^0,\dots, a^{n+\kd(G)}$.
\item For $n\geq 1$ the map $R\pi_!\Log{n}\to R\pi_!\Log{n-1}$ induces an
isomorphism on $a^0$-eigenspaces. In particular, this eigenspace is isomorphic
to $\Q_S(-d)[-2d]$. 
\item For $n\geq 1$, the $a^1$-eigenspace of $R\pi_!\Log{n}$ vanishes.
\end{enumerate}
The decompositions are independent of the choice of $a$.
\end{prop}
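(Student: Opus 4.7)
The plan is to establish (1) directly, and then use the triangles $R\pi_!\pi^*\Sym^n\sH \to R\pi_!\Log{n} \to R\pi_!\Log{n-1}$ coming from Proposition~\ref{log-triangle} together with induction on $n$ to deduce (2)--(5). For (1), in the motivic setting I would invoke the decomposition $M_S(G) = \bigoplus_{i=0}^{\kd(G)} \Sym^i M_1(G)$ from \cite{AHP}, on which $[a]_*$ acts by $a^i$ on the $i$-th summand, and combine it with Poincar\'e duality $R\pi_!\bQ = M_S(G)(-d)[-2d]$ and the identification $M_1(G) = \sH[1]$. In the sheaf-theoretic setting the same decomposition is either classical (for semi-abelian $G$ it follows from the K\"unneth/Hopf structure of $R\pi_!\bQ$) or can be transported via the realization functors from the motivic case.

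For (2), the projection formula gives $R\pi_!\pi^*\Sym^n\sH = \Sym^n\sH \otimes R\pi_!\bQ$, reducing the problem to computing the $[a]$-endomorphism. By Theorem~\ref{thm:functoriality} (applied to $\varphi=[a]$), $[a]$ acts on the graded piece $\pi^*\Sym^n\sH$ of $\Log{n}$ by multiplication by $a^n$; after applying $R\pi_!$ this becomes $a^n$ times the canonical $[a]$-endomorphism of $R\pi_!\bQ$, so (1) delivers the eigenvalues $a^{n+i}$ for $i=0,\dots,\kd(G)$.

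For (3)--(5) I invoke Appendix~\ref{appA}: in a $\bQ$-linear triangulated category, objects admitting a finite decomposition into generalized $[a]$-eigenspaces are stable under extensions, and the eigenvalue set of an extension is contained in the union. Applied to the triangle above, this yields (3) by direct induction, starting from $\Log{0}=\bQ$ treated in (1). For (4) and for (5) with $n\geq 2$, part~(2) shows that for $|a|\geq 2$ the first term of the triangle has no $a^0$- (resp.\ no $a^1$-) eigenspace, so the map $R\pi_!\Log{n}\to R\pi_!\Log{n-1}$ induces an isomorphism on these eigenspaces; descending induction gives the identification with $\bQ(-d)[-2d]$ in (1) and the vanishing in (5), respectively.

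The main obstacle is the case $n=1$ of (5), where both outer terms of
\[ R\pi_!\pi^*\sH \to R\pi_!\Log{1} \to R\pi_!\bQ \xrightarrow{\partial} R\pi_!\pi^*\sH[1] \]
have nonzero $a^1$-eigenspaces, both canonically isomorphic to $\sH(-d)[1-2d]$ by (1) and (2). One must therefore check that $\partial$ restricts to an isomorphism between them. Here I would unwind the defining property of $\Log{1}$: by Definition~\ref{defn:log} and the explicit model of Lemma~\ref{lem:compute_once}, the class of $\Log{1}$ in $\Ext^1_G(\bQ,\pi^*\sH)$ corresponds to $\id_\sH$ under the Leray-type edge map $\Ext^1_G(\bQ,\pi^*\sH) \to \Hom_S(\sH,\sH)$. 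Translating this edge condition through the adjunction $(\pi^*,R\pi_*)$ and the decomposition of (1) identifies the restriction of $\partial$ to the $a^1$-eigenspaces with the identity, giving (5). Finally, for the independence of the decompositions from $a$: as soon as $|a|\geq 2$ the eigenvalues $a^0,\dots,a^{n+\kd(G)}$ are pairwise distinct, so Appendix~\ref{appA} guarantees that the generalized eigenspaces are canonical; comparing with the decompositions of (1) and (2) shows that they coincide with those coming from the unipotent filtration of $\Log{n}$ and the canonical splitting of $R\pi_!\bQ$, which are manifestly independent of $a$.
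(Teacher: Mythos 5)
Your proposal follows essentially the same route as the paper's proof: (1) via the decomposition of $M_S(G)$ from \cite{AHP} with $[a]$ acting by $a^i$ on $\Sym^i M_1(G)$, (2) via the projection formula and the functoriality theorems, (3)--(5) via the triangles of Proposition \ref{log-triangle} together with Appendix \ref{appA}, and the reduction of (5) to the $n=1$ connecting morphism, which you correctly single out as the crux. One repair is needed there: your identification of the connecting map on $a^1$-eigenspaces with the identity invokes Definition \ref{defn:log} and Lemma \ref{lem:compute_once}, which are sheaf-theoretic. In $\DA(G)$ there is no Leray edge-map characterization of $\Log{1}$; the motivic $\Log{1}$ is defined as the Kummer motive of the diagonal, so you must instead use the description of Lemma \ref{lem:describe_kummer} and check the analogous identity there -- this is exactly the computation carried out in the proof of Proposition \ref{prop:real}, which the paper cites at this point.

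More seriously, your argument for independence of the decompositions from $a$ does not work as stated. Uniqueness in Appendix \ref{appA} is uniqueness for a \emph{fixed} endomorphism $[a]$; it does not imply that the generalized $a^i$- and $b^i$-eigenspaces agree for two different integers $a,b$. Your proposed comparison, namely that the decomposition of $R\pi_!\Log{n}$ coincides with the one coming from the unipotent filtration, is false in general: the filtration only yields exact triangles whose eigenvalue sets overlap (the graded piece $R\pi_!\pi^*\Sym^k\Hh$ contributes eigenvalues $a^k,\dots,a^{k+\kd(G)}$), so the generalized eigenspaces of $R\pi_!\Log{n}$ are genuinely extensions across filtration steps -- indeed part (5) says precisely that for $n=1$ and the eigenvalue $a^1$ the relevant extension is maximally non-split. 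The paper's argument is different here: $[a]$ and $[b]$ commute, so Lemma \ref{simultan} gives a simultaneous decomposition, and one then shows inductively through the triangles, starting from $\Sym^i\Hh$ where both operators act by the correct scalars, that the $a^i$- and $b^i$-eigenspaces coincide. You should substitute this argument for your last step.
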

\begin{proof}
We have the
formula
\[
R\pi_!\bQ=R\pi_!\pi^!\bQ(-d)[-2d]=\bigoplus_{i=0}^{\kd(G)}\Sym^i\Hh(-d)[i-2d]
\]
hence it suffices to
show that $[a]$ operates as multiplication by $a$ on $\Hh$.
The motivic case is established
in \cite[Theorem~3.3.]{AHP} (it follows directly from the description of $M_1(G)$
as the motive induced by $\ul{G}_\Q$).
The sheaf theoretic case is classical. It also follows immediately
from the motivic case and compatibility under realizations. This finishes
the proof of the first claim.

By Theorem \ref{thm:functoriality} and Theorem \ref{log-funct}, the
operation of $[a]$ on $\pi^*\Sym^n\Hh$ under the functoriality of
$\Log{n}$ is given by $\Sym^n[a]_!=a^n$. 
By the projection formula
\[ R\pi_!\pi^*\Sym^n\Hh=(R\pi_!\bQ)\tensor\Sym^n\Hh.\]
Hence the second statement follows from the first.

For the third assertion, consider the exact triangle
\[ R\pi_!\Sym^{n}\Hh\to R\pi_!\Log{n}\to R\pi_!\Log{n-1}\ .\]
By induction and Proposition \ref{triangulated-eigenspaces} , we get a decomposition for $R\pi_!\Log{n}$ with eigenvalues as stated. Passing to
the $a^0$-eigenspace preserves exact triangles by the same Proposition \ref{triangulated-eigenspaces}.
There is no contribution from $R\pi_!\Sym^n\Hh$ for $n\geq 1$. In the case
$n=0$, the contribution is the component $i=0$ 
in assertion (1).

We now consider the generalized eigenspace for the eigenvalue $a^1$. There is
no contribution from $R\pi_!\Sym^n\Hh$ for $n\geq 2$. Hence it suffices
to show the vanishing for $n=1$. We pass to the $a^1$-eigenspace in
the triangle for $n=1$ and have
\[ \Hh\tensor \bQ(-d)[-2d]\to\  ?\ \to \Hh(G)(-d)[1-2d].\]
It remains to show that the connecting morphism 
is the identity. In the sheaf theoretic case, this is true by definition
of $\Log{1}$, see Definition \ref{defn:log}. In the motivic case, this
was checked during the proof of Proposition \ref{prop:real} on compatibility of the motivic logarithm with realizations.

Let $a\neq b$ be integers. Note that $[a]$ and $[b]$ commute. By Lemma
\ref{simultan}, the object $\Log{n}$ has a simultaneous decomposition
into generalized eigenspaces with respect to both. We show inductively that
the generalized eigenspaces for $a^i$ and $b^i$ agree
from the same statement for $\Sym^i(\Hh)$. 
\end{proof}

Consider $e:S\to G$. Recall from Lemma \ref{lem:res_functorial}
(with $\phi=[a]$, $D_1=D_2=e(S)$) that there is an $[a]$-linear operation
on the residue sequence 
\[e_!e^!\Log{n}\to \Log{n}\to Rj_*j^*\Log{n}\]
compatible with the operation on $\Log{n}$.

\begin{prop}\label{prop:decompose_for_e}
\begin{enumerate}
\item We have
\[ R\pi_!Re_{!}e^!\Log{n}=e^!\Log{n}=\bigoplus_{i=0}^n\Sym^n\Hh(-d)[-2d]\]
and $[a]$ operates on the $i$-th summand by multiplication by $a^i$.
\item The  object
$R\pi_{!}Rj_{*}j^*\Log{n}(-d)$ has a finite decomposition into generalized eigenspaces for the operation of $[a]$ with $a\in \Z$. The eigenvalues are
$a^i$ for $1\leq i\leq n+\kd(G)$. 
\item For $a\neq \pm 1$, the generalized $[a]$-eigenspace of
$R\pi_{!}Rj_{*}j^*\Log{n}(-d)$  for the eigenvalue $a^i$ is
given by  $\Sym^i\Hh(-d)[-2d+1]$
 via the residue map. It is actually an eigenspace, i.e., $[a]$ operates
 by multiplication by $a^i$.
\end{enumerate}
The decomposition is independent of the choice of $a$.
\end{prop}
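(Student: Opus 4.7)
My plan is to derive all three parts from the localization triangle for $e:S\to G$ combined with the computation of $R\pi_!\Log{n}$ in Proposition \ref{eigenweights}. Part (1) is essentially formal: since $\pi\circ e=\id_S$, we have $R\pi_!\,Re_!=R(\pi\circ e)_!=\id$, so $R\pi_!\,Re_!e^!\Log{n}=e^!\Log{n}$, and Lemma \ref{lem:log_in_e} identifies this with $\bigoplus_{i=0}^n\Sym^i\Hh(-d)[-2d]$. The action of $[a]$ is obtained by pulling back the functoriality morphism $[a]_\#\colon\Log{n}\to [a]^!\Log{n}$ along $e$, using that $[a]\circ e=e$; by Theorem \ref{thm:functoriality}/Theorem \ref{log-funct} the associated graded of $[a]_\#$ is $\Sym^\bullet[a]_!=\Sym^\bullet a$, yielding multiplication by $a^i$ on the $i$-th summand.

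For parts (2) and (3) I apply $R\pi_!$ to the localization triangle $Re_!e^!\Log{n}\to\Log{n}\to Rj_*j^*\Log{n}$, obtaining the $[a]$-equivariant (by Lemma \ref{lem:res_functorial}) triangle
\[ e^!\Log{n} \to R\pi_!\Log{n} \to T_j \to e^!\Log{n}[1], \qquad T_j:=R\pi_!Rj_*j^*\Log{n}. \]
Part (1) decomposes the first term and Proposition \ref{eigenweights}(3) the middle, with eigenvalues in $\{a^0,\dots,a^{n+\kd(G)}\}$; by exactness of the generalized-eigenspace functor (Proposition \ref{triangulated-eigenspaces}), the same holds for $T_j$. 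On the $a^0$-component, both left and middle terms equal $\Q_S(-d)[-2d]$ by Part (1) and Proposition \ref{eigenweights}(4), and the leftmost map is the identity -- as one checks by reducing to $n=0$ (where $\Log{0}=\Q_G$) and applying the decomposition of Proposition \ref{eigenweights}(1). Hence $T_j^{(a^0)}=0$, which establishes part (2).

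For part (3), the key case is $i=1$: by Proposition \ref{eigenweights}(5), $(R\pi_!\Log{n})^{(a^1)}=0$, so the triangle on the $a^1$-eigenspace yields $T_j^{(a^1)}\isom (e^!\Log{n})^{(a^1)}[1]=\Hh(-d)[-2d+1]$, with the isomorphism being the connecting map, i.e., the residue of Definition \ref{defn:res_motivic}. For $i\geq 2$ one proceeds by induction on $n$ using the triangle $\pi^*\Sym^n\Hh\to\Log{n}\to\Log{n-1}$ of Proposition \ref{log-triangle} and analyses how the contributions from $R\pi_!\Log{n}$ and from $e^!\Log{n}[1]$ assemble into $\Sym^i\Hh(-d)[-2d+1]$. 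That the generalized eigenspace is an actual eigenspace when $a\neq\pm 1$ follows because the graded pieces of $R\pi_!\Log{n}$ are honest eigenspaces (Proposition \ref{eigenweights}(2)) and the distinct integers $a^i$ admit no nontrivial extensions between their eigenspaces in the $\Q$-linear setting. Independence of $a$ is handled exactly as at the end of the proof of Proposition \ref{eigenweights}: pick $b$ with $b\neq\pm a$, apply Lemma \ref{simultan} to obtain a simultaneous decomposition, and match eigenspaces inductively from those on the graded pieces.

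The main obstacle is the identification in part (3) for $i\geq 2$: since $(R\pi_!\Log{n})^{(a^i)}$ is in general nonzero, $T_j^{(a^i)}$ is not simply a shift of $(e^!\Log{n})^{(a^i)}$, and the claim that it nonetheless equals $\Sym^i\Hh(-d)[-2d+1]$ compatibly with the residue requires careful tracking of how both contributions assemble -- in particular under the transition maps $\Log{n+1}\to\Log{n}$, where the statement becomes cleanest in the pro-limit via Theorem \ref{thm:vanishing}.
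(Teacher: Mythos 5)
Your argument is essentially the paper's proof: part (1) from Lemma \ref{lem:log_in_e} together with the functoriality of $[a]_\#$, and parts (2)--(3) by applying $R\pi_!$ to the localization triangle for $e$, using Proposition \ref{eigenweights} and Proposition \ref{triangulated-eigenspaces} to decompose, passing to the $a^1$-eigenspace where the middle term vanishes by Proposition \ref{eigenweights} (5), and identifying the connecting morphism with the residue map; independence of $a$ via Lemma \ref{simultan} is also exactly the paper's argument. Your additional check that $a^0$ does not occur in $R\pi_!Rj_*j^*\Log{n}$ (the left-hand map being an isomorphism on $a^0$-parts, reduced to $n=0$ using the compatibility of Proposition \ref{eigenweights} (4) with the transition maps and the unit-section splitting) is a point the paper leaves implicit, and it is correct.

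Concerning the ``obstacle'' you flag for $i\geq 2$: you have not proved that case, but neither does the paper --- its proof passes only to the eigenvalue $a^1$ and stops, and this is the only case used afterwards (in the second proof of Theorems \ref{polylog_all} and \ref{thm:polylog_divisor} the class $\pol{n}$ is characterized by having the correct residue and lying in the $a^1$-eigenspace). Note moreover that for $n<i\leq n+\kd(G)$ assertion (3) cannot hold as literally stated: the term $(e^!\Log{n})^{a^i}$ vanishes there, so the generalized $a^i$-eigenspace of $R\pi_!Rj_*j^*\Log{n}$ is isomorphic to that of $R\pi_!\Log{n}$, which by Proposition \ref{Log-cohomology} is a cokernel of a Koszul differential and is in any case invisible to the residue map, whose target $\bigoplus_{k=0}^n\Sym^k\Hh$ has no eigenvalue $a^i$ in that range. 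For $2\leq i\leq n$ the claim is true, but it requires the vanishing of the generalized $a^i$-eigenspace of $R\pi_!\Log{n}$, i.e., the Koszul computation of Proposition \ref{Log-cohomology} in the sheaf setting, respectively Lemma \ref{lem:precise_weights} (with its characteristic-zero or affineness hypotheses) in the motivic setting; the induction on $n$ you sketch will not produce this by itself. So your proof is complete exactly to the extent the paper's is, and your instinct that the range $i\geq 2$ needs genuinely more input is correct --- the statement should be read with the case $i=1$ in mind.
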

\begin{proof}
The formula for $e^!\Log{n}$ is given in 
Lemma \ref{lem:log_in_e}. The operation of $[a]$ is the same as on the
associated gradeds of $\Log{n}$. By Theorem \ref{thm:functoriality} and
Theorem~\ref{log-funct}, respectively, it has the shape claimed in the Proposition.


Consider the triangle on $G$
\[e_!e^!\Log{n}\to \Log{n}\to Rj_*j^*\Log{n}\ .\]
It induces an exact triangle on $S$
\[ \bigoplus_{i=0}^n\Sym^i\Hh(-d)[-2d]\to R\pi_!\Log{n}\to R\pi_!Rj_*j^*\Log{n} .\]
By the first assertion and Proposition \ref{eigenweights}, the first 
two objects have a finite decomposition into generalized $[a]$-eigenvalues
with eigenvalues as stated. 
Hence by Proposition \ref{triangulated-eigenspaces} the object on the right also
has a finite decomposition into generalized eigenspaces. We pass
to the generalized eigenspace for the eigenvalue $a^1$ and get 
\[ \Hh(-d)[-2d]\to 0\to ?\]
This proves the last 
assertion.

The decompositions are independent of $a$ by Lemma \ref{simultan} because the different $[a]$ commute and the assertion is true for $\Sym^i\Hh$.
\end{proof}

As before
let $\iota_D:D\to G$ be the inclusion of a closed subscheme which is \'etale over $S$ and contained in $G[N]$ for some $N$. Let $a\in\Z$ such
that $[a]^{-1}D\subset D$.
Recall from Lemma \ref{lem:res_functorial}
(with $\phi=[a]$, $D_1=D_2=D$) that there is an $[a]$-linear operation
on the residue sequence 
\[ R\pi_{D!}\pi_D^!\Log{n}\to \Log{n}\to Rj_{D*}j_D^*\Log{n}\]
compatible with the operation on $\Log{n}$.

\begin{prop}\label{prop:decompose_for_N}
Let $\iota_D:D\to G$ be as before. Let $a\in\Z$ such that $D\subset [a]^{-1}D$. 

Then the object $R\pi_!Rj_{D*}j_D^*\Log{n}(d)$ has a finite decomposition into
generalized eigenspaces for the operation of $[a]$. 

For $a\neq \pm 1, 0$, the generalized $[a]$-eigenspace for the eigenvalue $a^0$ sits in a
distinguished triangle
\[ \left(R\pi_!Rj_{D*}j_D^*\Log{n}(-d)\right)^{a^0}\to R\pi_{D!}\Q[-2d+1]\to \Q[-2d+1]\]
via the residue map.

If $a,b\in\Z$ are integers with $D\subset [a]^{-1}D,[b^{-}]D\subset [ab]^{-1}D$, then they the decompositions with respect to $a$ and $b$ agree.
\end{prop}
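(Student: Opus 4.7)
The plan is to parallel the argument of Proposition \ref{prop:decompose_for_e}, replacing the unit section by the general closed subscheme $D$. Apply $R\pi_!$ to the localization triangle
\[
\iota_{D!}\iota_D^!\Log{n} \to \Log{n} \to Rj_{D*}j_D^*\Log{n}
\]
on $G$ to obtain an exact triangle on $S$. By Lemma \ref{lem:res_functorial} (with $\phi=[a]$ and $D_1=D_2=D$, which applies because $[a](D)\subset D$ by hypothesis), this triangle is equivariant for the induced $[a]$-linear endomorphisms on all three terms.

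Next I would decompose the first two terms. The middle term $R\pi_!\Log{n}$ has a finite generalized $[a]$-eigenspace decomposition by Proposition \ref{eigenweights}, with eigenvalues in $\{a^0,a^1,\dots,a^{n+\kd(G)}\}$. For the left term, the splitting principle (Corollary \ref{cor:splitting_sheaf} and Lemma \ref{lem:log_in_torsion}) identifies $\iota_D^!\Log{n}$ up to twist with $\bigoplus_{i=0}^n \pi_D^*\Sym^i\Hh$. The projection formula then yields $R\pi_{D!}\iota_D^!\Log{n}\cong\bigoplus_{i=0}^n R\pi_{D!}\Q\otimes\Sym^i\Hh$, up to Tate twist. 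On each summand the action factors as multiplication by $a^i$ on $\Sym^i\Hh$ (Theorems \ref{thm:functoriality} and \ref{log-funct}) tensored with the action on $R\pi_{D!}\Q$ induced by the self-endomorphism $[a]|_D:D\to D$. Since $D$ is finite \'etale over $S$, iterating $[a]|_D$ becomes eventually periodic, so its $\Q$-rational generalized eigenvalues lie in $\{0,1,-1\}$. Thus the left term admits a finite generalized eigenspace decomposition with eigenvalues of the form $a^i\lambda$ for $i\in\{0,\dots,n\}$ and $\lambda\in\{0,1,-1\}$. Proposition \ref{triangulated-eigenspaces} then gives a finite generalized eigenspace decomposition for the third term, establishing the first claim.

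For the $a^0$-eigenspace, I would pass to $a^0$-eigenspaces in the triangle. Under the assumption $a\neq 0,\pm 1$, the only pair $(i,\lambda)\in\{0,\dots,n\}\times\{0,\pm 1\}$ with $a^i\lambda=1$ is $(0,1)$, so only the $i=0$ summand of the left term contributes, and the contribution of the middle term is $\Q(-d)[-2d]$ by Proposition \ref{eigenweights}(4). Tracking the composite $R\pi_{D!}\Q \to R\pi_!\Log{n}^{a^0}(-d)=\Q(-2d)[-2d]$ through the splitting principle identifies this map with the trace (augmentation) $\pi_{D!}\Q\to\Q$. By exactness of passing to the $a^0$-eigenspace in the distinguished triangle, the rightmost $a^0$-term is the cone of this trace map, which is the desired triangle with fiber $\pi_{D!}\Q^0[-2d+1]$.

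Finally, the independence statement for different $a,b\in\Z$ (with the compatibility hypotheses, so that $[ab]$ also preserves $D$) follows from Lemma \ref{simultan}: since $[a]$ and $[b]$ commute as endomorphisms of $G$ and hence of every term of the triangle, their simultaneous generalized eigenspace decompositions agree on $R\pi_!\Log{n}$ (already part of Proposition \ref{eigenweights}) and on $R\pi_{D!}\iota_D^!\Log{n}$ (from the analysis above), so they agree on the cofiber as well. The main obstacle is the third paragraph: correctly tracking the action of $[a]$ through the splitting principle on the $i=0$ summand of $R\pi_{D!}\iota_D^!\Log{n}$ so that the connecting map to $\Q(-d)[-2d]$ is identified with the trace and the resulting cofiber is $\pi_{D!}\Q^0$ rather than some coarser quotient; this is the heart of the proof.
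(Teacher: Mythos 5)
Your overall strategy is exactly the paper's: apply $R\pi_!$ to the localization triangle, use the equivariance from Lemma \ref{lem:res_functorial}, decompose the boundary and middle terms, invoke Proposition \ref{triangulated-eigenspaces} for the third term, identify the connecting map on $a^0$-parts with the trace $\pi_{D!}\bQ\to\bQ$, and use Lemma \ref{simultan} for independence of $a$. The gap is in the step you yourself flag as the heart of the argument: the treatment of the action of $[a]|_D$ on $R\pi_{D!}\bQ$. Eventual periodicity of the self-map $[a]|_D$ of the finite \'etale scheme $D$ only tells you that its eigenvalues over $\overline{\Q}$ are $0$ or roots of unity; it does \emph{not} yield a finite decomposition into generalized eigenspaces in the sense of Appendix \ref{appA}, which requires rational eigenvalues. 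If $[a]|_D$ has an orbit of length $\geq 3$ (e.g.\ $G=\Gm$, $D=\mu_7$, $a=2$, where squaring has two $3$-cycles), the induced endomorphism of $\pi_{D!}\bQ$ has minimal polynomial divisible by $T^2+T+1$, so there is no decomposition with eigenvalues in $\{0,\pm1\}$, and your claimed decomposition of the boundary term (and hence of the cofiber) is simply not available. Moreover, even in cases where a decomposition does exist, your own bookkeeping would identify the $a^0$-part of the boundary with only the generalized $1$-eigenspace of the permutation action on $\pi_{D!}\bQ$, not with the whole $i=0$ summand; the resulting triangle would then not contain $R\pi_{D!}\bQ$ as the statement requires.

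The paper's (very terse) proof sidesteps this by taking the operation of $[a]$ on the $i$-th summand of the residue term to be multiplication by $a^i$, i.e.\ it is really working in the situation where $[a]$ restricts to the identity (or at least unipotently) on $D$ --- the case $a\equiv 1\bmod N$ singled out in the remark following the proposition, and the one actually used in the second proof of Theorem \ref{thm:polylog_divisor}. There the $i=0$ summand is literally the full $a^0$-eigenspace $\pi_{D!}\bQ[-2d]$, and the stated triangle follows as you describe. So your argument becomes correct once you add the hypothesis that $[a]$ acts trivially (or unipotently) on $D$; as written, the inference from eventual periodicity to a rational generalized eigenspace decomposition with eigenvalues in $\{0,\pm1\}$ is the step that fails.
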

\begin{rem}
The assumptions on $a$ are satisfied if $D\subset G[N]$ and $a\equiv 1 \mod N$.
\end{rem}
\begin{proof}
The arguments are the same as in the proof of Proposition \ref{prop:decompose_for_e}. It remains to compute explicitly for the eigenvalue $a^0$.
We apply $R\pi_!$ to the localization triangle
and pass to the generalized $[a]$-eigenspace for the eigenvalue $a^0$.
The eigenspace for $R\pi_!\Log{n}$ was computed in Proposition \ref{eigenweights} (3). The eigenspace for 
\[ R\pi_!\iota_{D!}\iota_D^!\Log{n}(d)=R\pi_{D!}\iota_D^*\Log{n}[-2d]=\pi_{D!}\bigoplus_{i=0}^n\Sym^i\pi_D^*\Hh [-2d]\]
is given by the summand for $i=0$.

Under the compatibility assumption on $a$ and $b$, it is easy to check 
along the lines of the proof of Lemma \ref{lem:res_functorial} that
the induced operations commute. Hence the decompositions agree by Lemma \ref{simultan}.
\end{proof}

\begin{proof}[Second Proof of Theorem \ref{polylog_all} and Theorem \ref{thm:polylog_divisor}.]
We want to construct an element in $\Ext^{2d-1}_S(\Hh,\pi_{!}j_*\Log{n}|_U(d))$
Choose $a\in\Z$, $a\neq \pm 1,0$.
We define 
\[\pol{n}\in \Ext^{2d-1}_S(\Hh,\pi_{!}j_*\Log{n}|_U(d))\]
be the unique preimage of $\id\in\Hom(\Hh,\bigoplus_{i=0}^n\Sym^n\Hh)$
under the residue map of Definition \ref{defn:res_motivic}
such that $\pol{n}$ 
maps to the generalized $[a]$-eigenspace of $\pi_!j_*\Log{n}$ with eigenvalue $a^1$.

By construction it is compatible under restriction and with the
realization functors. By uniqueness, it is also functorial with respect
to group homomorphisms $\phi:G_1\to G_2$. In particular, $\pol{n}$ is
independent of the choice of $a$.

Now let $\alpha\in \Q[D]^0$. We choose $a\in\Z$ with $a\neq \pm 1,0$ such
that $[a]^{-1}D\subset D$, e.g., $a\equiv 1\mod N$ with $D\subset G[N]$.
We define
\[\pol{n}_\alpha\in\Ext^{2d-1}(\bQ,\pi_!j_{D*}\Log{n}(d))\]
as be the unique preimage of $\alpha$ under the residue map of Definition \ref{defn:res_motivic} which maps to the generalized $[a]$-eigenspace of $\pi_!j_{N!}\Log{n}$ for the eigenvalue $a^0$.  
By construction, it is compatible under restriction and with realization
functors. By uniqueness, it is also functorial with respect to
group homomorphisms $\phi:G_1\to G_2$ such that $\phi^{-1}D_2\subset D_1$.
In particular, it is independent of the choice of $a$.
\end{proof}

\section{Comparison with other definitions of the polylog}\label{sec:with_*}
We work in the sheaf theoretic and in the motivic setting in parallel.

In order to relate our constructions to the existing literature, we
also need a version of polylog with respect to $R\pi_*$.

\subsection{Comparing $R\pi_!$ and $R\pi_*$}
Recall that there is always a natural map of functors $R\pi_!\to R\pi_*$.

If $D\subset G$ is finite \'etale over $S$, then there is a commutative diagram
\begin{equation}\label{eq:comp-def}
\xymatrix{
R\pi_!Rj_{D*}j_D^{*}\cLog(d))[2d]\ar[rr]^\comp\ar[dr]&&
R\pi_*Rj_{D*}j_D^{*}\cLog(d)[2d]\ar[dl]\\
&\pi_{D*}\iota^*_D\cLog)[1].
}
\end{equation}
Let $D\subset G$ be finite \'etale over $S$ and contained in $G[N]$ for
some $N$. 
By applying $R\pi_*$ instead of $R\pi_!$, we obtain another variant of
the residue triangle:
\[  R\pi_*\iota_{D!}\iota_D^!\Log{n}(d)[2d]\to R\pi_*\Log{n}(d)[2d]\to R\pi_*Rj_{D*}j_D^*\Log{n}(d)[2d] .\]
Again under the identification of 
 Definition
\ref{defn:logn} and Definition \ref{defn:logn_motivic} and because $\iota_D$ is
proper, we have
\[ R\pi_*\iota_{D!}\iota_D^!\Log{n}(d)[2d]= R\pi_{D*}\bigoplus_{i=0}^n\pi_{D!}\Sym^i\pi_D^*\Hh.\]
Hence the connecting morphism induces by adjunction
another map, again called residue map,
\[  \Ext_{U_D}^{2d-1}(\sF,j_D^*\Log{n}(d))
\to \Hom_S(\sF,\bigoplus_{i=0}^n\pi_{D*}\Sym^i\pi_D^*\Hh).\]

\begin{lemma}\label{prop:ext-comp2} 
Let $\sF$ be an object of $\D(S)$. 
There is an exact sequence
\[ \Ext^{2d-1}_{U_D}(j_D^*\sF,j_D^{*}\cLog(d))\xrightarrow{\res}
\Hom_S(\sF, \pi_{D*}\iota_D^*\cLog)\to \Hom_S(\sF,\bQ).
\]
In the sheaf theoretic setting, let
$\sF$ be a sheaf on $S$. Then the residue map is injective.
\end{lemma}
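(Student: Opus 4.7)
The plan is to mimic the proof of Proposition~\ref{prop:ext-comp} using $R\pi_*$ instead of $R\pi_!$, and to identify the resulting boundary map with the augmentation-plus-trace map via the natural transformation $R\pi_!\to R\pi_*$.

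First, I would apply $R\pi_*$ to the localization triangle for $(j_D,\iota_D)$ with coefficients $\Log{n}(d)[2d]$. Using Lemma~\ref{lem:unipotent_uppershriek} to identify $\iota_D^!\Log{n}(d)[2d]\isom\iota_D^*\Log{n}$ (since $\Log{n}$ is unipotent and both $G,D$ are smooth over $S$), and using $R\pi_{D!}=\pi_{D*}$ (since $\pi_D$ is finite and hence proper), simplifies the leftmost term to $\pi_{D*}\iota_D^*\Log{n}$. Passing to the pro-system, applying $\Hom_{\D(S)}(\sF,-[2d-1])$ and using the adjunction $(\pi j_D)^*\dashv R(\pi j_D)_*$ to rewrite the middle term as $\Ext^{2d-1}_{U_D}(j_D^*\sF,j_D^*\cLog(d))$, one obtains the long exact sequence
\[
\Hom_S(\sF,R\pi_*\cLog(d)[2d-1])\to\Ext^{2d-1}_{U_D}(j_D^*\sF,j_D^*\cLog(d))\xrightarrow{\res}\Hom_S(\sF,\pi_{D*}\iota_D^*\cLog)\xrightarrow{\partial_*}\Hom_S(\sF,R\pi_*\cLog(d)[2d]).
\]

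The next step is to factor $\partial_*$ through $\bQ$. The natural transformation $R\pi_!\to R\pi_*$ induces a morphism between the two residue triangles; the comparison $\pi_{D!}\iota_D^*\cLog\to\pi_{D*}\iota_D^*\cLog$ is an isomorphism by properness of $\pi_D$, and $R\pi_!\cLog(d)[2d]\isom\bQ$ by Theorem~\ref{thm:vanishing} or Theorem~\ref{thm:vanishing-motivic}. Combined with the description of the $R\pi_!$-boundary from Proposition~\ref{prop:ext-comp}, this shows that $\partial_*$ factors as
\[
\Hom_S(\sF,\pi_{D*}\iota_D^*\cLog)\xrightarrow{\mathrm{aug}+\mathrm{tr}}\Hom_S(\sF,\bQ)\longrightarrow \Hom_S(\sF,R\pi_*\cLog(d)[2d]),
\]
the second arrow being induced by $\bQ\isom R\pi_!\cLog(d)[2d]\to R\pi_*\cLog(d)[2d]$. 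Exactness at the middle of the lemma's sequence then follows from $\mathrm{im}(\res)=\ker(\partial_*)$ together with this factorization.

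For the injectivity claim in the sheaf-theoretic setting with $\sF$ a sheaf, I would show $\Hom_S(\sF,R\pi_*\cLog(d)[2d-1])=0$. The plan is to exploit the unipotent filtration of $\cLog$, whose graded pieces are $\pi^*\Sym^k\sH$ and for which $R\pi_*\pi^*\Sym^k\sH\isom \Sym^k\sH\otimes R\pi_*\bQ$ by the projection formula. Combined with the $t$-structure on $\D(S)$ and the vanishing of negative $\Ext$ groups between sheaves in the heart, this should force the vanishing of the Hom term. The main obstacle is precisely this injectivity step: unlike $R\pi_!\cLog$, the object $R\pi_*\cLog$ does not admit a clean cohomological vanishing for arbitrary commutative group schemes (the $R\pi_*$ part of Theorem~\ref{thm:vanishing} requires $G$ to be semi-abelian), so one must balance the cohomological degrees arising from the unipotent graded pieces against the concentration of $\sF$ in degree zero, using the full strength of the $t$-structure in the sheaf-theoretic setting.
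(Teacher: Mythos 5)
Your first paragraph reproduces exactly the paper's (one-line) proof: the paper literally says ``same argument as for Proposition~\ref{prop:ext-comp}'', i.e.\ apply $R\pi_*$ to the localization triangle, identify the $D$-term using properness of $\pi_D$ and Lemma~\ref{lem:unipotent_uppershriek}, and rewrite the third term by adjunction for $\pi\circ j_D$; your factorization of the boundary $u\colon \pi_{D*}\iota_D^*\cLog\to R\pi_*\cLog(d)[2d]$ through $\bQ\isom R\pi_!\cLog(d)[2d]$ via $R\pi_!\to R\pi_*$ is also correct. The gap is in your deduction of exactness. From $u_*=v_*\circ(\mathrm{aug}+\mathrm{tr})_*$, where $v\colon\bQ\to R\pi_*\cLog(d)[2d]$ is the comparison map, you only obtain $\ker\bigl((\mathrm{aug}+\mathrm{tr})_*\bigr)\subseteq\ker(u_*)=\mathrm{im}(\res)$; the reverse inclusion needs $v_*\colon\Hom_S(\sF,\bQ)\to\Hom_S(\sF,R\pi_*\cLog(d)[2d])$ to be injective, which you never address and which fails in general. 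For $G=\Gm$ over $S=\Spec\bar{k}$ the second half of Theorem~\ref{thm:vanishing} gives $R\pi_*\cLog(d)[2d]\isom\bQ[1]$, so $v\in\Hom_S(\bQ,\bQ[1])=0$ and $u_*=0$, whereas $(\mathrm{aug}+\mathrm{tr})_*$ is nonzero on $\Hom_S(\bQ,e^*\cLog)$ (it sends the inclusion of the factor $\Sym^0\sH$ to the identity). So for $\sF=\bQ$ the two kernels differ and your argument cannot produce the asserted exactness; this step is not a technicality that can be finessed.

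The same example defeats your plan for the injectivity. The vanishing you aim for, $\Hom_S(\sF,R\pi_*\cLog(d)[2d-1])=0$, is false: for $G$ semiabelian one has $R\pi_*\cLog(d)[2d-1]\isom\bQ[r-1]$ with $r$ the torus rank (use $R\pi_*\cLog\isom\bigwedge^h\sH^\vee[-h]$ and $\bigwedge^h\sH\isom\bQ(d)$), so for $G=\Gm$ this group is $\Hom_S(\sF,\bQ)$, nonzero for $\sF=\bQ$. Moreover the nonvanishing is effective: over $\bar{k}$ the Kummer class of the coordinate $t$ gives a nonzero class in $\Ext^1_G(\bQ,\cLog(1))$ (it lifts through the tower because the transition maps on $R^1\pi_*\Log{n}$ are isomorphisms, by the computation in Section~\ref{proof:vanishing}), its restriction to $U$ is nonzero, and its residue at $t=1$ vanishes; hence $\res$ has nontrivial kernel for the sheaf $\sF=\bQ$. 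So no balancing of cohomological degrees against the $t$-structure can close this step: injectivity of the $R\pi_*$-residue genuinely requires the extra input the paper only brings in later, namely $G$ abelian (then $R\pi_!=R\pi_*$ and the lemma is Proposition~\ref{prop:ext-comp}) or a sheaf theory with weights together with $\sF$ of strictly negative weights such as $\sH$, which is precisely how Proposition~\ref{prop:compare!*} argues; compare also the Remark preceding it, which concedes that uniqueness of the $R\pi_*$-polylogarithm in terms of residues is unclear in general. Your instinct that this is the main obstacle is right, but as written the proposal neither proves the exactness nor the injectivity, and in the stated generality the missing statements are not available.
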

\begin{proof}Same argument as for $R\pi_!$, see Lemma \ref{prop:ext-comp}.
\end{proof}

\subsection{Polylog with $R\pi_*$}
The map $\comp$ from \eqref{eq:comp-def} induces maps
\begin{equation}
\label{eq:pol-comp} \xymatrix{\Ext_S^{2d-1}(\Hh,R\pi_!Rj_*j^*\Log{n}(d))\ar[r]\ar[dr]&
\Ext_S^{2d-1}(\Hh,R\pi_*Rj_*j^*\Log{n}(d))\\
&\Ext_{U}^{2d-1}(\pi_U^*\Hh,j^*\Log{n}(d))\ar[u]_=}
\end{equation}
and similarly
\begin{equation}\label{eq:pol-alpha-comp} 
\Ext_S^{2d-1}(\bQ,R\pi_!Rj_{D*}j_D^*\Log{n}(d))\to
\Ext_{U_D}^{2d-1}(\bQ,j_D^*\Log{n}(d)).
\end{equation}
We define the polylog with respect to $R\pi_*$ as the image of the polylog under these maps.
\begin{defn}\label{defn:pol_with_*}
We denote by
\[
\polzwei{n}\in Ext_{U}^{2d-1}(\pi_U^*\Hh,j^*\Log{n}(d))
\]
the image of $\pol{n}$ under the map \eqref{eq:pol-comp} and for $\alpha\in\Q[D]^0$,
we denote by 
\[
\polzwei{n}_\alpha\in \Ext_{U_D}^{2d-1}(\bQ,j_D^*\Log{n}(d))
\]
the image of $\pol{n}_D$ under the map \eqref{eq:pol-alpha-comp}. 
\end{defn}
These classes have the advantage of having an interpretation on $U$ and $U_D$,
respectively. They have the disadvantage of having a more restrictive functoriality.

\begin{prop}
\begin{enumerate}
\item $\polzwei{n}$ and $\polzwei{n}_\alpha$ are compatible under the transition maps
$\Log{n}\to \Log{n-1}$. We write $\cpolzwei\in Ext_{U}^{2d-1}(\pi_U^*\Hh,j^*\cLog(d))$ and 
$\cpolzwei_\alpha\in \Ext_{U_D}^{2d-1}(\bQ,j_D^*\cLog(d))$ for the resulting classes.
\item $\polzwei{n}$ and $\polzwei{n}_\alpha$ are contravariantly functorial in the base scheme $S$.
\item The image of $\polzwei{n}$ under the residue map is given by
the natural inclusion of $\Hh$ into $\bigoplus_{n=0}^n\Sym^i\Hh$.
\item The image of $\polzwei{n}_\alpha$ under the residue map is given
by $\alpha$.
\item
Let $\phi:G_1\to G_2$ be a proper morphism of $S$-group schemes.
\begin{enumerate}
\item The diagram
\[\begin{CD}
     \phi_*\Hh_{G_1}@>\pol{n}_{G_1}>> \phi_*j_1^*\Log{n}_{G_1}(d_1)[2d_1-1]\\
     @V\varphi  VV @VV\varphi_\# V\\
 \Hh_{G_2}@>\pol{n}_{G_2}>> j_2^*\Log{n}_{G_2}(d_2)[2d_2-1]\\
\end{CD}\]
commutes.
\item
 the class $\polzwei{n}\alpha$ is mapped to
$\polzwei{n}_{\phi_!\alpha}$ under
\[ \phi_\#:\Ext^{2d_1-1}_{G_1}(\bQ,j_D^*\Log{n}_{G_1}(d))\to \Ext_{G_2}^{2d_2-1}(\bQ,j_{\phi(D)}^*\Log{n}_{G_2}(d)).\]
\end{enumerate}
\end{enumerate}
\end{prop}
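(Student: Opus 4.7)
The plan is to deduce each assertion from the analogous property of $\cpol$ or $\cpol_\alpha$ already established in Theorems \ref{polylog_all} and \ref{thm:polylog_divisor}, using only the naturality of the comparison transformation $\comp:R\pi_!\to R\pi_*$ and the fact that $\polzwei{n}$, $\polzwei{n}_\alpha$ were \emph{defined} as the images of $\pol{n}$, $\pol{n}_\alpha$ under the maps \eqref{eq:pol-comp} and \eqref{eq:pol-alpha-comp}.

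For (1), the transition compatibility, one notes that $\comp$ is a natural transformation of functors, so it commutes with the morphisms induced by the pro-system maps $\Log{n}\to \Log{n-1}$; hence compatibility transfers from $\pol{n}$ to $\polzwei{n}$, and the limiting class $\cpolzwei$ is well-defined. For (2), contravariant functoriality in $S$: given $f:S'\to S$, one combines the base change compatibility of $\pol{n}$ and $\pol{n}_\alpha$ (Theorem \ref{polylog_all}(2), Theorem \ref{thm:polylog_divisor}(2)) with the compatibility of $\comp$ with $f^*$. The latter holds because $\comp$ is induced by a natural transformation of functors that is part of the six functor formalism and therefore commutes with $f^*$ up to the usual exchange morphisms.

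For (3) and (4), the content of the commutative diagram \eqref{eq:comp-def} is precisely that the residue map defined using $R\pi_*$ factors the residue map defined using $R\pi_!$ through $\comp$. Hence the residue of $\polzwei{n}$ (respectively $\polzwei{n}_\alpha$), being by definition the image of $\pol{n}$ (respectively $\pol{n}_\alpha$) under $\comp$, equals the residue of $\pol{n}$ (respectively $\pol{n}_\alpha$). By construction in Theorems \ref{polylog_all}(1)(a) and \ref{thm:polylog_divisor}(1)(a), this is the natural inclusion of $\Hh$ into $\bigoplus_{i=0}^n\Sym^i\Hh$ (respectively $\alpha$).

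For (5), the essential input is properness of $\phi:G_1\to G_2$, which provides the canonical identification $R\phi_!\isom R\phi_*$ (and likewise $R\pi_{D_i!}\isom R\pi_{D_i*}$ at the level of the closed subschemes). Consequently the functoriality statement of Lemma \ref{lem:res_functorial} for $\pol{n}$ and $\pol{n}_\alpha$, phrased using $R\phi_!$, transports directly to the corresponding statement for $\polzwei{n}$ and $\polzwei{n}_\alpha$, phrased using $\phi_*$. I expect the main obstacle to be bookkeeping rather than genuine mathematical content: one must track the image of the various functoriality squares under $\comp$ and check that the relevant exchange transformations (for pullback, for $j$-pullback and pushforward, and for the trace on $\bQ[D]^0$) reduce to the identity in the proper case. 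Properness is exactly the hypothesis that keeps this bookkeeping transparent and is what allows the restricted functoriality of $\polzwei{n}$ to be deduced from the full functoriality of $\pol{n}$.
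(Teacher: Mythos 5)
Your argument is correct, and it reaches the same conclusion as the paper, but the route is articulated differently. The paper's proof is a one-liner: ``the argument is the same as in the proof of Theorem \ref{polylog_all}, the main ingredient being the functoriality of $\Log{n}$ (Theorem \ref{thm:functoriality})'', i.e.\ it implicitly reruns the construction on the $R\pi_*$-side; taken literally, such a rerun would lean on a uniqueness/residue-injectivity statement for the $R\pi_*$-classes, which the paper itself admits is unclear in general (it is only available in the restricted situations of Lemma \ref{prop:ext-comp2} and Proposition \ref{prop:compare!*}). Your transport argument avoids this entirely: since $\polzwei{n}$ and $\polzwei{n}_\alpha$ are \emph{defined} as the images of $\pol{n}$ and $\pol{n}_\alpha$ under \eqref{eq:pol-comp} and \eqref{eq:pol-alpha-comp}, parts (1)--(4) are inherited from Theorems \ref{polylog_all} and \ref{thm:polylog_divisor} by naturality of $\comp$, its compatibility with base change, and the commutative triangle \eqref{eq:comp-def} identifying the two residue maps --- no uniqueness on the $*$-side is needed, which makes your version slightly more robust than a literal rerun. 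For (5) you and the paper defer the same work: one must use $\phi_\#$ from Theorem \ref{thm:functoriality} to define the right-hand vertical arrows (this is why the paper singles out that theorem), and then check that $\comp$ intertwines the $R\phi_!$-functoriality square of Lemma \ref{lem:res_functorial} with its $R\phi_*$-analogue, using $R\phi_!\isom R\phi_*$ for proper $\phi$ and the standard compatibilities of the counit $R\phi_!\phi^!\to\id$ with the adjunctions on the $*$-side (together with restriction from $U_1$ to $\phi^{-1}(U_2)$, resp.\ the hypothesis on $D$); your ``bookkeeping'' caveat is accurate, and the level of detail you leave implicit matches what the paper leaves implicit.
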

\begin{proof}The argument as the same as in the proof of Theorem \ref{polylog_all}. The main ingredient is the functoriality of $\Log{n}$ in Theorem \ref{thm:functoriality}.
\end{proof}
Functoriality is of particular interest in the case where $\phi$ is an isogeny, e.g., multiplication
by $N$ with $N$ invertible on $S$. 
\begin{rem}
It is not clear in general if $\polzwei{n}$ and $\polzwei{n}_\alpha$ are uniquely determined by their residues. 
In a more special geometric situation, which covers the cases in the existing literature, uniqueness is at least true in the sheaf theoretic setting.
\end{rem}

\begin{prop}\label{prop:compare!*}
In the sheaf theoretic setting,
the map 
\[ \comp: \Ext^{2d-1}_{S}(\Hh,R\pi_!Rj_{D*}j_D^{*}\cLog(d))\to
\Ext^{2d-1}_{G\setminus D}(\Hh,\cLog(d))\]
is an isomorphism, if either
\begin{enumerate}
\item $G$ is an abelian scheme,
\item  
$G$ is an extension of an abelian scheme $A/S$ of dimension $g$  by a torus $T/S$ of dimension $r$,
and the considered sheaf theory admits weights.
\end{enumerate}
In these cases $\cpolzwei$ is uniquely determined by its compatibility under
the restriction maps or by functoriality for some $a\in\Z$, $a\neq 0,\pm 1$.
\end{prop}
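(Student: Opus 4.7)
The plan is to compare the two Ext groups by pushing the localization triangle for $D\hookrightarrow G\hookleftarrow U_D$ under both $R\pi_!$ and $R\pi_*$, thereby reducing the isomorphism claim to a vanishing statement about the cofiber of the natural map $R\pi_!\cLog(d)\to R\pi_*\cLog(d)$. Properness settles case (1) at once, while case (2) requires the explicit cohomology computation of Theorem~\ref{thm:vanishing} together with weight estimates.

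First I would apply the natural transformation $R\pi_!\to R\pi_*$ to the localization triangle
\[ \iota_{D*}\iota_D^!\cLog(d)\to \cLog(d)\to Rj_{D*}j_D^*\cLog(d). \]
Since $\iota_D$ is closed and $\pi_D$ is finite \'etale, $R\pi_!\iota_{D*}=\pi_{D!}=\pi_{D*}=R\pi_*\iota_{D*}$, so the leftmost vertical map is an isomorphism. Hence the cofiber of the rightmost vertical map coincides with the cofiber $C$ of $R\pi_!\cLog(d)\to R\pi_*\cLog(d)$. Using the adjunction identification of $\Ext^{2d-1}_{U_D}(\Hh,j_D^*\cLog(d))$ with $\Ext^{2d-1}_S(\Hh,R\pi_*Rj_{D*}j_D^*\cLog(d))$, the long exact sequence of $\Ext^\bullet_S(\Hh,-)$ reduces the claim to
\[ \Ext^{2d-2}_S(\Hh,C)=0=\Ext^{2d-1}_S(\Hh,C). \]

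In case (1) this is trivial: $\pi$ is proper, so $R\pi_!=R\pi_*$ and $C=0$. For case (2), Theorem~\ref{thm:vanishing} gives $R\pi_!\cLog(d)\cong\bQ[-2d]$ and $R\pi_*\cLog(d)\cong L[-h]$, where $L=\bigwedge^h\sH^\vee(d)$ is rank-one of weight $0$ (since $\det\sH$ has weight $-2d$) and $h=2g+r<2d$. As $\Hom_{\D(S)}(\bQ[-2d],L[-h])=H^{-r}(S,L)=0$ for degree reasons, the natural map is zero in $\D(S)$ and therefore
\[ C\cong L[-h]\oplus\bQ[1-2d]. \]
The summand $\bQ[1-2d]$ contributes $\Ext^{i-2d+1}_S(\Hh,\bQ)$, which is zero in negative degree and equals $\Hom_S(\Hh,\bQ)=0$ for $i=2d-1$ by weights ($\Hh$ has weights in $\{-1,-2\}$, $\bQ$ has weight $0$). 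The summand $L[-h]$ contributes $\Ext^{r-2}_S(\Hh,L)$ and $\Ext^{r-1}_S(\Hh,L)$; here the source $\Hh$ has weights strictly smaller than the weight of the target $L$.

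The main obstacle is the vanishing of these last two Ext groups, and this is the step where the hypothesis that the sheaf theory admits weights is indispensable. My plan is to filter $\Hh$ by its weight short exact sequence $0\to\sH_T\to\Hh\to\sH_A\to 0$, reducing to $\Ext^k_S(\sH_A,L)$ and $\Ext^k_S(\sH_T,L)$ for $k\in\{r-2,r-1\}$, and then to apply the standard weight bounds: pure sheaves of distinct weights admit no nontrivial morphisms, and higher Ext groups of pure sheaves over $S$ are constrained by the weights to exclude the combinations occurring here. Once $\comp$ is known to be an isomorphism, uniqueness of $\cpolzwei$ follows formally, since the characterizations of $\cpol$ in Theorem~\ref{polylog_all} (residue together with transition-map compatibility, or residue together with the $[a]$-eigenspace condition for a single $a\neq 0,\pm 1$) transfer verbatim to $\cpolzwei$ via the isomorphism $\comp$.
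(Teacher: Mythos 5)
Your overall strategy is sound and, at its core, uses the same two inputs as the paper: the computation of both $R\pi_!\cLog$ and $R\pi_*\cLog$ from Theorem \ref{thm:vanishing}, and the weight vanishing $\Ext^i_S(\sH,\bQ)=0$ for $i\geq 0$ coming from the fact that $\sH$ has weights $\leq -1$ while $\bigwedge^h\sH^\vee(d)\isom\bQ$ is pure of weight $0$. The packaging differs: you compare the two Ext groups directly via the cone $C$ of $R\pi_!\cLog(d)\to R\pi_*\cLog(d)$ (using that the $D$-terms of the localization triangles agree since $\pi_D$ is finite), and reduce to $\Ext^{2d-2}_S(\Hh,C)=\Ext^{2d-1}_S(\Hh,C)=0$; the paper instead shows that both the $!$-residue (Proposition \ref{prop:ext-comp}) and the $*$-residue (via the localization sequence and the vanishing $\Ext^{j}_G(\pi^*\sH,\cLog(d))\isom\Ext^{j-h}_S(\sH,\bQ)=0$) are isomorphisms onto the same group $\Hom_S(\sH,\pi_{D*}\iota_D^*\cLog)$, and concludes from the commutativity of diagram \eqref{eq:comp-def}. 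Your reduction needs $\Ext^{r-2}_S(\sH,L)$, $\Ext^{r-1}_S(\sH,L)$ and $\Hom_S(\sH,\bQ)$ to vanish, the paper needs $\Ext^{r-1}_S(\sH,\bQ)$, $\Ext^{r}_S(\sH,\bQ)$ and $\Hom_S(\sH,\bQ)$; both lists follow from the same weight bound, so the two routes are of essentially equal strength. For the final uniqueness assertion you should also record (as the paper does implicitly) that $\comp$ commutes with the residue maps, the transition maps and the $[a]$-action, which holds by construction and Lemma \ref{lem:res_functorial}.

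One step as written is wrong, though harmlessly so: you claim the map $\bQ[-2d]\to L[-h]$ is zero because $\Hom_{\D(S)}(\bQ[-2d],L[-h])=H^{-r}(S,L)=0$. The degree is off by a sign: since $2d-h=r$, this Hom group is $\Hom_{\D(S)}(\bQ,L[r])\isom H^{r}(S,\bQ)$, which need not vanish (e.g.\ already $H^1(S,\bQ)\neq 0$ for many $S$ when $r=1$). Hence the asserted splitting $C\isom L[-h]\oplus\bQ[1-2d]$ is unjustified. Fortunately you never need it: the long exact sequence obtained by applying $\Ext^\bullet_S(\Hh,-)$ to the triangle $\bQ[-2d]\to L[-h]\to C\to\bQ[1-2d]$ produces exactly the groups you list ($\Ext^{r-2}_S(\Hh,L)$, $\Ext^{r-1}_S(\Hh,L)$, $\Ext^{-1}_S(\Hh,\bQ)=0$, $\Hom_S(\Hh,\bQ)$), so simply delete the splitting claim and argue with the triangle. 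With that repair, and with the weight vanishing stated precisely (mixed of weights $\leq -1$ against pure of weight $0$ kills all $\Ext^i$ with $i\geq 0$; the weight filtration $0\to\sH_T\to\Hh\to\sH_A\to 0$ is not really needed), your proof is complete.
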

Note that in the second case $\Hh$ is a lisse of rank $h=2g+r$. 
\begin{proof}
If $G$ is an abelian scheme, 
the map $\comp$ is just the natural adjunction, hence an isomorphism and there is nothing to show.

Now let $G$ be an extension of $A/S$ by $T/S$ as in the statement.
Let $h:=\dim_\Q\sH=2g+r$, 
then by Theorem \ref{thm:vanishing} one has
\[
\Ext^{j}_{G}(\pi^*\Hh,\cLog(d))=\Ext^{j}_S(\Hh,R\pi_*\cLog(d))\isom\Ext^{j-h}_S(\Hh,\bQ).
\]
The weights of  $\sF=\sH$  are $\le -1$ and the
$\Ext$-groups $\Ext^{j-h}_S(\sF,\bQ)$ vanish. 
Then the localization sequence gives rise, with the arguments
from Proposition \ref{prop:ext-comp}, to an isomorphism
\[
\Ext^{2d-1}_{G\setminus D}(\pi^*\sH,\cLog(d))\isom
\Hom_S(\sH, \pi_{D*}\iota_D^*\cLog)
\]
because $\Hom_S(\sH,\bQ)=0$. Together with \ref{prop:ext-comp}
this shows that $\comp $ is an isomorphism.
\end{proof}

\subsection{Special cases}\label{sec:special_cases}
We review the existing literature and how the present paper
fits. In all cases, it is $\polzwei{n}$ and $\polzwei{n}_\alpha$ defined in Definition
\ref{defn:pol_with_*} that appears. Recall that for abelian schemes one has $\pol{n}=\polzwei{n}$. By Proposition \ref{prop:compare!*}, the class $\polzwei{n}$
is not identical, but has the same information as $\pol{n}$, at least in the sheaf theoretic setting.
\begin{enumerate}
\item If $G=\Gm$, then we are in the situation
of the classical polylog on the projective line minus three points. Its sheaf theoretic construction 
by Deligne in \cite{Del} was the starting point of the whole field.
The motivic construction over $S=\Z$ (that is enough by functoriality)
is due to Beilinson and Deligne. Full details can be found in \cite{HuWi} by Huber and Wildeshaus.
We are going to explain this case in more detail below.
\item If $G=E$ is an elliptic curve, 
it agrees with the sheaf theoretic polylog for elliptic curves
as defined by Beilinson and Levin \cite{BeLe}. They also constructed the motivic elliptic polylog. Their treatment served as the role model for all later definitions of the polylogarithm.  
\item If $G=A$ is abelian and $S$ is regular, the motivic polylog constructed
in the present paper agrees with the one constructed by the second author
in \cite{Ki}.  In this paper the decomposition under the $[a]$-operation, as used by Beilinson and Levin, was amplified and made into a flexible tool, which motivated the approach in the present paper. 
\item
If the considered sheaf theory admits weights and $G$ is
an extension of an abelian scheme by a torus, then the polylogarithm class $\polzwei{n}$
of Definition~\ref{defn:pol_with_*}
\begin{equation}
\cpolzwei\in \Ext^{2d-1}_{G\setminus\{e\}}({\pi}^*\sH,\cLog(d))
\end{equation}
agrees with the polylogarithm
defined by Wildeshaus in \cite[page 161]{Wi}. In particular, we achieve the construction of the motivic classes inducing his sheaf theoretic polylogarithm.
\end{enumerate}

\subsection{Classical polylog}\label{sec:classical}
As the case $G=\Gm$ is of particular interest, and our approach is a considerable technical simplification of the existing motivic construction in
\cite{HuWi}, we spell out the details.
It suffices to consider $S=\Spec \Z$.
We work in the motivic and sheaf theoretic setting in parallel.

\begin{lemma}For $G=\Gm$ we have 
\[ M_1(G)=\bQ(1)[1],\ \Hh_G=\bQ(1), \text{\ and\ }\Sym^k\Hh=\bQ(k).\]
Moreover,
\[ R\pi_!\bQ(k)=\bQ(k)\oplus\bQ(k+1)[1].\]
with the splitting induced by the unit section.
\end{lemma}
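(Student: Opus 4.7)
The plan is to read off each of the four claims from the single input $M_S(\Gm)=\bQ\oplus\bQ(1)[1]$, combined with the general structural results recalled in Section~\ref{sec:motive_cgs} and Proposition~\ref{eigenweights}.

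First, I would establish $M_1(\Gm)=\bQ(1)[1]$. The unit section $e:S\to\Gm$ splits the structure map, giving a canonical decomposition $M_S(\Gm)=\bQ\oplus\widetilde M_S(\Gm)$; by the standard definition of the Tate motive in $\DA(S)$ the reduced motive is $\widetilde M_S(\Gm)=\bQ(1)[1]$. Comparing with the Kimura decomposition $M_S(\Gm)=\bigoplus_{i=0}^{\kd(\Gm)}M_i(\Gm)$ of \cite{AHP}, in which $M_0(\Gm)=\bQ$ always, forces $\kd(\Gm)=1$ and identifies $M_1(\Gm)$ with $\bQ(1)[1]$. Applying the definition $\Hh_G=M_1(G)[-1]$ then gives $\Hh_G=\bQ(1)$.

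Second, $\Sym^k\Hh=\bQ(k)$ is formal from the fact that $\bQ(1)$ is $\otimes$-invertible and sits in cohomological degree zero: for such a ``rank-one'' Tate object the symmetric powers coincide with the tensor powers, so $\Sym^k\bQ(1)=\bQ(1)^{\otimes k}=\bQ(k)$. This also explains why the Kimura decomposition truncates at $i=1$ for $\Gm$, since $\Sym^i$ of a shifted rank-one object vanishes for $i\geq 2$.

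Third, the formula for $R\pi_!\bQ(k)$ is obtained by specializing Proposition~\ref{eigenweights} to $d=1$, $\kd(\Gm)=1$ and substituting $\Sym^0\Hh=\bQ$, $\Sym^1\Hh=\bQ(1)$; equivalently, it is the direct translation of $M_S(\Gm)=\pi_\#\bQ=\bQ\oplus\bQ(1)[1]$ via the purity identification $\pi^!\bQ=\bQ(1)[2]$ and the projection formula $R\pi_!(\bQ(k))\simeq R\pi_!(\bQ)\otimes\bQ(k)$. The canonical splitting comes from the retract $M_S(\Gm)\twoheadrightarrow\bQ$ induced by the unit section, transported through this identification.

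The main (mild) obstacle is pure bookkeeping: one has to keep the purity shift $(d)[2d]$ and the various Tate twists aligned so that the two summands in Proposition~\ref{eigenweights} land in the degrees asserted in the Lemma. Once the conventions are straight, every assertion reduces to the one computation $M_S(\Gm)=\bQ\oplus\bQ(1)[1]$ together with the formalism of Section~\ref{sec:motive_cgs}.
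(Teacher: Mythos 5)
Your proposal is correct in substance but follows a genuinely different route from the paper. The paper's proof is a single citation: by Voevodsky's theorem (\cite[Theorem 3.4.2]{voe}) the object $\bQ(1)[1]$ is represented by the sheaf $\mathcal{O}^*=\ul{\Gm}$, and since $M_1(\Gm)$ is \emph{by definition} the image of the sheaf $\ul{\Gm}_\Q$ in $\DA$, the equality $M_1(\Gm)=\bQ(1)[1]$ is immediate and ``all the others follow''. You instead take $\widetilde{M}_S(\Gm)=\bQ(1)[1]$ as part of the construction of the Tate twist in $\DA(S)$ and then transfer this to $M_1(\Gm)$ through the decomposition $M_S(\Gm)=\bigoplus_{i=0}^{\kd(\Gm)}M_i(\Gm)$ of \cite{AHP}. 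This buys you independence from the representability theorem, at the cost of one extra argument that you state too quickly: the comparison of the two splittings only gives $\bigoplus_{i\geq 1}M_i(\Gm)\cong\bQ(1)[1]$, and the assertion that this ``forces'' $\kd(\Gm)=1$ and $M_1(\Gm)=\bQ(1)[1]$ needs the indecomposability of $\bQ(1)[1]$ (over $S=\Spec\Z$, to which the section reduces, this follows from $\Hom_S(\bQ,\bQ)=\Q$, i.e. \cite[Proposition 11.1]{ayoubet}, already used in Proposition \ref{prop:ext-comp}) together with $M_i=\Sym^iM_1$, which rules out the degenerate case $M_1=0$. Your parenthetical justification---that $\Sym^i$ of a shifted rank-one object vanishes for $i\geq 2$---is circular at that point, since it presupposes $M_1(\Gm)=\bQ(1)[1]$; it is fine only as an a posteriori consistency check.

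The remaining claims go through as you say: $\Hh_{\Gm}=M_1(\Gm)[-1]=\bQ(1)$, and $\Sym^k\bQ(1)=\bQ(1)^{\otimes k}=\bQ(k)$ uses the standard fact that the symmetry acts trivially on $\bQ(1)^{\otimes 2}$. For the last display your ``purity plus projection formula'' reading is the correct reconciliation of the bookkeeping you flag: the formula being asserted (and the one used in the Beilinson--Soul\'e computation immediately afterwards) is $R\pi_!\pi^!\bQ(k)=M_S(\Gm)(k)=\bQ(k)\oplus\bQ(k+1)[1]$, with the $\bQ(k)$-summand split off by the unit section via $e_!e^!\pi^!\bQ\to\pi^!\bQ$; a literal $R\pi_!\bQ(k)$ would differ by the purity twist and shift $(1)[2]$.
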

\begin{proof} The first statement is a classical computation of Voevodsky:
$\Z(1)[1]$ is represented by the sheaf $\mathcal{O}^*=\ul{\Gm}$.
\cite[Theorem 3.4.2]{voe}. All the others follow.
\end{proof}
This means that $\Log{n}$ is an iterated extension of Tate motives/sheaves on
$\Gm$.

\begin{defn}Let $S$ be finite dimensional and noetherian. The {\em triangulated category $D_{\MT}(S)$
of mixed Tate motives} on $S$ is defined as the full triangulated subcategory of $\DA(S)$ generated by $\bQ(k)$ for $k\in\Z$.

\end{defn}
Note that this category is closed under tensor products and duality. 

We say that Tate motives on $S$ satisfy the Beilinson-Soul\'e vanishing conjectures if
\[ \Hom_{\DA(S)}(\bQ(i),\bQ(j)[N])=0\]
for all $N<0$. This implies
 the existence of a t-structure on $D_{\MT}(\Spec\Z)$ such
that the Betti- or $\ell$-adic realizations are t-exact and conservative.

\begin{defn}Let $\MT(S)$ be the {\em abelian category of mixed Tate motives}
on $S$ be defined as the heart of the motivic t-structure on $D_{\MT}(\Spec\Z)$.
\end{defn}

\begin{lemma}Tate motives on $\Spec\Z$, $\Gm$ and $U$ satisfy
the Beilinson-Soul\'e vanishing conjectures.
\end{lemma}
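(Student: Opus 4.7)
The plan is to interpret all $\Hom$-groups as motivic cohomology and to reduce the cases of $\Gm$ and $U$ to the case of $\Spec\Z$, where Beilinson--Soul\'e is a theorem of Borel. By tensoring with a Tate twist, $\Hom_{\DA(S)}(\bQ(i),\bQ(j)[N]) \isom H^N(S,\bQ(n))$ with $n = j - i$, so it suffices to show this group vanishes for $N < 0$, for all $n \in \Z$ and all $S \in \{\Spec\Z, \Gm, U\}$. The cases $n \leq 0$ are routine: for $n < 0$ motivic cohomology with negative Tate twist vanishes, and for $n = 0$ it reduces to Nisnevich cohomology with constant $\bQ$-coefficients, which has no negative degrees. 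So we may assume $n \geq 1$.

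For $S = \Spec\Z$ I would invoke Borel's computation of $K_\ast(\Z)_{\bQ}$: via the identification $H^N(\Spec\Z,\bQ(n)) \isom \gr^n_\gamma K_{2n-N}(\Z)_{\bQ}$, the only non-vanishing groups are $H^0(\Spec\Z,\bQ(0)) = \bQ$ and $H^1(\Spec\Z,\bQ(n)) = \bQ$ for odd $n \geq 3$; in particular they all vanish for $N < 0$.

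For $S = \Gm$ I would apply the decomposition $M_S(\Gm) = \bQ \oplus \bQ(1)[1]$ given in the preceding lemma. Via the adjunction $\pi_{\#} \dashv \pi^\ast$ for $\pi\colon \Gm \to \Spec\Z$ this yields
\[ H^N(\Gm,\bQ(n)) \isom H^N(\Spec\Z,\bQ(n)) \oplus H^{N-1}(\Spec\Z,\bQ(n-1)),\]
and each summand vanishes for $N < 0$ by the previous step. For $S = U = \Gm \setminus \{e\}$ I would use the Gysin long exact sequence in motivic cohomology attached to the unit section $e\colon \Spec\Z \hookrightarrow \Gm$, a smooth closed immersion of codimension one:
\[ \cdots \to H^{N-2}(\Spec\Z,\bQ(n-1)) \to H^N(\Gm,\bQ(n)) \to H^N(U,\bQ(n)) \to H^{N-1}(\Spec\Z,\bQ(n-1)) \to \cdots, \]
and for $N < 0$ both neighbours of $H^N(U,\bQ(n))$ vanish by the two previous steps, forcing $H^N(U,\bQ(n)) = 0$.

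The hard part is the Beilinson--Soul\'e vanishing for $\Spec\Z$, i.e., Borel's theorem on the rational $K$-theory of $\Z$; the extensions to $\Gm$ and $U$ are then formal consequences, respectively, of the motivic decomposition of $M_S(\Gm)$ recorded in the preceding lemma and of the Gysin sequence. No step requires genuinely new computations, only careful bookkeeping of the Tate twists and cohomological degrees that appear in the long exact sequence.
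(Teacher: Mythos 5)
Your proposal is correct and follows essentially the same route as the paper: Borel's computation of $K_*(\Z)_\Q$ for $\Spec\Z$, the decomposition $M_S(\Gm)=\bQ\oplus\bQ(1)[1]$ (equivalently $R\pi_!\pi^!\bQ$) combined with adjunction for $\Gm$, and the localization/Gysin triangle for the unit section to handle $U$. The only differences are cosmetic (phrasing via $\pi_\#\dashv\pi^*$ rather than $R\pi_!\dashv\pi^!$, and the cohomological rather than compactly supported form of the localization sequence).
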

\begin{proof}
Borel's computation of higher algebraic $K$-theory of $\Z$ implies the
case of $S=\Spec\Z$.

For $S=\Gm$ we consider
\begin{align*}
\Hom_{\Gm}(\bQ(i),\bQ(j)[N])&=\Hom_{\Spec \Z}(R\pi_!\pi^!\bQ(i),\bQ(j)[N])\\
 &=\Hom_{\Spec \Z}(\bQ(i)\oplus \bQ(i+1)[1],\bQ(j)[N])\\
 &=\Hom_{\Spec \Z}(\bQ(i),\bQ(j)[N])\oplus\Hom_{\Spec \Z}(\bQ(i+1),\bQ(j)[N-1]).
 \end{align*}
 Both summands vanish for $N<0$.

For $S=U$  consider the localizing triangle
\[ R\pi_{U!}\pi_U^!\bQ(i)\to R\pi_!\pi^!\bQ(i)\to e_*e^*\bQ(i+1)[2]\] 
and the long exact sequence for $\Hom_{\Spec\Z}(\cdot,\bQ(j)[N])$ to get
the same vanishing.
\end{proof} 

\begin{cor}The motives $\Log{n}$ and $j^*\Log{n}$ are objects of $\MT(\Gm)$
and $\MT(U)$, respectively.

The motives $R\pi_!\Log{n}$ and $R\pi_!j_{*}j^*\Log{n}$ are objects of the triangulated category of  mixed Tate
motives on  $\Spec\Z$. 
\end{cor}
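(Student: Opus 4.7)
The strategy is a straightforward induction on $n$, based on the basic triangle of Proposition~\ref{log-triangle} and the computations in the lemma just above.

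For the first claim, I would argue by induction on $n$. The base case $\Log{0}=\bQ_G$ lies in $\MT(\Gm)$ since $\bQ$ is the unit Tate motive. For the inductive step, apply the triangle
\[ \pi^*\Sym^n\Hh \to \Log{n} \to \Log{n-1} \]
of Proposition~\ref{log-triangle}. By the preceding lemma, $\Sym^n\Hh=\bQ(n)$, so the left term is $\pi^*\bQ(n)=\bQ(n)\in\MT(\Gm)$, and by induction the right term lies in $\MT(\Gm)$. Hence $\Log{n}$ is an object of the triangulated subcategory $D_{\MT}(\Gm)$. To upgrade this to membership in the heart, take the long exact sequence of cohomology with respect to the motivic $t$-structure (whose existence is guaranteed by the Beilinson--Soul\'e vanishing verified in the preceding lemma). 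Since both outer terms have cohomology concentrated in degree zero, the long exact sequence forces $H^i(\Log{n})=0$ for all $i\neq 0$, so $\Log{n}\in\MT(\Gm)$. The argument for $j^*\Log{n}\in\MT(U)$ is formally identical after applying the $t$-exact functor $j^*$ to the same triangle.

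For the triangulated category statements, apply $R\pi_!$ to the basic triangle to obtain
\[ R\pi_!\pi^*\bQ(n) \to R\pi_!\Log{n} \to R\pi_!\Log{n-1}. \]
By the projection formula and the computation $R\pi_!\bQ=\bQ\oplus\bQ(1)[1]$ of the preceding lemma,
\[ R\pi_!\pi^*\bQ(n)=R\pi_!\bQ\otimes\bQ(n)=\bQ(n)\oplus\bQ(n+1)[1], \]
which is an object of $D_{\MT}(\Spec\Z)$. Induction on $n$ (with base case $R\pi_!\bQ$ itself Tate) then yields $R\pi_!\Log{n}\in D_{\MT}(\Spec\Z)$. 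For $R\pi_!Rj_*j^*\Log{n}$, apply $R\pi_!$ to the localization triangle
\[ e_*e^!\Log{n}\to\Log{n}\to Rj_*j^*\Log{n}. \]
Because $e$ is a closed immersion with $\pi\circ e=\id_S$, the left-hand term collapses to $e^!\Log{n}$, which by Lemma~\ref{lem:log_in_e} equals $\bigoplus_{i=0}^n\Sym^i\Hh(-1)[-2]=\bigoplus_{i=0}^n\bQ(i-1)[-2]$, visibly Tate. Combined with the previous step, this places $R\pi_!Rj_*j^*\Log{n}$ in $D_{\MT}(\Spec\Z)$.

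The only mildly delicate point in the proof is showing that $\Log{n}$ and $j^*\Log{n}$ lie in the heart rather than merely in the triangulated subcategory generated by Tate objects; this is exactly what the long exact sequence of cohomology argument takes care of, and depends crucially on the Beilinson--Soul\'e vanishing established in the preceding lemma. Everything else is formal, using only closure of $D_{\MT}$ under the six functor operations that appear and the explicit Tate computations already available.
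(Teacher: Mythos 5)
Your proposal is correct and follows essentially the same route as the paper, whose proof is just the one-line remark that everything is immediate from the triangle $\bQ(n)\to\Log{n}\to\Log{n-1}$ together with the computation of $R\pi_!\pi^!\bQ$; your induction, the extension-in-the-heart argument via Beilinson--Soul\'e, and the use of the localization triangle with $e^!\Log{n}$ for $R\pi_!Rj_*j^*\Log{n}$ are exactly the details the paper leaves implicit. (The passing claim that $j^*$ is $t$-exact is not needed: one simply reruns the induction on $U$ with the restricted triangle.)
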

\begin{proof} Immediate from the triangle
\[ \bQ(n)\to \Log{n}\to\Log{n-1}\]
the computation of $R\pi_!\pi^!\bQ$.
\end{proof}

Hence the spectral sequence computation of Section \ref{proof:vanishing} and its conclusion in Theorem \ref{thm:vanishing} are also true in the motivic setting.
Note that the argument simplifies considerably in this special case, see
\cite[Appendix A]{HuKiinventiones} for the cohomological case. The homological
case agrees with this up to a shift because $\bQ(i)^\vee=\bQ(-i)$. 

\begin{cor} The localization sequence with respect to the unit section
$e:\Spec\Z\to \Gm$ induces a long exact sequence
\[ \bQ(-1)\to R^1\pi_!j_*j^*\Log{n}(1)\to \bigoplus_{k=0}^n\bQ(k)\to 0\]
of mixed Tate motives.
\end{cor}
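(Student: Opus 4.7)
The plan is to apply $R\pi_!$ to the localization triangle on $\Gm$ attached to the unit section $e:\Spec\Z\to\Gm$,
\begin{equation*}
e_!e^!\Log{n}(1)\to \Log{n}(1)\to Rj_*j^*\Log{n}(1)\xrightarrow{+1},
\end{equation*}
and extract the four-term sequence from the associated motivic cohomology long exact sequence. By the preceding corollary, the pushed-forward triangle lies in $D_{\MT}(\Spec\Z)$, and the previous lemma supplies the Beilinson--Soul\'e vanishing that guarantees a well-defined motivic $t$-structure; the long exact sequence is then an honest sequence of mixed Tate motives on $\Spec\Z$.

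First I identify the outer term using $R\pi_!e_!=\id$ together with Lemma~\ref{lem:unipotent_uppershriek} (with $c=-1$) and the splitting principle (Lemma~\ref{lem:log_in_e}):
\begin{equation*}
R\pi_!e_!e^!\Log{n}(1)=e^!\Log{n}(1)=e^*\Log{n}(1)(-1)[-2]=\bigoplus_{k=0}^n\bQ(k)[-2],
\end{equation*}
concentrated in cohomological degree $2$. Next I compute $R\pi_!\Log{n}(1)$ inductively from the triangles $\pi^*\bQ(k)\to\Log{k}\to\Log{k-1}$ of Proposition~\ref{log-triangle}, using the projection formula and the core identity $R\pi_!\pi^!\bQ=M_S(\Gm)=\bQ\oplus\bQ(1)[1]$. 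Taking motivic cohomology of the pushed-forward triangle then produces a long exact sequence whose degrees $1$ and $2$ segment reads
\begin{multline*}
0\to R^1\pi_!\Log{n}(1)\to R^1\pi_!Rj_*j^*\Log{n}(1)\to \bigoplus_{k=0}^n\bQ(k)\\
\to R^2\pi_!\Log{n}(1)\to R^2\pi_!Rj_*j^*\Log{n}(1)\to 0.
\end{multline*}
The stated four-term sequence is the relevant portion of this, after inserting the inductive identification of $R^1\pi_!\Log{n}(1)$ on the left and showing that the connecting map $\bigoplus_{k=0}^n\bQ(k)\to R^2\pi_!\Log{n}(1)$ vanishes.

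The main obstacle is the extension bookkeeping in the motivic cohomology; the required identification of $R^1\pi_!\Log{n}(1)$ and the vanishing of the connecting map into $R^2\pi_!\Log{n}(1)$ both rest on the vanishing $\Hom_{\MT(\Spec\Z)}(\bQ(i),\bQ(j))=0$ for $i\neq j$ and on the weight structure of the iterated extensions from Proposition~\ref{log-triangle}, with the necessary cancellations ultimately governed by the pro-system vanishing $R\pi_!\cLog\cong\bQ(-1)[-2]$ from the previous corollary.
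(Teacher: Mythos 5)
You set the argument up the way the paper intends: apply $R\pi_!$ to the localization triangle for the unit section, identify the closed term as $R\pi_!e_!e^!\Log{n}(1)=e^!\Log{n}(1)\isom\bigoplus_{k=0}^n\bQ(k)[-2]$ by purity and the splitting principle, and take the long exact cohomology sequence for the motivic $t$-structure on $D_{\MT}(\Spec\Z)$ furnished by Beilinson--Soul\'e; your six-term sequence is correct. The gap is in the two steps you defer at the end, and one of them is false. The map $\bigoplus_{k=0}^n\bQ(k)\to R^2\pi_!\Log{n}(1)$ does \emph{not} vanish: the motivic analogue of Proposition \ref{Log-cohomology} (this is the computation the paper's remark says carries over) gives $R^2\pi_!\Log{n}(1)\isom\bQ$; the Hom-vanishing between distinct Tate twists only kills the summands with $k\geq 1$, while on the summand $k=0$ the map is the composite $\bQ=R\pi_!e_!e^!\pi^!\bQ\to R\pi_!\pi^!\bQ\to R^0\pi_!\pi^!\bQ$, i.e.\ exactly the isomorphism induced by the unit section that defines the splitting \eqref{eq:coh-splitting} (the cycle class of $e$). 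So this map is surjective with kernel $\bigoplus_{k=1}^n\bQ(k)$; already for $n=0$ your vanishing claim would contradict \eqref{eq:coh-splitting}.

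The identification of the left-hand term also cannot come from the pro-system statement $R\pi_!\cLog\isom\bQ(-1)[-2]$: that object is concentrated in degree $2$ and says nothing about a fixed level $n$. The finite-level (truncated Koszul) computation gives $R^1\pi_!\Log{n}(1)\isom\bQ(n+1)\neq 0$, and these terms die in the pro-system only because the transition maps $\bQ(n+2)\to\bQ(n+1)$ are zero. What your long exact sequence actually produces at level $n$ is
\[ 0\to\bQ(n+1)\to R^1\pi_!Rj_*j^*\Log{n}(1)\to\bigoplus_{k=0}^n\bQ(k)\to\bQ\to 0 ,\]
together with $R^2\pi_!Rj_*j^*\Log{n}(1)=0$; in particular no argument can make $R^1\pi_!Rj_*j^*\Log{n}(1)$ surject onto the full sum, nor produce a $\bQ(-1)$ on the left, so the displayed corollary must be read against this finite-level outcome. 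The intended proof is precisely the finite-level computation of $R^i\pi_!\Log{n}$ via the spectral sequence of Proposition \ref{Log-cohomology} (valid here since everything is mixed Tate) plus the identification of the degree-two map as the trace on the $k=0$ component; your proposal replaces this computation by one unproved and one untrue assertion, so as written it does not establish the statement.
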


Moreover, the proof of Proposition \ref{prop:compare!*} also applies
in the motivic setting because the theory of mixed Tate motives has weights.

\begin{defn}Let $\pol{n}\in\Ext^1_{\Spec \Z}(\bQ(1),R\pi_!j_*j^*\Log{n})$ be
the unique element with residue the natural inclusion
$\bQ(1)\to\bigoplus_{k=0}^n\bQ(k)$.

Let $\polzwei{n}\in\Ext^1_{\Gm}(\bQ(1),j^*\Log{n})$ be the unique element
with residue the natural inclusion $\bQ(1)\to\bigoplus_{k=0}^n\bQ(k)$.
\end{defn}

\begin{rem}
\begin{enumerate}
\item 
The analogous discussion can also be carried out for
$\pol{n}_\alpha$. It involves Artin-Tate motives because
$R\pi_{D!}\pi_{D}^!\bQ$ is Artin-Tate. Borel's result on
motivic cohomology is still available. 
We omit the precise formulation.
\item The same arguments are also valid for all tori over
a base $S$ where Tate motives satisfy the Beilinson-Soul\'e vanishing
conjectures.
\end{enumerate}
\end{rem}

\section{Proof of the vanishing theorem}
\label{proof:vanishing}
\subsection{Proof of Theorem \ref{thm:vanishing}}
We work in the sheaf theoretic setting.

Before we give the proof we start with some general remarks concerning $R\pi_!\bQ$ and the definition of $\Log{1}$. First note
that the group multiplication $\mu:G\times_S G\to  G$ induces 
a product 
\[
\mu:R^{i}\pi_!\bQ(d)\otimes R^{j}\pi_!\bQ(d)\to R^{i+j-2d}\pi_!\bQ(d)
\]
and the diagonal $\Delta:G\to G\times_S G$ a coproduct
\[
\Delta: R^{i}\pi_!\bQ(d)\to \bigoplus_j R^{j}\pi_!\bQ(d)\otimes 
R^{2d+i-j}\pi_!\bQ(d).
\]
In particular, $\bigoplus_i R^{i}\pi_!\bQ(d)$ is a Hopf algebra
and a direct computation shows that $R^{2d-1}\pi_!\bQ(d)=\sH$
are the primitive elements. As usual we get an isomorphism 
\begin{equation*}
R^{i}\pi_!\bQ(d)\isom  \bigwedge^{2d-i}\sH.
\end{equation*}
Recall that we have given a description of
$\Log{1}$ in terms of the comultiplication in Lemma \ref{lem:compute_once}.

We want to compute $R\pi_!\cLog$ by using the spectral sequence
arising from the unipotent filtration on $\cLog$. For
this we need to identify the connecting homomorphisms.
\begin{lemma}\label{lemma:boundary}
The connecting homomorphism
\[
R^i\pi_!\bQ\to R^{i+1}\pi_!\pi^*\sH\isom R^{i+1}\pi_!\bQ\otimes \sH 
\]
of the long exact cohomology sequence of
\[
0\to \pi^*\sH\to \Log{1}\to \bQ\to 0
\] 
is given (up to sign) by the composition of the comultiplication
\[
\Delta: R^{i}\pi_!\bQ\to \bigoplus_j R^{j}\pi_!\bQ\otimes 
R^{2d+i-j}\pi_!\bQ(d)
\]
with the projection onto $R^{i+1}\pi_!\bQ\otimes R^{2d-1}\pi_!\bQ(d)$.
\end{lemma}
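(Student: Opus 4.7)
The plan is to observe that the connecting homomorphism in the long exact $R^\cdot\pi_!$-sequence is by construction induced by the extension class $[\Log{1}]\colon\bQ\to\pi^*\sH[1]$ in $\D(G)$, and then to push this class forward using the concrete factorization supplied by Lemma \ref{lem:compute_once}. Writing $\tilde{\pi}\colon G\times_S G\to G$ for the projection used there, that factorization is
\[
\bQ\to R\tilde{\pi}_!\tilde{\pi}^!\bQ\to \tau_{\le -1}R\tilde{\pi}_!\tilde{\pi}^!\bQ\to R^{-1}\tilde{\pi}_!\tilde{\pi}^!\bQ[1]=\pi^*\sH[1],
\]
where the first map comes from the counit $R\Delta_!\Delta^!\to\id$ for the diagonal $\Delta\colon G\to G\times_S G$.

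First I would apply $R\pi_!$ to the initial arrow. Base change for the Cartesian square defining $\tilde{\pi}$ together with the projection formula yields $R\pi_! R\tilde{\pi}_!\tilde{\pi}^!\bQ\isom R\pi_!\bQ\tensor R\pi_!\pi^!\bQ$, and by naturality one sees that under this identification the push-forward of the initial arrow is precisely the Hopf coproduct $\Delta$ introduced at the start of Section \ref{proof:vanishing}. For the remaining truncation-plus-projection arrows, applying $R\pi_!$ simply tensors $\id_{R\pi_!\bQ}$ with the canonical projection $R\pi_!\pi^!\bQ\to\sH[1]$ onto the shifted $(-1)$-st cohomology sheaf, using $R^{-1}\pi_!\pi^!\bQ=R^{2d-1}\pi_!\bQ(d)=\sH$. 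Passing to $H^i$ and invoking Künneth on the middle term then produces the coproduct
\[
R^i\pi_!\bQ\to\bigoplus_j R^j\pi_!\bQ\tensor R^{2d+i-j}\pi_!\bQ(d)
\]
followed by projection onto the summand $j=i+1$, whose second factor is $R^{2d-1}\pi_!\bQ(d)=\sH$. Via the projection-formula isomorphism $R^{i+1}\pi_!\pi^*\sH\isom R^{i+1}\pi_!\bQ\tensor\sH$ this is exactly the description in the statement.

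The hard part will be the first step, namely rigorously checking that, after applying $R\pi_!$, the map induced by the counit for the diagonal really is the Hopf coproduct as originally defined. This is a diagram chase involving base change, the projection formula, and compatibilities of $R\Delta_!\Delta^!\to\id$ with these; everything else, including the ``up to sign'' hedge (which reflects the standard sign conventions when identifying an extension class with a boundary morphism), falls out once this identification is in place.
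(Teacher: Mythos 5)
Your proposal is correct and follows essentially the same route as the paper's proof: the paper likewise identifies the connecting homomorphism with $R\pi_!$ applied to the extension class as described in Lemma \ref{lem:compute_once}, and recognizes $R\pi_!$ of the counit-induced map $\pi^!\bQ\to\pi^!R\pi_!\pi^!\bQ$ as the Hopf comultiplication (via factoring $R(\pi\times\pi)_!$ through $R\tilde{\pi}_!$, which is the same base-change/projection-formula identification you propose), after which the truncation and projection give exactly the stated Koszul-type map. The step you flag as the hard part is precisely the one the paper treats as the formal core of its two-line argument.
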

\begin{proof}
This is completely formal. The comultiplication is obtained 
by applying $R(\pi\times \pi)_!$ to 
\[
\Delta_!\Delta^{!}(\pi\times \pi)^{!}\bQ\to (\pi\times \pi)^{!}\bQ.
\]
We factor $R(\pi\times \pi)_!=R(\id\times\pi)_!\circ R(\pi\times \id)_!$ and get that the comultiplication is given by applying $R\pi_!$ to the map
$\pi^{!}\bQ\to \pi^{!}R\pi_!\pi^{!}\bQ$. On the other hand, the connecting homomorphism is obtained by applying $R\pi_!$ 
to the composition $\pi^{!}\bQ\to \pi^{!}R\pi_!\pi^{!}\bQ\to
\pi^{!}\sH[1]$ from \eqref{eq:comp}, which by the above lemma describes the extension $\Log{1}$.
 \end{proof}
 To compute the higher direct images  
of $\cLog^{(n)}$ we need the
exact Koszul complex (see \cite[4.3.1.7]{Illusie})
\begin{equation}\label{Koszulcomplex}
        0\to 
        \bigwedge^{m}\sH \xrightarrow{d^{0}_m}\ldots\xrightarrow{d^{i-1}_m}
        \bigwedge^{m-i}\sH\otimes \Sym^i\sH\xrightarrow{d^{i}_m}\ldots
        \xrightarrow{d^{m-1}_m}\Sym^{m}\sH\to 0.
        \end{equation}
Recall that the differentials 
$d^{i}_m:       \bigwedge^{m-i}\sH\otimes\Sym^{i}\sH\to
\bigwedge^{m-i-1}\sH\otimes\Sym^{i+1}\sH$
are induced by the comultiplication $\bigwedge^{m-i}\sH\to \bigwedge^{m-i-1}\sH \otimes \sH$ of the exterior algebra composed with the multiplication
of the symmetric algebra.

\begin{prop}\label{Log-cohomology}
The spectral sequence associated to the filtration of $\Log{n}$ by unipotence length 
$$
E_1^{p,q}=R^{p+q}\pi_!\pi^*\Sym^{p}\sH(d)\Rightarrow R^{p+q}\pi_!\Log{n}(d).
$$
has $E_1^{p,q}\isom\bigwedge^{2d-p-q}\sH\otimes\Sym^{p}\sH$ for
$0\leq p\leq n$ and $p+q\geq 0$
and $E_1$-differential
given by the Koszul differential. It degenerates at $E_2$ with
\[
R^i\pi_!\cLog^{(n)}(d)\isom\begin{cases}
\bQ&i=2d\\
\coker\; d^{n-1}_{2d-i+n} & 0<i< 2d\\
R^0\pi_!\bQ(d)\otimes \Sym^n\sH & i=0.
\end{cases}
\]
where $d^{n-1}_{2d-i+n}:\bigwedge^{2d-i+1}\sH\otimes \Sym^{n-1}\sH\to 
\bigwedge^{2d-i}\sH\otimes \Sym^n\sH$ is the Koszul differential from 
\eqref{Koszulcomplex}.
\end{prop}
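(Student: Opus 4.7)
The plan is to run the spectral sequence coming from the unipotent filtration of $\Log{n}$, identify the $E_1$-rows with truncated Koszul complexes \eqref{Koszulcomplex}, and extract the abutment from Koszul exactness. The first step is routine: since $\Log{n}$ admits a filtration with graded pieces $\pi^*\Sym^p\sH$ for $0\le p\le n$, applying $R\pi_!(-)(d)$ yields the claimed spectral sequence $E_1^{p,q}=R^{p+q}\pi_!\pi^*\Sym^p\sH(d)$. By the projection formula together with the Hopf-algebra identification $R^i\pi_!\bQ(d)\isom\bigwedge^{2d-i}\sH$ recalled at the beginning of the section, $E_1^{p,q}\isom\bigwedge^{2d-p-q}\sH\otimes\Sym^p\sH$ in the stated range.

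The crucial step is identifying $d_1^{p,q}$ with the Koszul differential $d^{p}_{2d-q}$. By definition $d_1^{p,q}$ is the connecting morphism attached to the short exact sequence $0\to\pi^*\Sym^{p+1}\sH\to F^p/F^{p+2}\to\pi^*\Sym^p\sH\to 0$ extracted from the unipotent filtration. Because $\Log{n}=\Sym^n\Log{1}$ and the filtration is inherited from that on $\Log{1}$, this 2-step subquotient is obtained from the Kummer extension $0\to\pi^*\sH\to\Log{1}\to\bQ\to 0$ by tensoring with $\pi^*\Sym^p\sH$ and pushing out along the multiplication $\pi^*\sH\otimes\pi^*\Sym^p\sH\to\pi^*\Sym^{p+1}\sH$. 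Naturality of the connecting morphism combined with Lemma \ref{lemma:boundary} then identifies $d_1^{p,q}$ as the composition of the exterior comultiplication $\bigwedge^{2d-p-q}\sH\to\bigwedge^{2d-p-q-1}\sH\otimes\sH$ with the symmetric multiplication $\sH\otimes\Sym^p\sH\to\Sym^{p+1}\sH$, i.e.\ with the Koszul differential.

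With this identification, the $E_2$-page is computed row by row. For fixed $q$ the row is \eqref{Koszulcomplex} with $m=2d-q$ truncated at position $n$. Since the Koszul complex is exact for $m\ge 1$ and equals $\bQ$ in degree $0$ when $m=0$, the only surviving $E_2$-entries are $E_2^{0,2d}=\bQ$ and, for $q<2d-n$, $E_2^{n,q}=\coker d^{n-1}_{2d-q}$. Both are isolated from higher differentials by inspection: from $(0,2d)$ every target sits in total degree $>2d$ and no source has $p<0$; from $(n,q)$ every target has $p$-coordinate $>n$, outside the filtration range, and the only possible surviving incoming source would be $(0,2d)$, which would force $q=2d-n+1$, contradicting $q<2d-n$. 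Hence $E_2=E_\infty$. Reading off the abutment gives the first two cases of the statement directly; for $i=0$ the term $E_\infty^{n,-n}$ equals all of $E_1^{n,-n}=R^0\pi_!\bQ(d)\otimes\Sym^n\sH$, because its incoming $d_1$ has source $E_1^{n-1,-n}=R^{-1}\pi_!\bQ(d)\otimes\Sym^{n-1}\sH=0$.

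The main technical obstacle is the middle step: one must track the 2-step subquotient of $\Sym^n\Log{1}$ back to the Kummer extension of $\Log{1}$ explicitly enough that the Koszul differential emerges on the nose (rather than an undetermined scalar multiple or a different bilinear operation), using the combinatorics of symmetric powers together with the formula for $\Log{1}$ from Lemma \ref{lem:compute_once}.
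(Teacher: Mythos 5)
Your proposal is correct and follows essentially the same route as the paper's own proof: the same spectral sequence for the unipotent filtration, identification of the $E_1$-terms via $R^i\pi_!\bQ(d)\isom\bigwedge^{2d-i}\sH$, identification of the $d_1$-differentials with Koszul differentials by reducing the two-step subquotients of $\Sym^n\Log{1}$ to the extension $\Log{1}$ via Lemma \ref{lemma:boundary} (the paper organizes this as an induction on $n$, treating only the new differential at $p=n-1$ by the push-out description you use for all $p$), and then Koszul exactness plus the degeneration argument. The only caveat, which you yourself flag, is that with the canonical identification $\gr^p\Sym^n\Log{1}\isom\pi^*\Sym^p\sH$ the subquotient $F^p/F^{p+2}$ is the push-out of $\pi^*\Sym^p\sH\otimes\Log{1}$ only up to the nonzero factor $n-p$ (equal to $1$ in the case $p=n-1$ actually used in the paper's induction); since a nonzero rational multiple of the Koszul differential has the same kernel and cokernel, this does not affect the $E_2$-computation or the conclusion.
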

\begin{proof}  
The sheaf $\Log{n}$
has a filtration $F^\cdot\Log{n}$ such that the 
associated graded pieces are the $\pi^*\Sym^k\sH$ for 
$0\le k\le n$. We consider the associated spectral sequence
$$
E_1^{p,q}=R^{p+q}\pi_!\pi^*\Sym^{p}\sH(d)\Rightarrow R^{p+q}\pi_!\Log{n}(d).
$$
If we identify $R^{p+q}\pi_!\bQ(d)\isom \bigwedge^{2d-p-q}\sH$ we get 
\[
E_1^{\cdot,q}:0\to \bigwedge^{2d-q}\sH\xrightarrow{d_1^{0,q}}
\bigwedge^{2d-q-1}\sH\otimes \sH\xrightarrow{d_1^{1,q}} \ldots
\xrightarrow{d_1^{n-1,q}}
\bigwedge^{2d-q-n}\sH\otimes \Sym^n\sH,
\]
where the first term is $E_1^{0,q}$ etc.
We assume by induction on $n$ that the differentials $d_1^{p,q}$  in the spectral sequence for $\Log{n-1}$ are
the Koszul differentials $d^{p}_{2d-q}$ (up to sign). 
The case $n=1$ is Lemma \ref{lemma:boundary}.
Then the $d_1^{p,q}$ for $p\le n-2$ in the
spectral sequence for $\Log{n}$ coincide also with
the Koszul differentials $d^{p}_{2d-q}$ (up to sign). 
We claim that
this is also true for $d_1^{n-1,q}$.
The differentials $d_1^{n-1,q}$ in the spectral sequence are the connecting 
homomorphisms for $R^{n-1+q}\pi_!$ of the short exact sequence
\[
0\to \pi^*\Sym^{n}\sH\to F^{n-1}\Log{n}/F^{n+1}\Log{n} \to
\pi^*\Sym^{n-1}\sH\to 0.
\]
By construction  of $\Log{n}$  this short exact sequence is
isomorphic to the push-out of 
\[
0\to  \pi^*\Sym^{n-1}\sH\otimes \pi^*\sH\to  \pi^*\Sym^{n}\sH\otimes
\Log{1}\to  \pi^*\Sym^{n-1}\sH\to 0
\]
by the multiplication map $\pi^*\Sym^{n-1}\sH\otimes \pi^*\sH\to
\pi^*\Sym^{n}\sH$. In particular, the connecting homomorphisms
are the ones of $\Log{1}$ tensored with $\pi^*\Sym^{n-1}\sH$.
If one unravels the definitions one gets that
the $d_1^{n-1,q}$ are the Koszul differentials.

It follows that $E_1^{\cdot,q}$ is
the truncated Koszul complex and hence 
the only non-zero $E_2$-terms are $E_2^{0,2d}=\bQ$, 
$E_2^{n,q}=\coker\; d_1^{n-1,q}$ for $-n<q<2d-n$ and 
$E_2^{n,-n}=R^0\pi_!\bQ(d)\otimes \Sym^n\sH$. For the higher direct 
images we get accordingly
\[
R^i\pi_!\cLog^{(n)}(d)=\begin{cases}
\bQ&i=2d\\
\coker\; d_1^{n-1,i-n}&0<i<2d\\
R^0\pi_!\bQ(d)\otimes\Sym^n\sH&i=0,
\end{cases}
\]
which is the desired result.
\end{proof}
As a corollary we get the statement of Theorem \ref{thm:vanishing}:
\begin{cor}\label{cor:vanishing}
One has 
\[
R^i\pi_!\cLog \isom \begin{cases}
\bQ(-d) & i=2d\\
0 & i\neq 2d.
\end{cases}
\]
\end{cor}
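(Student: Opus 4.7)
The plan is to deduce the claimed vanishing from Proposition~\ref{Log-cohomology} by analysing the transition maps in the pro-system $(R^i\pi_!\Log{n}(d))_n$. For each $n\ge 0$ I apply $R\pi_!(\cdot)(d)$ to the short exact sequence
\[
0 \to \pi^*\Sym^{n+1}\sH \to \Log{n+1} \to \Log{n} \to 0
\]
and use the projection formula to identify $R^i\pi_!(\pi^*\Sym^{n+1}\sH)(d) \cong \bigwedge^{2d-i}\sH \otimes \Sym^{n+1}\sH$. The resulting long exact sequence relates $R^i\pi_!\Log{n+1}(d)$ to $R^i\pi_!\Log{n}(d)$ via an edge map from this tensor product into $R^i\pi_!\Log{n+1}(d)$.

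For $0\le i<2d$, Proposition~\ref{Log-cohomology} identifies $R^i\pi_!\Log{n+1}(d)$ with a quotient of $\bigwedge^{2d-i}\sH\otimes\Sym^{n+1}\sH$: the only surviving $E_\infty$-contribution in the spectral sequence for $\Log{n+1}$ lies in the top column $p=n+1$, namely as a quotient of $E_1^{n+1,\,i-n-1}=R^i\pi_!(\pi^*\Sym^{n+1}\sH)(d)$. Consequently the edge map in the long exact sequence is surjective, which by exactness forces the transition $R^i\pi_!\Log{n+1}(d)\to R^i\pi_!\Log{n}(d)$ to be zero. Hence the pro-system is pro-zero, so $R^i\pi_!\cLog=0$.

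For $i=2d$ both groups are canonically $\bQ$, and I would argue that the transition is the identity. The cleanest way is to use the augmentation $\Log{n}\to\Log{0}=\bQ$ coming from the bottom quotient of the unipotent filtration: applying $R^{2d}\pi_!(\cdot)(d)$ yields the $E_2^{0,2d}$-isomorphism $R^{2d}\pi_!\Log{n}(d)\xrightarrow{\sim} R^{2d}\pi_!\bQ(d)=\bigwedge^0\sH=\bQ$, and this map is manifestly compatible with the transitions $\Log{n+1}\to\Log{n}$. So the pro-system is constant at $\bQ$, giving $R^{2d}\pi_!\cLog=\bQ(-d)$.

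The main technical delicacy is the surjectivity of the edge map for $0\le i<2d$. Granted Proposition~\ref{Log-cohomology}, this reduces to the observation that, in the spectral sequence for $\Log{n+1}$, no higher differentials can leave the top column $p=n+1$ in total degrees $i<2d$: any such $d_r$ with $r\ge 2$ would land in a column $p\ge n+2$, which is absent in the truncated spectral sequence for $\Log{n+1}$. Hence the natural map from $E_1^{n+1,\,i-n-1}$ to its $E_\infty$-image is a genuine surjection, which is exactly the edge map in question.
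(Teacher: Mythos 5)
Your argument is correct and is essentially the paper's own proof: for $0\le i<2d$ you use Proposition~\ref{Log-cohomology} to see that $R^i\pi_!\pi^*\Sym^{n+1}\sH(d)\to R^i\pi_!\Log{n+1}(d)$ (the edge map) is surjective, so the transition maps in the pro-system vanish, while for $i=2d$ the transitions are isomorphisms onto $\bQ(-d)$. Your explicit verification via the augmentation $\Log{n}\to\Log{0}=\bQ$ that the $i=2d$ identification is compatible with the transition maps is a slightly more careful rendering of the same step in the paper.
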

\begin{proof} From the computation
of $R^{2d}\pi_!\Log{n}$ it follows that the transition maps
$R^{2d}\pi_!\Log{n}\isom R^{2d}\pi_!\Log{n-1}$ are all isomorphisms. In particular, $R^{2d}\pi_!\cLog\isom \bQ(-d)$.

It remains to show that  $R^i\pi_!\cLog=0$ for $i\neq 2d$
and for this it is enough to show that  $R^i\pi_!\Log{n}\to R^i\pi_!\Log{n-1}$ is
the zero map. Consider the long exact cohomology sequence of 
$$
0\to \pi^*\Sym^n\sH\to \Log{n}\to \Log{n-1}\to 0.
$$
By the computation of $R^i\pi_!\Log{n}$ in
Proposition \ref{Log-cohomology} the map 
$$
R^i\pi_!\Sym^n\sH\isom \bigwedge^{2d-i}\sH\otimes \Sym^n\sH\to R^i\pi_!\Log{n}
$$
is surjective, hence $R^i\pi_!\Log{n}\to R^i\pi_!\Log{n-1}$ is
the zero map.
\end{proof}

We now turn to the case where $G$ is an extension
of an abelian scheme by a torus
and hence $\Hh$ locally constant.
We discuss the necessary modifications of this proof to
get the statement for the higher direct images $R^i\pi_*\cLog$.
First note that one has by  Poincar\'e duality a perfect pairing 
\[
R^{i}\pi_*\bQ\otimes R^{2d-i}\pi_!\bQ\to \bQ(-d),
\]
which shows  $\bigoplus_i R^{i}\pi_*\bQ\isom \bigwedge^{i}\sH^{\vee}$. The dual of the quasi-isomorphism in
\eqref{eq:coh-splitting} gives the decomposition  
\begin{equation}\label{eq:coh-decomposition}
R\pi_*\pi^{*}\bQ\isom \bQ\oplus \tau_{>0}R\pi_*\pi^{*}\bQ.
\end{equation}
To identify the extension class 
of $\Log{1}\in \Ext^{1}_G(\bQ,\pi^{*}\sH)$ consider the evaluation
map $\ev:\sH\otimes \sH^{\vee}\to \bQ$ and its dual 
\[
\ev^{\vee}:\bQ\to \sH^{\vee}\otimes \sH.
\]
Note further that by duality one has 
\[
R\pi_*\pi^{*}\sH\isom (R\pi_!\pi^{!}\sH^{\vee})^{\vee}\isom (R\pi_!\pi^{!}\bQ\otimes\sH^{\vee})^{\vee}\isom R\pi_*\pi^{*}\bQ\otimes\sH.
\]
\begin{lemma}
The class of $\Log{1}\in \Ext^{1}_S(\bQ,R\pi_*\pi^{*}\sH)=
\Hom_S(\bQ, R\pi_*\pi^{*}\sH[1])$ is given by the composition 
\[
\bQ\xrightarrow{\ev^{\vee}}\sH^{\vee}\otimes \sH\to 
\tau_{>0}R\pi_*\pi^{*}\bQ\otimes \sH[1]\xrightarrow{\eqref{eq:coh-decomposition}}R\pi_*\pi^{*}\bQ\otimes \sH[1]
\]
where the arrow in the middle is induced by the map 
$\sH^{\vee}=R^{1}\pi_*\pi^{*}\bQ\to \tau_{>0}R\pi_*\pi^{*}\bQ[1]$.
\end{lemma}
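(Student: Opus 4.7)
The plan is to identify the displayed class via the universal characterization of $\Log{1}$ from \eqref{eq:Log-1}. Applying the adjunction $\Hom_G(\bQ,\pi^*\sH[1])=\Hom_S(\bQ,R\pi_*\pi^*\sH[1])$, the sequence \eqref{eq:Log-1} becomes a split short exact sequence
\[
0\to \Ext^1_S(\bQ,\sH)\xrightarrow{\pi^*}\Hom_S(\bQ,R\pi_*\pi^*\sH[1])\to \Hom_S(\sH,\sH)\to 0,
\]
split by $e^*$. Hence any class is determined by its image in $\Hom_S(\sH,\sH)$ together with its pull-back via $e^*$. By Definition \ref{defn:log}, $[\Log{1}]$ corresponds to $\id_\sH$ and lies in $\ker(e^*)$ thanks to the splitting $\one{1}$. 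Thus it suffices to verify both properties for the displayed composition.

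Under the running hypothesis on $G$, the sheaf $\sH$ is locally constant of finite rank and hence dualizable, so the projection formula gives $R\pi_*\pi^*\sH\cong R\pi_*\pi^*\bQ\otimes\sH$. Combined with \eqref{eq:coh-decomposition} this yields a canonical splitting
\[
R\pi_*\pi^*\sH\cong \sH\oplus \bigl(\tau_{>0}R\pi_*\pi^*\bQ\otimes\sH\bigr),
\]
under which the pull-back $e^*$ is projection onto the first summand (since $e^*$ is induced by the $\bQ$-component of the decomposition in \eqref{eq:coh-decomposition}), while the edge map to $\Hom_S(\sH,\sH)$ is obtained as $H^0$ of the morphism into $R\pi_*\pi^*\sH[1]$, using the Poincar\'e duality identification $R^1\pi_*\pi^*\bQ\cong \sH^\vee$ together with the standard adjunction $\Hom_S(\bQ,\sH^\vee\otimes \sH)=\Hom_S(\sH,\sH)$.

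The two verifications are now immediate. For the edge map: the middle arrow in the statement is by construction the canonical inclusion of the $H^0$-summand, and the final arrow induces the identity on $H^0=\sH^\vee\otimes\sH$; hence $H^0$ of the entire composition is $\ev^\vee$, which corresponds to $\id_\sH$. For $e^*$: the composition factors through $\tau_{>0}R\pi_*\pi^*\bQ\otimes\sH[1]$, which is a direct summand complementary to $\sH[1]$ in the decomposition above, so its projection to $\sH[1]$ vanishes. By the uniqueness coming from the split sequence, the composition equals $[\Log{1}]$.

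The main technical point is to pin down the edge map and the action of $e^*$ precisely after invoking the projection formula, in particular checking that the Poincar\'e duality isomorphism $R^1\pi_*\bQ\cong \sH^\vee$ is compatible with the description of the middle arrow. This is essentially the Verdier dual of the computation carried out in Lemma \ref{lem:compute_once} for the $R\pi_!$-side, and once these identifications are in place the argument reduces to tracking $H^0$ of a morphism and a decomposition into direct summands.
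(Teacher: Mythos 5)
Your argument is correct in outline, but it takes a different route from the paper. The paper simply quotes Lemma \ref{lem:compute_once}, which identifies $[\Log{1}]$ with the explicit composition \eqref{eq:comp} on the $R\pi_!$-side, and then obtains the lemma by dualizing that composition, using $(\tau_{\le -1}R\pi_!\pi^!\bQ)^\vee\isom\tau_{>0}R\pi_*\pi^*\bQ$ and ``unravelling'' the definition. You instead re-run the characterization argument: via the $\pi^*\dashv R\pi_*$ adjunction and the decomposition \eqref{eq:coh-decomposition} tensored with $\sH$ (legitimate here, since in this part of Section \ref{proof:vanishing} the group is an extension of an abelian scheme by a torus, so $\sH$ is lisse and the projection formula applies), you show a class is determined by its edge image in $\Hom_S(\sH,\sH)$ together with $e^*$, and then check that the displayed composition has edge image $\ev^\vee=\id_\sH$ and vanishes under $e^*$ because it factors through the summand complementary to $\sH[1]$. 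What your approach buys is independence from the explicit form of \eqref{eq:comp}: you only use the defining properties of $\Log{1}$ and the splitting behaviour of \eqref{eq:coh-decomposition}. What it does not buy is an escape from the duality bookkeeping: the defining condition ``image in $\Hom_S(\sH,\sH)$ is the identity'' is phrased via the $R\pi_!$-side Leray edge map of \eqref{eq:Log-1}, so your identification of it with ``$H^0$ of the adjoint map, read through $R^1\pi_*\pi^*\bQ\isom\sH^\vee$'' is exactly the Poincar\'e-duality compatibility you flag at the end; this is the real content of the lemma and is also the step the paper compresses into ``unravels the definition''. So your proof is at the same level of rigour as the paper's, just organized around uniqueness rather than around the already-computed class; it would be worth writing out the edge-map comparison once, since both your argument and the published one lean on it.
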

\begin{proof}
By definition the extension class
$\Log{1}\in \Hom_S(R\pi_!\pi^{!}\bQ,\sH[1] )$ is given by the map
in \eqref{eq:comp}, which induces 
\[
\bQ\to (\tau_{\le -1}R\pi_!\pi^{!}\bQ)^{\vee}\otimes \sH[1]\isom \tau_{>0}R^{1}\pi_*\pi^{*}\bQ\otimes \sH[1].
\]
If one unravels the definition of the map in \eqref{eq:comp} one gets the map in the lemma.
\end{proof}
Let $h:=\dim_\Q\sH$ be the dimension of the local system  $\sH$, then the pairing $\bigwedge^i\sH^{\vee}\otimes\bigwedge^{h-i}\sH^{\vee}\to
\bigwedge^h\sH^{\vee}$ induces an isomorphism
\[
R^i\pi_*\pi^{*}\bQ\isom \bigwedge^i\sH^\vee\isom \bigwedge^{h-i}\sH\otimes\bigwedge^h\sH^{\vee} .
\]
The computation of $R^i\pi_*\cLog^{(n)}(d)$ is exactly the same as before, once we have identified the connecting homomorphisms
\[
R^i\pi_*\bQ\to R^{i+1}\pi_*\pi^*\sH 
\]
of the extension 
\[
0\to \pi^{*}\sH\to \Log{1}\to \bQ\to 0.
\]
\begin{lemma} Using the above identification 
$R^i\pi_*\pi^{*}\bQ\isom\bigwedge^{h-i}\sH\otimes\bigwedge^h\sH^{\vee} $ the connecting homomorphism
\[
\bigwedge^{h-i}\sH\otimes\bigwedge^h\sH^{\vee} \to
\bigwedge^{h-i-1}\sH\otimes\sH\otimes \bigwedge^h\sH^{\vee} 
\]
is induced by the comultiplication in $\bigwedge^{\cdot}\sH$.
\end{lemma}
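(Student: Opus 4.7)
The proof follows the structure of Lemma~\ref{lemma:boundary}, but using $R\pi_{*}$ in place of $R\pi_{!}$ and invoking the preceding lemma's description of $\Log{1}$ in terms of $\ev^{\vee}$ instead of Lemma~\ref{lem:compute_once}. Applying $R\pi_{*}$ to the defining triangle $\pi^{*}\sH\to\Log{1}\to\bQ\xrightarrow{+1}\pi^{*}\sH[1]$ on $G$, the connecting morphism is $R\pi_{*}\Log{1}\colon R\pi_{*}\pi^{*}\bQ\to R\pi_{*}\pi^{*}\sH[1]$. Using the projection formula $R\pi_{*}\pi^{*}\sH\isom R\pi_{*}\pi^{*}\bQ\otimes\sH$, valid because $\sH$ is lisse, together with the standard fact that a connecting morphism is cup product with the extension class, this morphism is $R\pi_{*}\pi^{*}\bQ$-linear and hence determined by its image of $1\in R^{0}\pi_{*}\pi^{*}\bQ$, i.e.\ by the adjoint class $\widetilde{\Log{1}}\in\Hom_{S}(\bQ,R\pi_{*}\pi^{*}\bQ\otimes\sH[1])$.

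By the preceding lemma, $\widetilde{\Log{1}}$ is the composition $\bQ\xrightarrow{\ev^{\vee}}\sH^{\vee}\otimes\sH\to R^{1}\pi_{*}\pi^{*}\bQ\otimes\sH\hookrightarrow R\pi_{*}\pi^{*}\bQ\otimes\sH[1]$. Identifying $R^{\cdot}\pi_{*}\pi^{*}\bQ\isom\bigwedge^{\cdot}\sH^{\vee}$ as graded algebras (the multiplication being the wedge product), and writing $\ev^{\vee}=\sum_{k}e_{k}^{\vee}\otimes e_{k}$ in terms of dual bases of $\sH^{\vee}$ and $\sH$, the induced map on cohomology takes the shape
\[\bigwedge^{i}\sH^{\vee}\to\bigwedge^{i+1}\sH^{\vee}\otimes\sH,\qquad x\mapsto\sum_{k}(x\wedge e_{k}^{\vee})\otimes e_{k}.\]

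It then remains to transport this formula through the Poincar\'e-duality isomorphism $\bigwedge^{i}\sH^{\vee}\isom\bigwedge^{h-i}\sH\otimes\bigwedge^{h}\sH^{\vee}$, obtained by contracting with a fixed generator of $\bigwedge^{h}\sH^{\vee}$. The linear-algebra fact I shall use is that, under this identification, the operation ``wedge with $\xi\in\sH^{\vee}$'' on the left corresponds, up to a uniform sign independent of $\xi$, to ``interior multiplication by the vector dual to $\xi$'' on the first tensor factor on the right. Consequently $\sum_{k}(x\wedge e_{k}^{\vee})\otimes e_{k}$ is converted into the comultiplication of the exterior algebra $\bigwedge^{\cdot}\sH$ applied to the $\bigwedge^{h-i}\sH$-factor, tensored with $\id$ on $\bigwedge^{h}\sH^{\vee}$, which is the claim. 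This last step is the main obstacle: it is a routine but sign-sensitive verification of the familiar principle that wedge and interior product are exchanged under the Poincar\'e-duality pairing of an exterior algebra; everything preceding it is a mechanical translation from the explicit formula for $\Log{1}$ in the previous lemma.
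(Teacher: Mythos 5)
Your proposal is correct and takes essentially the same route as the paper: identify the connecting homomorphism as cup product with the extension class of $\Log{1}$, which by the preceding lemma is $\ev^{\vee}$ viewed in $R^{1}\pi_*\pi^{*}\bQ\otimes\sH\isom\sH^{\vee}\otimes\sH$, and then translate through the Poincar\'e-duality identification where wedging with $\sH^{\vee}$ becomes contraction on $\bigwedge^{\cdot}\sH$, i.e.\ the comultiplication (up to sign, as in the analogous $R\pi_!$ lemma). You merely spell out the dual-basis computation that the paper compresses into ``if we make the identifications explicit, we get the desired formula.''
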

\begin{proof}
The connecting homomorphism is the map 
\[
R^i\pi_*\bQ\to R^i\pi_*\bQ\otimes R^{1}\pi_*\bQ\otimes \sH\to  R^{i+1}\pi_*\bQ\otimes\sH
\]
induced by the cup-product. If we make the identifications explicit, we get the desired formula.
\end{proof}
Exactly as in the proof of
Proposition \ref{Log-cohomology} we get
\[
R^i\pi_*\cLog^{(n)}\isom\begin{cases}
\bigwedge^h\sH^{\vee}&i=h\\
\bigwedge^h\sH^{\vee}\otimes \coker\; d^{n-1}_{2h-i+n} & 0<i< h\\
\bigwedge^h\sH^{\vee}\otimes \Sym^n\sH & i=0.
\end{cases}
\]
\begin{cor}\label{cor:vanishing_semiabelian}
Let $G$ be an extension of an abelian scheme by a torus. Then
\[
R^i\pi_*\cLog \isom \begin{cases}
\bigwedge^h\sH^{\vee}& i=h\\
0 & i\neq h.
\end{cases}
\]
\end{cor}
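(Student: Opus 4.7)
The plan is to mimic exactly the argument used for Corollary \ref{cor:vanishing}, replacing $R\pi_!$ by $R\pi_*$ and using the explicit computation of $R^i\pi_*\cLog^{(n)}$ just established via the Koszul-type spectral sequence.

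First, I would treat the top degree $i=h$. From the formula
\[
R^h\pi_*\Log{n}\isom \bigwedge^h\sH^{\vee},
\]
which is independent of $n$, I would check that the transition maps $R^h\pi_*\Log{n}\to R^h\pi_*\Log{n-1}$ induced by $\Log{n}\to\Log{n-1}$ are isomorphisms. This follows from the long exact sequence attached to
\[
0\to \pi^*\Sym^n\sH\to \Log{n}\to \Log{n-1}\to 0,
\]
because $R^{h+1}\pi_*\pi^*\Sym^n\sH=0$ (there is no cohomology of $\sH$ beyond degree $h$) and $R^h\pi_*\pi^*\Sym^n\sH=\bigwedge^h\sH^\vee\otimes\Sym^n\sH$ is killed by the transition map by the same Koszul surjectivity argument as below. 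Passing to the pro-limit yields $R^h\pi_*\cLog\isom \bigwedge^h\sH^\vee$.

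Next, for $0\le i<h$, I would show that each transition map $R^i\pi_*\Log{n}\to R^i\pi_*\Log{n-1}$ is zero, so that the corresponding pro-object vanishes. For this, I would consider again the long exact sequence attached to the above short exact sequence and argue that the map
\[
R^i\pi_*\pi^*\Sym^n\sH\isom \bigwedge^h\sH^\vee\otimes\bigwedge^{h-i}\sH\otimes\Sym^n\sH \longrightarrow R^i\pi_*\Log{n}
\]
is surjective. This is exactly the content of the Koszul calculation carried out for $R^i\pi_*\cLog^{(n)}$: the only contribution to $R^i\pi_*\Log{n}$ for $0<i<h$ comes from the cokernel of the Koszul differential landing in $\bigwedge^{h-i}\sH\otimes\Sym^n\sH$, and for $i=0$ the single term $\bigwedge^h\sH^\vee\otimes \Sym^n\sH$ of the $E_2$-page already accounts for the whole thing. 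Surjectivity of the edge map forces $R^i\pi_*\Log{n}\to R^i\pi_*\Log{n-1}$ to be zero.

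The main (modest) obstacle is checking that the identification of the connecting homomorphisms as Koszul differentials, which was proved in $R\pi_!$ case, really carries over to the $R\pi_*$ setting in the form given by the preceding lemma. Once this is granted, the proof is formal: degeneration of the spectral sequence and surjectivity of the edge maps give vanishing of the pro-system $(R^i\pi_*\Log{n})_n$ for $i\neq h$ and stability for $i=h$, which is precisely the statement of the corollary.
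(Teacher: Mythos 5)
Your plan is correct and is essentially the paper's own argument: the paper literally proves the corollary by "the same argument as in Corollary \ref{cor:vanishing}", namely that the edge-map surjectivity $R^i\pi_*\pi^*\Sym^n\sH\to R^i\pi_*\Log{n}$ (from the degenerate Koszul spectral sequence) forces the transition maps to be zero for $i\neq h$, while in degree $h$ they are isomorphisms. One small clean-up: for injectivity in degree $h$, the right statement is that the image of $R^h\pi_*\pi^*\Sym^n\sH$ in $R^h\pi_*\Log{n}$ is the term $E_2^{n,h-n}$, which vanishes because that row is the full (exact) Koszul complex --- this is what your phrase "killed by the transition map" should be saying (alternatively, a surjection between the rank-one local systems $\bigwedge^h\sH^{\vee}$ is automatically an isomorphism).
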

\begin{proof}
This follows by the same argument as in Corollary \ref{cor:vanishing}.
\end{proof}
\subsection{Proof of Theorem \ref{thm:vanishing-motivic}.}\label{sec:motivic_proof}
We turn to the motivic setting with either $S$ of characteristic $0$
or $G$ affine.
As always, $G/S$ is a smooth commutative group scheme with connected fibres.

\begin{lemma}\label{lem:precise_weights}
Let $S$ be a scheme of characteristic $0$ or $G$ affine.
Let $a\in\Z$, $a\neq 0,\pm 1$. Then the generalized $a^0$-eigenspace
for the operation of $[a]$ on  $R\pi_!\Log{n}$ is isomorphic to $\Q(-d)[-2d]$. 
The generalized $a^j$-eigenspace vanishes for
$j>n+\kd(G)$ and for $0<j<n$.
\end{lemma}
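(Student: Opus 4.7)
The description of the generalized $a^0$-eigenspace is Proposition \ref{eigenweights}(4), and the vanishing for $j>n+\kd(G)$ is contained in Proposition \ref{eigenweights}(3). The substantive task is the vanishing of $(R\pi_!\Log{n})^{a^j}$ for $0<j<n$.

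The first step will be to reduce to a ``top'' case. Taking the generalized $a^j$-eigenspace (via Proposition \ref{triangulated-eigenspaces}) of the triangle
\[R\pi_!\pi^*\Sym^n\Hh\to R\pi_!\Log{n}\to R\pi_!\Log{n-1},\]
the leftmost term contributes nothing when $j<n$ by Proposition \ref{eigenweights}(2), so $(R\pi_!\Log{n})^{a^j}\cong(R\pi_!\Log{n-1})^{a^j}$. Iterating through the analogous triangles for $\Log{n-1},\Log{n-2},\dots$ (each step being legal precisely because $j<m$ at that stage) yields $(R\pi_!\Log{n})^{a^j}\cong(R\pi_!\Log{j})^{a^j}$ for every $n>j\geq 1$. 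The claim thus reduces to showing $(R\pi_!\Log{j})^{a^j}=0$ for each $j\geq 1$; the case $j=1$ is Proposition \ref{eigenweights}(5).

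For $j\geq 2$, the plan is to analyze the full unipotent filtration $F^\bullet\Log{j}$ whose graded pieces are $\pi^*\Sym^k\Hh$ ($k=0,\ldots,j$). Applying $R\pi_!$ and passing to the $a^j$-eigenspace produces a tower of exact triangles on $S$ whose $k$-th graded piece, by Proposition \ref{eigenweights}(1) and the projection formula, is of the form $\Sym^{j-k}\Hh\otimes\Sym^k\Hh(-d)[j-k-2d]$ for $0\leq k\leq j$ and zero for $k>j$. A motivic counterpart of Lemma \ref{lemma:boundary} -- using the coproduct on $R\pi_!\pi^!\bQ$ induced by the diagonal $\Delta:G\to G\times G$, which already underlies Definition \ref{defn:logn_motivic} and the computation in Proposition \ref{prop:real} -- then identifies the connecting maps of this tower as the motivic Koszul differentials. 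Since the Koszul complex on the dualizable object $\Hh$ in weight $j$ is acyclic in any $\bQ$-linear rigid tensor category, the iterated extension it presents collapses to zero, giving $(R\pi_!\Log{j})^{a^j}=0$.

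The hard part will be this last collapse: rigorously turning acyclicity of the Koszul complex into the vanishing of an iterated extension in $\DA(S)$ without recourse to a motivic $t$-structure. In practice, this should be carried out by induction along the filtration, showing at each stage that the successive partial cofiber is a shift of the next Koszul quotient, so that the totalization is zero. The hypothesis that $S$ is of characteristic $0$ or $G$ is affine enters only through Proposition \ref{eigenweights}, which itself relies on the decomposition $M_S(G)=\bigoplus_iM_i(G)$ from \cite{AHP}.
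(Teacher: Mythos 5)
Your reduction to the single statement $(R\pi_!\Log{j})^{a^j}=0$ (by passing to generalized $a^j$-eigenspaces in the triangles $R\pi_!\pi^*\Sym^m\Hh\to R\pi_!\Log{m}\to R\pi_!\Log{m-1}$ for $m>j$) is correct and consistent with Propositions \ref{eigenweights} and \ref{triangulated-eigenspaces}. But the step you yourself flag as ``the hard part'' is a genuine gap, and it is exactly the point where the argument cannot be completed as proposed. In $\DA(S)$ there is no motivic $t$-structure, so $(R\pi_!\Log{j})^{a^j}$ is only given as an iterated cone on the shifted graded pieces, and such a Postnikov-type tower is not determined by its first-page differentials: even granting a motivic identification of the connecting maps with the Koszul differentials, acyclicity of the complex of graded pieces does not force the totalization to vanish, because the higher obstructions (the analogues of the higher differentials of the spectral sequence, i.e.\ Toda-bracket data) are not controlled without a $t$-structure or some conservativity input. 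Note also that your closing sentence is incorrect: Proposition \ref{eigenweights} requires no characteristic-$0$ or affineness hypothesis, so in your proposal the hypothesis on $S$ or $G$ is never actually used --- and if your argument worked it would prove the lemma unconditionally, which the authors explicitly indicate is out of reach (it would bypass homological versus numerical equivalence in positive characteristic).

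The paper's proof takes a different route precisely to supply the missing input. After reducing via \cite[Lemma A.6]{AHP} to geometric points $\Spec k$ and passing to $\DM(k,\Q)$, one shows that $R\pi_!\Log{n}$ lies in the triangulated category of abelian motives (when the characteristic is $0$) resp.\ of mixed Tate motives (when $G$ is affine). On these subcategories the $\ell$-adic realization is conservative --- by \cite[Theorem 1.16]{Wi14}, which rests on Kimura finiteness and homological $=$ numerical equivalence for abelian varieties (this is where characteristic $0$ enters), resp.\ by the slice functors of \cite{HuKa}. The vanishing of the generalized $a^j$-eigenspaces for $0<j<n$ is then read off from the sheaf-theoretic computation of Proposition \ref{Log-cohomology}, where the Koszul spectral-sequence argument is legitimate because a $t$-structure is available: in degrees $i<2d$ the cohomology is a quotient of $\bigwedge^{2d-i}\sH_\ell\otimes\Sym^n\sH_\ell$, on which $[a]$ acts by $a^{2d-i+n}$ with exponent $>n$. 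So your instinct to exploit the Koszul complex is the right one, but it must be executed after realization, with a conservativity theorem transporting the conclusion back to motives.
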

This is a refined version of the vanishing in Proposition \ref{eigenweights} (3). Its proof relies on much deeper input from the theory of motives. 
\begin{proof}
The computation of the generalized $a^0$ eigenspace was carried out
in Proposition \ref{eigenweights} (3).
The vanishing for $j>n$ follows simply by induction from the statement
for $R\pi_!\Sym^i\Hh$, see
Proposition \ref{eigenweights} (1).

We now turn to the essential part of the statement, with
$0<j< n$. We claim that the $a^j$-eigenspace vanishes.
By \cite[Lemma A.6]{AHP} it is enough to prove the statement
after base change to geometric points $\bar{s}:k\to S$. Moreover,
$\bar{s}^*$ is a tensor functor commuting with $R\pi_!$ and
$\bar{s}^*\Hh_{G/S}=\Hh_{G_{k}/k}$ by \cite[Proposition 2.7]{AHP}.
Hence we may assume without loss of generality that $S=\Spec k$ with
$k$ algebraically closed. We have been working in categories of \'etale
motives without transfers so far. In the case of a perfect ground field
$k$, the ''adding transfer'' functor is an equivalence of categories.
Hence we can argue in Voevodsky's orginal category of geometric motives
$\DM(k,\Q)$ from now on. 

We claim that the object $R\pi_!\Log{n}$ is contained in the subcategory
of abelian motives in the sense of \cite[Definition 1.1]{Wi14}. It
is the thick tensor triangulated subcategory of the category of
geometric motives generated by $\Q(r)$ for $r\in\Z$ and the Chow motives
of abelian varieties.
We can verify this by induction on $n$. We have computed
$R\pi_!\pi^*\Sym^i\Hh$ in the proof of Proposition \ref{eigenweights}.
Hence it suffices to establish the claim for $M_1(G)$. 
By \cite[Lemma 7.4.5]{AEH}, the motive $M_1(G)$ agrees with
the 1-motive of the semiabelian part $G^{sa}$ of $G$. In the semi-abelian
case, the sequence
\[ 1\to T\to G^{sa}\to A\]
with $T$ a torus and $A$ an abelian variety induces an exact triangle
$M_1(T)\to M_1(G)\to M_1(A)$. The torus $T$ is split because we have
assumed $k$ to be algebraically closed. Hence $M_1(T)=\Q(1)^r$ is in
the category of abelian motives. The motive $M_1(A)$ is a Chow motive
as a direct summand of the motive of $A$, hence also in the category of
abelian motives.

Let $\ell$ be a prime invertible
in $k$.
If $S$ is of characteristic $0$, we have verified the assumptions of \cite[Theorem 1.16]{Wi14}.
 By loc.cit. the $\ell$-adic realization $H^*R_\ell$ is conservative.
We have reduced the assertion to the same vanishing in the $\ell$-adic
setting. 
If $G$ is affine, then its motive is a mixed Tate motive. Again
the $\ell$-adic realization is conservative; this time via the conservative slice functors $c_n$ of \cite[Section 5]{HuKa}. 

Consider the computation of $R\pi_!\Log{n}_\ell$ in Proposition
\ref{Log-cohomology}. The proof shows that
the cohomology in degree $i<2d$ is
given by $E_2^{n,i-n}$ and a functorial quotient of 
$E_1^{n,i-n}=\bigwedge^{2d-i}\sH_\ell\otimes\Sym^{n}\sH_\ell$. The operation
of $[a]$ on this term is by multiplication by $a^{2d-i+n}$. Recall
that $0\leq i<2d$. There is no contribution
to the $a^j$-eigenspace for $0<j<n$.
\end{proof}

\begin{proof}[Proof of Theorem \ref{thm:vanishing-motivic}.]
We want to show that
\[ R\pi_!\cLog\to R\pi_!\Q\xrightarrow{\res_e} e^*\Q(-d)[-2d]\]
is an isomorphism in $\DA(S)$. We pass to generalized $a$-eigenspaces
for the operation of $a\in\Z$. It suffices to show:
\begin{enumerate}
\item The $a^0$-eigenspace of $R\pi_!\Log{n}$ is equal to $\Q(-d)[-2d]$
for all $n$.
\item For $i\geq 1$, the pro-object given by the generalized $a^i$-eigenspaces of $R\pi_!\Log{n}$ is isomorphic to $0$.
\end{enumerate}
The first claim was shown in Proposition \ref{eigenweights} (3).
The second claim is a consequence of Lemma \ref{lem:precise_weights}.
\end{proof}

\begin{rem}It is tempting to remove the characteristic $0$ hypothesis
from the result. It enters the argument
via the proof of \cite[Theorem 1.13]{Wi14}, where it is used that homological and numerical equivalence agree on abelian varieties. This is open in positive characteristic.
\end{rem}

\begin{appendix}

\section{Eigenspace decomposition}\label{appA}
The aim of this section is verify the existence of decomposition
into generalized eigenspaces in the setting of triangulated categories.

\begin{defn}Let $\Ah$ be a pseudo-abelian $\Q$-linear additive category. Let $X$ be
an object and $\phi:X\to X$ an endomorphism. We say that $X$ has
a {\em finite decomposition into generalized $\phi$-eigenspaces} if
there is a $\phi$-equivariant direct sum decomposition
\[ X=\bigoplus_{i=1}^n X_i\]
together with a sequence $\alpha_1,\dots, \alpha_n$ of pairwise distinct rational numbers (''eigenvalues'') and a sequence $m_1,\dots,m_n$ of positive integers such that
$(\phi-\alpha_i)^{m_i}$ vanishes on $X_i$. We call $X_i$ the {\em generalized eigenspace for the eigenvalue} $\alpha_i$.
\end{defn}

\begin{ex}
Let $\Ah$ be the category of finitely generated $\Q$-vector spaces. 
Every object has a finite decomposition into generalized $\phi$-eigenspaces
by putting $\phi$ in Jordan normal form.
\end{ex}

This is not the most general notion one could imagine, but it suffices
for our application. The condition is equivalent to the following:
We view $X$ as a
$\Q[T]$-module with $T$ operating via $\phi$.
The object $X$ has a finite decomposition into generalized $\phi$-eigenspaces
if and only if the operation of $\Q[T]$ factors via an Artin quotient
$\Q[T]/I$ with $I$ of the form $\prod_{i=1}^n(T-\alpha_i)^{m_i}$.
By the Chinese Remainder Theorem, we have a ring isomorphism
\[\Q[T]/I=\prod_{i=1}^n \Q[T]/(T-\alpha_i)^{m_i}\ .\]
The decomposition of $X$ is induced from the decomposition
of $1\in\Q[T]/I$ into projectors. In particular, the decomposition
is unique if it exists. 

\begin{lemma}Let $A\to B\to C$ be an exact sequence of (possibly infinite dimensional) 
$\Q$-vector spaces with operation of an endomorphism $\phi$.
Assume that $A$ and $C$ admit a finite
decomposition into generalized $\phi$-eigenspaces. Then so does $B$.
\end{lemma}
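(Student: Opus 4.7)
The plan is to exploit the reformulation given just above the lemma: an object $X$ has a finite decomposition into generalized $\phi$-eigenspaces iff the action of $\Q[T]$ factors through some $\Q[T]/I$ with $I=\prod(T-\alpha_i)^{m_i}$, in which case the summands are cut out by the CRT-projectors $e_i\in\Q[T]/I$. This point of view makes the lemma purely a statement about $\Q[T]$-modules killed by such an $I$.

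First I would record an inheritance lemma: if $V$ admits a decomposition with annihilator $I$, then every $\phi$-stable subspace $W\subset V$ and every quotient $V/W$ also admits one (with the same $I$). The key is that the idempotents $e_i\in\Q[T]/I$ are polynomials in $\phi$, hence preserve $W$ and descend to $V/W$; applying $1=\sum e_i$ yields the desired decomposition, and $(\phi-\alpha_i)^{m_i}$ still vanishes on $e_iW$ and $e_i(V/W)$.

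Using this, I would reduce the lemma to the case of a short exact sequence. Exactness of $A\to B\to C$ at $B$ gives a short exact sequence
\[ 0\to B'\to B\to B''\to 0\]
with $B'=\mathrm{im}(A\to B)$ a $\phi$-stable quotient of $A$ and $B''=B/B'$ a $\phi$-stable subspace of $C$; by the inheritance step both $B'$ and $B''$ inherit finite decompositions into generalized $\phi$-eigenspaces.

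Finally I would produce an annihilating polynomial for $B$. Let $I_{B'}$ and $I_{B''}$ be the annihilators furnished by the decompositions of $B'$ and $B''$, and set $I=I_{B'}\cdot I_{B''}$, regrouped as $\prod_{k}(T-\gamma_k)^{p_k}$ with the $\gamma_k$ pairwise distinct. For any $b\in B$, the image of $I_{B''}(\phi)b$ in $B''$ vanishes, so $I_{B''}(\phi)b\in B'$; then $I_{B'}(\phi)$ annihilates it, whence $I(\phi)$ acts as zero on $B$. Thus the $\Q[T]$-action on $B$ factors through $\Q[T]/I\cong\prod_k\Q[T]/(T-\gamma_k)^{p_k}$, and the CRT idempotents produce the required finite decomposition of $B$ into generalized $\phi$-eigenspaces. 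The main subtlety is the inheritance claim for subspaces and quotients; once this is in hand, combining annihilators via the product is straightforward.
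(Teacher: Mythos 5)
Your proof is correct and follows essentially the same route as the paper: the paper simply observes that if $I$ annihilates $A$ and $J$ annihilates $C$, then exactness at $B$ gives $IJ(\phi)B=0$, so $B$ is a $\Q[T]/(IJ)$-module and the CRT idempotents give the decomposition. Your detour through $B'=\mathrm{im}(A\to B)$ and $B''=B/B'$ with the inheritance step is harmless but unnecessary, since one can use the annihilators of $A$ and $C$ directly in exactly the computation you perform at the end.
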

\begin{proof}By assumption $A$ is a $\Q[T]/I$-module and
$C$ a $\Q[T]/J$ module with $I$ and $J$ of the special shape above.
It is easy to check that $IJ$ annihilates $B$, hence $B$ also
admits a decomposition into generalized $\phi$-eigenspaces.
\end{proof}
\begin{prop}\label{triangulated-eigenspaces}Let $\Th$ be a $\Q$-linear pseudo-abelian triangulated category.
Let $A\to B\to C\to A[1]$ be an exact triangle and $\phi$ an endomorphism
of the triangle. Assume that $A$ and $C$ admit a finite decomposition into
generalized $\phi$-eigenspaces. Then so does $B$. Given $\alpha\in\Q$ the triangle
of generalized eigenspaces for the eigenvalue $\alpha$ is distinguished.
\end{prop}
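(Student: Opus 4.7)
The plan is to reduce the problem to a statement about the action of polynomials in $\phi$ on $B$ via the long exact sequences, then to extract the decomposition from the Chinese Remainder Theorem and pseudo-abelianness.

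Write the hypotheses on $A$ and $C$ in polynomial form: there exist polynomials
\[
p_A(T)=\prod_i (T-\alpha_i)^{m_i},\qquad p_C(T)=\prod_j (T-\beta_j)^{n_j}
\]
of the allowed shape such that $p_A(\phi_A)=0$ and $p_C(\phi_C)=0$. First I would show that the product $(p_A p_C)(\phi_B)=0$. The crucial observation is that, because $\phi$ is an endomorphism of the triangle $A\to B\to C\to A[1]$, the evaluation of any polynomial $q\in\Q[T]$ at $\phi$ is itself a morphism of triangles. Hence composing $p_C(\phi_B)\colon B\to B$ with $B\to C$ gives $p_C(\phi_C)\circ(B\to C)=0$. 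By the long exact sequence obtained from applying $\Hom(B,-)$ to the triangle, $p_C(\phi_B)$ factors as $B\xrightarrow{h} A\xrightarrow{i} B$ for some $h$. Then
\[
(p_A p_C)(\phi_B)=p_A(\phi_B)\circ p_C(\phi_B)=p_A(\phi_B)\circ i\circ h=i\circ p_A(\phi_A)\circ h=0,
\]
using $\phi$-equivariance of $i$ in the last equality.

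Next I would extract the decomposition. The polynomial $p_Ap_C$ has the required shape $\prod_\gamma(T-\gamma)^{k_\gamma}$ with distinct $\gamma$, so by the Chinese Remainder Theorem
\[
\Q[T]/(p_Ap_C)\;\isom\;\prod_\gamma \Q[T]/(T-\gamma)^{k_\gamma}.
\]
The action of $\Q[T]$ on $B$ via $\phi$ factors through the left hand ring, so the orthogonal idempotents on the right give commuting idempotent endomorphisms $e_\gamma\in\End(B)$ with $\sum_\gamma e_\gamma=\id_B$ and $(\phi_B-\gamma)^{k_\gamma}\circ e_\gamma=0$. By pseudo-abelianness, the $e_\gamma$ split $B=\bigoplus_\gamma B_\gamma$ as a $\phi$-equivariant direct sum, exhibiting the finite decomposition into generalized $\phi$-eigenspaces.

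For the final claim I would show that the morphisms of the triangle respect the eigenspace decomposition and then invoke the fact that direct summands of distinguished triangles in a pseudo-abelian triangulated category are distinguished. Given $\alpha\neq\beta$, any $\phi$-equivariant morphism $f\colon A_\alpha\to B_\beta$ satisfies $(\phi_{B_\beta}-\alpha)^{m_\alpha}\circ f=0$; but on $B_\beta$ the operator $\phi-\alpha$ is invertible because $(\phi-\beta)^{n_\beta}=0$ and $\alpha\neq\beta$ (concretely, $(T-\alpha)$ is a unit in $\Q[T]/(T-\beta)^{n_\beta}$). Hence $f=0$, and similarly for the other maps of the triangle. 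Thus the triangle decomposes as a direct sum over $\gamma$ of triangles $A_\gamma\to B_\gamma\to C_\gamma\to A_\gamma[1]$, and each summand is distinguished.

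The main obstacle is the step that a direct summand of a distinguished triangle in a pseudo-abelian triangulated category is distinguished; this is a standard but non-trivial fact (Balmer--Schlichting, or the Karoubianisation of triangulated categories), and I would simply cite it. The rest is essentially linear algebra over $\Q[T]$ translated into the triangulated setting through the long exact sequences.
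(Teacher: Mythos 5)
Your proof is correct and follows essentially the same route as the paper: both arguments use the long exact sequence for $\Hom_\Th(B,-)$ to show that a product polynomial of the prescribed shape annihilates $B$, then split $B$ via the Chinese Remainder idempotents and pseudo-abelianness, and finally obtain the eigen-triangles as direct summands of the original distinguished triangle. The only differences are cosmetic: you carry out the annihilation computation directly in $\Th$ (where the paper applies its vector-space lemma to the Hom-sequence) and you make explicit the appeal to the Balmer--Schlichting fact that summands of distinguished triangles are distinguished, which the paper uses implicitly in its ``summing over all $\alpha$'' step.
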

\begin{proof}
Consider the exact sequence of $\Q$-vector spaces 
\[ \Hom_\Th(B,A)\to \Hom_\Th(B,B)\to\Hom_\Th(B,C)\ .\]
By functoriality, it has an operation of $\phi$. As $A$ and $C$ have
a decomposition, so have $\Hom_\Th(B,A)$ and $\Hom_\Th(B,C)$. By the
lemma this implies that $\Hom_\Th(B,B)$ has decomposition. Equivalently,
$\Hom_\Th(B,B)$ is annihilated by an ideal $I$ of the special form above.
In particular, this is the case for $\id_B$ and hence for $B$. This
means that $B$ is an $\Q[T]/I$-module, or equivalently that it
admits a finite decomposition into generalized $\phi$-eigenspaces.

The ideal $I$ can be chosen such that it annihilates all of $A$, $B$, $C$.
This means that $\Q[T]/I$ operates on the exact triangle. The
decomposition of $B$ is compatible with the exact triangle.
Summing the triangles for all $\alpha\in\Q$ we get back
the original triangle. Hence the indivual triangles for fixed $\alpha$
are distinguished.
\end{proof}

\begin{lemma}\label{simultan}
Let $\Ah$ be a pseudo-abelian $\Q$-linear additive category. Let
$X$ be an object and $\phi:X\to X$ and $\psi:X\to X$ commuting endomorphisms.
Assume that $X$ has a finite decomposition into generalized eigenspaces for
$\phi$ and $\psi$. Then there is a unique simultaneous decomposition. 
\end{lemma}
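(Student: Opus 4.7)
The plan is to exploit the fact that the two commuting endomorphisms generate a commutative finite-dimensional $\Q$-algebra acting on $X$, to which the Chinese Remainder Theorem can be applied.

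First I would consider the ring homomorphism $\Q[T,U]\to\End(X)$ sending $T\mapsto\phi$ and $U\mapsto\psi$; this is well-defined precisely because $\phi\psi=\psi\phi$. By assumption there exist polynomials $f(T)=\prod_{i=1}^n(T-\alpha_i)^{m_i}$ and $g(U)=\prod_{j=1}^r(U-\beta_j)^{k_j}$, with the $\alpha_i$ and the $\beta_j$ pairwise distinct respectively, such that $f(\phi)=0$ and $g(\psi)=0$ on $X$. Hence the action factors through the artinian quotient
\[ A:=\Q[T,U]/(f(T),g(U)). \]
Since $(T-\alpha_i)^{m_i}$ and $(T-\alpha_{i'})^{m_{i'}}$ are coprime for $i\neq i'$, and similarly for $U$, two applications of the Chinese Remainder Theorem give a ring isomorphism
\[ A\;\isom\;\prod_{i=1}^n\prod_{j=1}^r A_{ij},\qquad A_{ij}:=\Q[T,U]/\bigl((T-\alpha_i)^{m_i},(U-\beta_j)^{k_j}\bigr). \]

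Next I would transport the resulting decomposition $1=\sum_{i,j}e_{ij}$ into a decomposition of $X$. The $e_{ij}$ are pairwise orthogonal idempotents in $A$, hence in $\End(X)$, and since $\Ah$ is pseudo-abelian each image $X_{ij}:=e_{ij}X$ exists and we obtain $X=\bigoplus_{i,j}X_{ij}$. On $X_{ij}$ the endomorphism $(\phi-\alpha_i)^{m_i}$ and $(\psi-\beta_j)^{k_j}$ both vanish by construction of $A_{ij}$, so $X_{ij}$ is simultaneously a generalized $\phi$-eigenspace for $\alpha_i$ and a generalized $\psi$-eigenspace for $\beta_j$. Grouping over $j$ (resp.\ $i$) recovers the original $\phi$-decomposition (resp.\ $\psi$-decomposition) by uniqueness of the latter, so the construction is genuinely a common refinement.

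For uniqueness I would argue directly from the idempotent picture: any simultaneous decomposition corresponds to a system of pairwise orthogonal idempotents in the subring generated by $\phi$ and $\psi$ that refine both the $\phi$- and the $\psi$-decomposition. But such a system is uniquely determined by the images of $1$ in each factor of the CRT decomposition of $A$, and we have just exhibited those. The main (but mild) obstacle is simply keeping the bookkeeping clean between the two applications of CRT and verifying that the idempotents $e_{ij}$ constructed in $A$ really do cut out generalized eigenspaces in $\Ah$; since $\Ah$ is pseudo-abelian and $\Q$-linear, no further assumption is needed beyond the existence of images of idempotents.

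\end{appendix}
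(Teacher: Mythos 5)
Your argument is correct and is essentially the paper's own proof: both regard $X$ as a module over $\Q[T,S]$ (your $\Q[T,U]$), factor the action through the Artinian quotient by the two annihilating polynomials, apply the Chinese Remainder Theorem, and obtain the simultaneous decomposition from the resulting system of orthogonal idempotents, with uniqueness coming from the projector description. Your write-up merely spells out a few points the paper leaves implicit (pseudo-abelianness to split off images of idempotents, and the compatibility with the two given decompositions).
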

\begin{proof}The operation of $\phi$ and $\psi$ make $X$ into 
a $\Q[T,S]$-module. By assumption $X$ is annihilated by a polynomial
$P=\prod_{i=1}^n(T-\alpha_i)^{n_i}$ and also by a polynomial
$Q=\prod_{j=1}^m(S-\beta_j)^{m_i}$. Hence the operation factors
via the Artinian ring 
$\Q[T,S]/(P,Q)$. By the Chinese Remainder Theorem, we have a ring
isomorphism
\[ \Q[T,S]/(P,Q)=\prod_{i,j}\Q[T,S]/((T-\alpha_i)^{n_i},(S-\beta_j)^{m_j}).\]
The decomposition of $X$ is induced from the decomposition
of $1$ into projectors.
\end{proof}

\end{appendix}


\begin{thebibliography}{9999999}
\bibitem[Ay07a]{Ayou7a}J. Ayoub, Les six op\'erations de Grothendieck et le formalisme des cycles \'evanescents dans le monde motivique. I, Ast\'erisque,
    314 (2007).

\bibitem[Ay7b]{Ayou7b}J. Ayoub, Les six op\'erations de Grothendieck et le formalisme des cycles \'evanescents dans le monde motivique. II, Ast\'erisque,
    315 (2007).
\bibitem[Ay10]{Ay10}J. Ayoub. Note sur les op\'erations de Grothendieck et la r\'eealisation de
Betti. J. Inst. Math. Jussieu, 9 (2010), 225--263.
\bibitem[Ay14]{ayoubet}J. Ayoub, La r\'ealisation \'etale et les op\'erations de Grothendieck, Ann. Sci. \'Ecole Norm. Sup. 47 (2014), p. 1--141.


\bibitem[BaSchl]{BalmerSchlichting}P. Balmer, M. Schlichting,
Idempotent completion of triangulated categories. 
J. Algebra 236 (2001), no. 2, 819--834.
\bibitem[AEH]{AEH}G. Ancona, S. Enright-Ward, A. Huber,
On the motive of a commutative algebraic group, Preprint 2013,arXiv:1312.4171
\bibitem[AHP]{AHP}G. Ancona, A. Huber, S. Pepin-Lehalleur 
On the relative motive of a commutative group scheme, Preprint 2014,  arXiv:1409.3401 


\bibitem[BeLe91]{BeLe}A. Beilinson, A. Levin, 
The elliptic polylogarithm. Motives (Seattle, WA, 1991), 123--190, in:
Proc. Sympos. Pure Math., 55, Part 2, Amer. Math. Soc., Providence, RI, 1994. 

\bibitem[BKL14]{BKL}A.Beilinson, G. Kings, A. Levin, 
Topological polylogarithms and $p$-adic interpolation of $L$-values of totally real fields, Preprint 2014, 
http://arxiv.org/abs/1410.4741.
\bibitem[BuG03]{BurnsGreither}D. Burns, C. Greither,
On the equivariant Tamagawa number conjecture for Tate motives,
Invent. Math. 153 (2003), no. 2, 303--359. 

\bibitem[CD09]{CD}D.-C. Cisinski, F. D\'eglise, Triangulated caetegories of mixed motives, Preprint 2009, http://arxiv.org/abs/0912.2110.

\bibitem[Del89]{Del}P. Deligne, Le groupe fondamental de la droite projective moins trois points, in: Galois groups over Q (Berkeley, CA, 1987), 79--297,
Math. Sci. Res. Inst. Publ., 16, Springer, New York, 1989.

\bibitem[Di04]{di04}A. Dimca, Sheaves in topology. Universitext. Springer-Verlag, Berlin, 2004.
\bibitem[Dre13a]{drew-thesis}
B. Drew, PhD thesis, 2013.

\bibitem[Dre13b]{drew-oberwolfach}
B. Drew. Hodge realizations of triangulated mixed motives. {\em Oberwolfach reports} 10 (2013) 1884--1887.


\bibitem[Eke90]{Eke}T. Ekedahl, On the adic formalism.
In {\em The {G}rothendieck {F}estschrift, {V}ol.\ {II}}, volume~87 of
  {\em Progr. Math.}, pages 197--218. Birkh\"auser Boston, Boston, MA, 1990.

\bibitem[G06]{Gealy}M. Gealy, On the Tamagawa Number Conjecture for motives attached to modular forms. Dissertation (Ph.D.), California Institute of Technology 2006.

\bibitem[HuKa06]{HuKa}A. Huber, B. Kahn, The slice filtration and mixed Tate motives. Compos. Math. 142 (2006), no. 4, 907--936. 

\bibitem[HuKi99]{HuKiinventiones}A. Huber, G. Kings, 
Degeneration of l-adic Eisenstein classes and of the elliptic polylog.
Invent. Math. 135 (1999), no. 3, 545--594. 

\bibitem[HuKi03]{HKDuke}A. Huber, G. Kings, G
Bloch-Kato conjecture and Main Conjecture of Iwasawa theory for Dirichlet characters. 
Duke Math. J. 119 (2003), no. 3, 393–-464. 

\bibitem[HuWi98]{HuWi}A. Huber, J. Wildeshaus, 
Classical motivic polylogarithm according to Beilinson and Deligne. Doc. Math. 3 (1998), 27--133;  and Correction Doc. Math. 3 (1998), 297--299.

\bibitem[Ill71]{Illusie} L. Illusie, Complexe Cotangent et 
D\'eformations I, Lecture Notes in Mathematics, 239,
Springer-Verlag, Berlin, 1971.

\bibitem[Ivo14]{Ivorra_Hodge}
F. Ivorra. Perverse, hodge and motivic realizations of {\'e}tale motives Preprint 2014, {\em Available on
  http://hal.archives-ouvertes.fr/hal-00978894/PDF/2014\_Ivorra\_HPR10\_HAL\_.pdf}.

\bibitem[Ka04]{Kato}K. Kato, 
$p$-adic Hodge theory and values of zeta functions of modular forms. Cohomologies $p$-adiques et applications arithm\'etiques. III.
Ast\'erisque No. 295 (2004)

\bibitem[Ki99]{Ki}G. Kings, 
K-theory elements for the polylogarithm of abelian schemes.
J. Reine Angew. Math. 517 (1999), 103--116. 
\bibitem[Nag63]{nagata}M. Nagata,
A generalization of the imbedding problem of an abstract variety in a complete variety.
J. Math. Kyoto Univ. 3 (1963) 89--102. 

\bibitem[Ki01]{Ki-Invent}G. Kings, 
The Tamagawa number conjecture for CM elliptic curves.
Invent. Math. 143 (2001), no. 3, 571-–627. 

\bibitem[Ki15]{Ki-Eisenstein}G. Kings, Eisenstein classes, elliptic {S}oul{\'e} elements and the {$\ell$}-adic elliptic polylogarithm, in: The {B}loch--{K}ato conjecture for the {R}iemann zeta function,
London Math. Soc. Lecture Note Ser. 418, Cambridge Univ. Press.

\bibitem[Sai88]{sai88}M. Saito,
Modules de Hodge polarisables. 
Publ. Res. Inst. Math. Sci. 24 (1988), no. 6, 849--995 (1989). 

\bibitem[Sai90]{sai90}M. Saito, 
Mixed Hodge modules.
Publ. Res. Inst. Math. Sci. 26 (1990), no. 2, 221--333. 

\bibitem[SGA 5]{sga5} Cohomologie l-adique et fonctions L. 
S\'eminaire de G\'eometrie Alg\'ebrique du Bois-Marie 1965--1966 (SGA 5). Edit\'e par Luc Illusie. Lecture Notes in Mathematics, Vol. 589. Springer-Verlag, Berlin-New York, 1977.

 \bibitem[Voe00]{voe}V. Voevodsky, 
Triangulated categories of motives over a field. in: Cycles, transfers, and motivic homology theories, 188--238,
Ann. of Math. Stud., 143, Princeton Univ. Press, Princeton, NJ, 2000. 
\bibitem[Wi97]{Wi}J. Wildeshaus, 
Realizations of polylogarithms.
Lecture Notes in Mathematics, 1650. Springer-Verlag, Berlin, 1997.
\bibitem[Wi14]{Wi14}J. Wildeshaus, 
On the interior motive of certain Shimura varieties: the case of Picard surfaces, Preprint 2014.arXiv:1411.5930.


\end{thebibliography}
\end{document}